\documentclass{amsart}

\usepackage{frcursive}
\usepackage{amsmath}
\usepackage{amsthm}
\usepackage{amssymb}
\usepackage{hyperref}
\usepackage{color}
\usepackage{mathrsfs}

\setlength{\oddsidemargin}{.5cm} 
\setlength{\evensidemargin}{.5cm}
\setlength{\textwidth}{15cm} 
\setlength{\textheight}{20cm}
\setlength{\topmargin}{1cm}

\def\Ee{{\mathscr{E}}}

\def\AA{{\mathcal A}}
\def\BB{{\mathcal B}}

\def\DD{{\mathcal D}}

\def\HH{{\mathcal H}}
\def\II{{\mathcal I}}
\def\JJ{{\mathcal J}}

\def\LL{{\mathcal L}}
\def\MM{{\mathcal M}}

\def\QQ{{\mathcal Q}}

\def\WW{{\mathcal W}}
\def\XX{{\mathcal X}}
\def\YY{{\mathcal Y}}
\def\ZZ{{\mathcal Z}}

\def\LLL{{\mathscr{L}}}

\newtheorem{thm}{Theorem}[section]
\newtheorem{lem}[thm]{Lemma}

\newtheorem{cor}[thm]{Corollary}
\newtheorem{defi}[thm]{Definition}
\newtheorem{prop}[thm]{Proposition}
\newtheorem{rk}[thm]{Remark}

\newenvironment{preuve}{\vip\noindent {\it Proof}}{\hfill$\square$\vip}

\newcommand{\beqn}{\begin{equation}}
\newcommand{\eeqn}{\end{equation}}
\newcommand{\bear}{\begin{eqnarray}}
\newcommand{\eear}{\end{eqnarray}}
\newcommand{\bean}{\begin{eqnarray*}}
\newcommand{\eean}{\end{eqnarray*}}

\newcommand{\wto}{\, \rightharpoonup \,}
\newcommand{\rr}{{\mathbb{R}}}
\newcommand{\R}{{\mathbb{R}}}
\newcommand{\T}{{\mathbb{T}}}

\newcommand{\e}{\varepsilon}
\newcommand{\eps}{\varepsilon}
\newcommand{\vip}{\vskip.2cm}
\newcommand{\indiq}{\hbox{\rm 1}{\hskip -2.8 pt}\hbox{\rm I}}
\newcommand{\E}{\mathbb{E}}
\newcommand{\Prob}{\mathbb{P}}
\newcommand{\intot}{\int _0^t }

\newcommand{\intrd}{\int_{\rr^2}}

\newcommand{\cS}{{\mathcal S}}
\newcommand{\cK}{{\mathcal K}}
\newcommand{\cB}{{\mathcal B}}
\newcommand{\cF}{{\mathcal F}}

\newcommand{\supp}{{{\rm Supp} \;}}
\newcommand{\tx}{{{\tilde x}}}
\newcommand{\tg}{{{\tilde \gamma}}}
\newcommand{\tm}{{{\tilde m}}}
\newcommand{\tH}{{{\tilde H}}}
\newcommand{\tHH}{{{\tilde {\mathcal H}}}}
\newcommand{\tI}{{{\tilde I}}}
\newcommand{\tII}{{{\tilde {\mathcal I}}}}
\newcommand{\tJ}{{{\tilde J}}}
\newcommand{\tJJ}{{{\tilde {\mathcal J}}}}

\newcommand{\tMk}{{{\tilde M}_k}}

\newcommand{\Psym}{{\bf P}_{\! sym}}
\newcommand{\PPP}{{\bf P}}
\newcommand{\MMM}{{\bf M}}

\begin{document}

\title{Propagation of chaos for the 2D viscous vortex model.}

\author{Nicolas Fournier, Maxime Hauray, St\'ephane Mischler}

\address{N. Fournier: LAMA UMR 8050, Universit\'e Paris Est,
Facult\'e de Sciences et Technologies,
61, avenue du G\'en\'eral de Gaulle, 94010 Cr\'eteil Cedex, France.}

\email{nicolas.fournier@univ-paris12.fr}

\address{M. Hauray: LATP UMR 7353, Universit\'e d'Aix-Marseille, Centre de
Math\'ematiques et Informatique, 39 rue F. Joliot Curie
13453 Marseille Cedex 13, France.}

\email{hauray@cmi.univ-mrs.fr}

\address{S. Mischler: CEREMADE UMR 7534, Universit\'e Paris-Dauphine,
Place du Mar\'echal de Lattre de Tassigny F-75775, Paris Cedex 16, France.}

\email{mischler@ceremade.dauphine.fr}

\subjclass[2000]{76D05, 65C05}

\keywords{2D Navier-Stokes equation, Stochastic particle systems, Propagation of Chaos, 
Fisher information, Entropy dissipation.}

\thanks{The first and third authors of this work were supported  by a grant of the 
{\it Agence Nationale de la Recherche} numbered ANR-08-BLAN-0220-01.
}

\begin{abstract}
We consider a stochastic system of $N$ particles, usually called vortices in that setting,  approximating 
the 2D Navier-Stokes equation
written in vorticity. Assuming that the initial distribution of the position and
circulation of the vortices has 
finite (partial) entropy and a finite 
moment of positive order, we show that the empirical measure of the particle system
converges in law  to the unique (under suitable a priori estimates) solution of the 2D Navier-Stokes equation.
We actually prove a slightly stronger result : the propagation of chaos
of the stochastic paths towards the solution of the expected nonlinear 
stochastic differential equation.  Moreover, the convergence holds in a strong sense, usually called  
entropic (there is no loss of entropy in the limit). The result holds without
restriction (but positivity) on the viscosity parameter.

The main difficulty is the presence of the singular 
Biot-Savart kernel in the equation. 
To overcome this problem, we use the
dissipation of entropy which provides some (uniform in $N$)  bound on the Fisher
information of the particle system, and then use extensively that bound together
with classical and new properties of the Fisher information.
\end{abstract}

\maketitle


\section{Introduction}
\setcounter{equation}{0}
\label{sec:intro}

The subject of this paper is the convergence of a stochastic 
vortices system to the $2D$ Navier-Stokes equation
written in vorticity without restriction (but positivity) on the viscosity parameter. 

\vip

\paragraph{\bf The particle system.}
We consider a system of $N$ vortices labeled by an index $1 \le i \le N$,
each one being fully described
by its position $\XX^N_i \in \R^2$ and its circulation $ \frac1N \MM^N_i \in
\R$ which measures the ``strength'' of the vortices. We use what is called a 
{\it mean-field} 
scaling : the factor $\frac1N$ is there in order to keep the global circulation
(or also total vorticity) bounded. The case $\MM^N_i>0$ corresponds to a vortex
which turns round in the direct (trigonometric) sense while the 
case $\MM^N_i < 0$ corresponds  to a vortex which turns in the reverse sense. 
We assume that the system evolves stochastically according to the
following system of $\rr^2$-valued S.D.E.s 
on the vortices positions
\begin{align}\label{ps}
\forall i=1,\ldots,N,\quad \XX_i^N(t) = \XX^N_i(0)+  \frac1N \sum_{j \neq i} \MM^N_j\intot K(\XX_i^N(s) -
\XX_j^N(s))ds +  \sigma \BB_i(t),
\end{align}
where $((\BB_i(t))_{t\geq 0})_{i=1,\dots,N}$ stands for an independent family of $2D$ standard Brownian motions,
$\sigma > 0$ is a parameter linked to the viscosity and $K : \R^2 \to \R^2$ is
the Biot-Savart kernel defined by
$$
\forall \, x = (x_1,x_2) \in \R^2, \quad K(x)=\frac{x^\perp}{|x|^2} =  
\Bigl( -\frac{x_2}{|x|^2}, \frac{x_1}{|x|^2} \Bigr)
= \nabla^\perp \log |x|,
$$
It is worth emphasizing that we assume here that any vortex keeps its initial
circulation, so that 
the $\MM^N_i$  are time-independent and act like fixed parameters in \eqref{ps}.  
Each vortex's position
moves randomly according to a Brownian noise as well as deterministically according to a vector field
generated by all the other vortices
through the Biot-Savart kernel.
 The singularity of $K$ makes difficult the study of the particle system
(\ref{ps}). However, Osada \cite{o2} and others  have
shown that the particles a.s. never encounter,
so that the singularity of $K$ is a.s. never visited and (\ref{ps}) is
well-posed. See Theorem \ref{wpps} and  Section~\ref{sec:WP} below for more
details.

\vip

\paragraph{\bf The vorticity equation.}
As is now well-known, the dynamics of such models is linked to the
$2D$ Navier-Stokes equation written in 
vorticity formulation which will be called later the \emph{vorticity
equation}
with a viscosity $\nu = \sigma^2/2$
\begin{align} \label{NS2D}
\partial_t w_t(x) = (K \star w_t)(x)\cdot \nabla_x w_t(x) + \nu \Delta_x
w_t(x), 
\end{align}
where $w : \R_+ \times \R^2 \to \R $ is the vorticity function and the initial vorticity
$w_0 : \R^2 \to \R$ is given.
It is worth emphasizing again that we do not assume here that $w$ is non-negative and this is of 
course related to the fact that 
the circulations in the $N$-vortex system may be positive and negative.
There is a huge literature on that model. Our analysis will be based on the well-posedness 
of \eqref{NS2D} in a $L^1$-framework
as developed in Ben-Artzi in \cite{BenArtzi} and slightly improved in \cite{Brezis}. 
We also refer to Gallagher-Gallay \cite{gg} and the references 
therein for more recent well-posedness results.

\vip

\paragraph{ \bf Origin of the model.}
The deterministic $N$-particle system (with $\sigma =0$) was originally introduced by Helmholtz in 1858 
\cite{Helmholtz} 
and later studied by Kirchhoff \cite{Kirchhoff} and many others. It is sometimes quoted as the 
Helmholtz-Kirchhoff (HK in short) system.  In the non-viscous case $\sigma=0$,
the
empirical measure associated to the finite 
particle system (\ref{ps}) solves {\it exactly} the non viscous vorticity
equation (i.e. \eqref{NS2D} with 
$\sigma=0$) if the self interaction is neglected. This is not the case when
$\sigma>0$, where the Navier-Stokes equation is expected
to be solved only in the limit $N\to\infty$. Thus for a fixed $N$, addition of
noise on the position of the vortices as in \eqref{ps} is not the most relevant
idea. Physicists prefer to 
introduce damped 
vortices, known as Oseen vortices. 
The dynamics of Oseen vortices is still driven by the a variant of the deterministic HK system, where $K$
now varies with time, in order to take into account the dissipation. Both
systems (vortices driven by (\ref{ps}) or Oseen vortices) are 
interesting because they approximate the dynamics of real vortices, that appear,
for instance, in geostrophic or atmospheric flows, and are remarkably stable.
See the works of Marchioro  \cite{Marchioro} and Gallay \cite{Gallay} for a
justification of the approximation, and the one of Gallay-Wayne 
\cite{GallayWayne} for a precise mathematical result on the stability of Oseen vortices.

\vip

\paragraph{ \bf Interest of the limit in large number of vortices.}
Later, Onsager  \cite{Onsager} was the first to see the interest of the statistical properties 
of the $N$ vortices system to distinguish which among the numerous stationary solutions of the vorticity
equation are physically relevant.
His heuristic ideas where made more rigorous by Caglioti, Lions, 
Marchioro and Pulvirenti in \cite{CLMP1}. After that, the question of the convergence of
the HK model towards Euler and Navier-Stokes equations was studied by several authors. 
In the deterministic case Schochet  \cite{Schochet} proved the convergence towards solutions 
of the Euler equation. But since such solutions seem to be numerous under weak
a priori conditions, his results does not implies propagation of chaos.

As mentioned before, the stochastic model \eqref{ps} becomes much more relevant
when $N$ is large.
However, in that case the use of independent noise on each vortex is not 
motivated by the underlying physics. Since vortices are not real {\it particles} but rather small structures 
appearing in fluid models, a noise acting on them should depend on their relative positions: the noise
of two close vortices should be quite correlated. 
We refer to the work of Flandoli, Gubinelli and Priola 
\cite{FGP} for such more realistic models with a fixed number of vortices. But that kind of 
noise is much more difficult to handle in the limit of large number of vortices.
Thus in the sequel, we will use independent Brownian motion, despite this 
shortcoming.

A second interest of the stochastic vortices system is that it may be
seen as a companion model for stochastic particle systems
with positions and velocities, interacting
trough a two-body force and with velocities excited by independent brownian
motions. The case of a smooth interaction force has been extensively studied since the
work of McKean \cite{McK3}, but there is only few references in the  case of singular
interactions. Rather strangely, the deterministic case is better known since there
exist some convergence results for not too singular interaction without cut-off
\cite{HaurayJabin}. However, we do not know any result valid for singular
interaction
in the stochastic case. In fact such models seem tougher than the vortices
one, since the diffusion does not act on the full position-velocity variables.

\vip

\paragraph{\bf Main result.}
In order to connect the sequence of solutions   $\ZZ^N_t = (\MM_1, \XX^N_1(t), ... ,$ $ \MM_N,  \XX^N_N(t))$, 
to the random particle system \eqref{ps} with a solution to the vorticity
equation \eqref{NS2D}, we introduce the  vorticity  empirical measure 
$$
\WW^N_t (dx) := \frac 1N  \sum_{i=1}^N \MM_i^N \, \delta_{\XX^N_i(t)} (dx)
$$
which a.s. takes values in the space of bounded measures on $\R^2$, as well as the typical vorticity defined
from the law of $(\MM_1^N,\XX^N_1(t))$ in the following way 
$$
w^N_1(t,x) := \int_\R m \, \LLL (\MM_1^N,\XX^N_1(t)) (dm,x).
$$	
Then, under suitable (chaos)  hypothesis on the initial conditions $\ZZ^N_0$
we shall show that for any positive time 
\bear \label{eq:cvgce1}
&& \qquad \WW^N_t    \,\, \Rightarrow \,\, w_t \quad\hbox{in law as } \,\, N \to \infty  , 
\\ \label{eq:cvgce2}
&& \qquad w^N_1(t) \to w_t  \quad\hbox{strongly in} \,\, L^1(\R^2) \,\, \hbox{as} \,\, N \to \infty  ,
\eear
where $w$ is the unique solution to the vorticity
equation \eqref{NS2D} with appropriate initial datum $w_0$.

\vip 

In the other way round and in particular,   for any initial
(Lebesgue measurable) vorticity function $w_0 : \R^2 \to \R$ satisfying 
\beqn\label{condic}
\int_{\R^2}  |w_0 |\, ( 1 + | x |^k + |\log |w_0||) \, dx < \infty,
\quad
\text{for some} \quad k \in (0,2), 
\eeqn
we can build a sequence of initial conditions $\ZZ^N_0$ and then define the  family of solutions 
$\ZZ^N_t $ to the $N$-particle
vortex system \eqref{ps} so that the vorticity empirical measure $\WW^N_t$ and the typical vorticity 
$w^N_1(t)$  
converge to the solution $w_t$ as stated in  \eqref{eq:cvgce1} and  \eqref{eq:cvgce2}, where 
$w$ is the unique solution to the vorticity
equation \eqref{NS2D} with  initial datum $w_0$. 

The construction of the initial conditions is not very elaborated, but requires some notation that 
will be introduced in Section~\ref{sec:MainResult}.
Let us just mention that in the case of a non-negative initial vorticity, we can assume up to some 
scaling that $\omega_0$ is a probability, and then the choice 
$\MM_i^N =1$ for any $1 \le i \le N$ and $(\XX_i^N(0))_{1 \le i \le N}$ i.i.d. with law $\omega_0$ will do the job. 

\vip
\paragraph{\bf Chaos and limit trajectories.}

The solution $w$ of the vorticity equation is thus obtained as the limit in a kind of law of large numbers 
from the $N$-vortex system. 
However, the picture is not that simple. 

It is indeed rather reasonable to assume that the initial positions and circulations of
the vortices are (at least asymptotically) independent.
Then, as time passes, vortices interact and that create correlations so that vortices are never any longer 
independent. We may expect that these 
correlations vanish asymptotically because the interactions between pairs of vortices in \eqref{ps} tends 
to zero. 
For a smooth (say Lipschitz) interaction force field $K$, such a result is well-known since the pioneer 
work by McKean~\cite{McK3},
and it is related to the notions of {\it ``chaos"} and {\it ``propagation of chaos"} as  introduced by 
Kac in \cite{Kac1956}.  
Here, the Biot-Savard force field kernel is singular which leads to additional mathematical difficulties. 
Nevertheless, we are able to handle them and we  prove that still for the Biot-Savard kernel correlations 
vanish asymptotically. 

We establish this asymptotic independence and the convergence \eqref{eq:cvgce1}  
as a consequence of a stronger  {\it "trajectorial chaos"} that  we  briefly describe 
now. The method we follow 
 is closely related to the strategy introduced by Sznitman in \cite{S1} which  consists in showing that 
the sequence of empirical trajectories
$\mu^N_{\ZZ^N}$ converges to some stochastic process which is a solution to a nonlinear martingale problem.

\vip
Let us first notice that if we accept that correlations asymptotically disappear,  then the trajectories 
(and circulations)  $(\MM_i^N,(\XX_i^N(t))_{t\ge 0})_{i \le N}$ of the vortices must behave asymptotically 
like $N$ independent copies of the same process, 
$(\MM,(\XX(t))_{t\geq 0}$,  solution to the nonlinear stochastic differential equation
\beqn\label{NSDE}
\XX(t)=\XX(0)+  \intot \intrd K(\XX(s)-x)w_s(dx)ds + \sigma \BB_t,
\eeqn
where $w_t(dx)=  \int_{\rr\times B} m \, g_t(dm,dx)$ with $g_t = \LL(\MM,\XX(t))$.
It is important to stress that  $w_t$ solves necessarily the
vorticity equation (\ref{NS2D}) if $(\MM,(\XX(t))_{t\geq 0}$ is a solution to \eqref{NSDE}.

\vip
As a matter of fact, we will prove that under appropriate hypothesis on the initial law $\LL(\ZZ^N_0)$ 
(which includes chaos type assumption and bound on the entropy and on some
moment), the $N$-vorticity system enjoys a chaos property at the  level of the trajectories, namely,  
\beqn\label{eq:cvgce3}
\mu^N_{\ZZ^N} := \frac 1 N \sum_{i=1}^N \delta_{ (\MM_i^N,(\XX_i^N(t))_{t\ge 0}) } 
 \,\, \Rightarrow \,\,g  \quad\hbox{in law as } \,\, N \to \infty  , 
\eeqn
where $g$ is the law of the nonlinear process $(\MM,(\XX(t))_{t\geq 0}$ defined in \eqref{NSDE}. 
This convergence at the level of trajectories implies~\eqref{eq:cvgce1}.
Moreover, using a trick introduced in \cite{MMKacProg} which consists in carefully  
estimating  what happens for the dissipation
of the entropy, we deduce~\eqref{eq:cvgce2}.

\vip 
\paragraph{\bf An overview of the proof.}
Let us briefly describe our method, which relies on compactness/consistency/uniqueness
as in Sznitman in \cite{S1} who was studying the homogeneous Boltzmann equation. 
As already mentioned, the main difficulty comes from the fact 
that the kernel $K$ is singular so that the drift in 
\eqref{ps} may be very large when two particles are very close.
Using standard dissipation of entropy estimates, we obtain uniform bounds on the Fisher information of the 
time marginal of the law (of the positions) of the $N$ vortices. These uniform bounds provide enough 
regularity  to 

\begin{itemize}
\item[(1)] prove that {\it close encounters} of particles are rare, from which we deduce the 
tightness of the law of the trajectories of the $N$ vortices system (compactness),
\item[(2)] prove that the possible limit are made of solutions of the nonlinear
SDE (consistency), which satisfies some appropriate additional a priori bounds,
\item[(3)] prove the uniqueness of the above limit stochastic process.
\end{itemize} 

We may also remark that for the two first points (tightness and consistency), 
the singularity of the kernel in $1/|x|$ is not the critical one. Everything
would work for divergence free kernels with singularity behaving like $1/|x|^{\alpha}$,
with $\alpha \in (0,2)$. But, it is critical for the question of
uniqueness of the limit stochastic process and the Navier-Stokes equation.

\vip

We also emphasize the following important point. In order to get enough
{\it a priori} bounds on the possible limit, we use a new
result: the fact that the Fisher information (properly rescaled) can only
decrease when we perform the many-particle limit. This is
a consequence of the fact that the (so called) level $3$ Fisher information is linear
on mixed states. That property, known from the work of Robinson and Ruelle for
the entropy \cite{RR} was proved for the Fisher information by the two last
authors in \cite{hm}, with the help of the first author.
The precise result is properly stated and explained in Section~\ref{sec:prel2}.

\vip

We can also remark that our method is interesting only in low dimension
(of space). The extensive use of the Fisher information is very
interesting in this case, since it provides rather strong regularity. But,
if we increase the dimension, the regularity obtained from the Fisher
information gets weaker and weaker and we will not be able to treat
interesting singularities. As a matter of fact, it can be checked that our
method is valid for a divergence free kernel, with a singularity at most like
$1/|x|$ (included) near $0$ in any dimension $d$. Again, the limitation will
come from the uniqueness part, the tightness and  consistency will also
hold for singularity up to $1/|x|^2$, not included.

\vip

\paragraph{ \bf Already known results.}

If we replace the singular kernel $K$ by a regularized one $K_\e$,
the result of propagation of chaos is well-known. The more standard strategy is
due to McKean
\cite{McK3} and applies when the interaction is Lipschitz. It relies on a coupling
argument between the solution of the $N$-vortices stochastic system and $N$
independent copies of the solution of the nonlinear SDE. But since the result
gives a quantitative estimate of convergence, an optimization may lead to a
similar result valid for a regularization parameter $\e$ going to $0$ with $N$.
That approach or some variants was performed by Marchioro and Pulvirenti in
\cite{MarPul2}, for bounded initial vorticity. For that regularity on the
initial condition, there is a good well-posedness theory even in the
non-viscous case, so that their method also applies if $\nu=0$. The drawback is
that the speed of convergence of the
regularization parameter is very slow : $\e(N) \sim \log(N)^{-1}$.
See also M\'el\'eard \cite{m2} for a similar result for more general initial
data.

It is worth emphasizing that the convergences \eqref{eq:cvgce1},  \eqref{eq:cvgce2},  \eqref{eq:cvgce3}
are proved without any rate. This is a consequence of the compactness method we use. 
In particular, we were not able to implement the coupling method popularized by Sznitman~\cite{SSF} 
and revisited by Malrieu \cite{Malrieu}, see also \cite{BGG} and the references therein for recent 
developments, 
nor the quantitative Grunbaum's duality method elaborated in \cite{MMW}.

\vip

In a series of papers, Osada proves the convergence of the particle system (\ref{ps}) to the vorticity
equation (\ref{NS2D}): the case of a large viscosity is studied in \cite{o0}
and the case of any positive viscosity is discussed in \cite{Osada}. In this last paper,
the pathwise convergence is not obtained (while it is checked in \cite{o0} when $\sigma$ is large enough).
His strategy relies strongly on a deep result obtained by himself in \cite{oDiff}: 
estimates {\it \`a la Nash} for convection-diffusion equation, with 
divergence free and very singular drift. This last result is also a
key argument in most works about existence and uniqueness for the $2D$ Navier-Stokes equation, 
with the exception of the work of Ben-Artzi \cite{BenArtzi} that we use here.

\vip

Let us finally mention the result of Cepa and Lepingle
\cite{CepaLepingle} about
Coulomb gas models in dimension one. Their models are very similar to ours,
but their singularity is repulsive and strong since it behaves like
$1/|x|$ (far above the singularity of the Coulomb law since we are in
dimension one). However, their technics are limited to dimension one.

\vip

The present paper improves on preceding results in several directions. 
It does not require that the viscosity coefficient is large as in Osada
\cite{o1}, nor 
to cutoff the interaction kernel in the particle system as in \cite{MarPul2} or
\cite{m2}. Moreover, 
in the two above mentioned previous works of Osada, the convergence \eqref{eq:cvgce2} was 
only established in the weak sense (of measures) and only for non-negative
vorticity. Moreover, the results of Osada basically apply when $w_0 \in
L^\infty$, 
while we allow any $w_0 \in L^1(\rr^2)$ with a finite entropy and moment of
positive order.
Last but not least,  our proof seems simpler than  the one  of Osada in \cite{o1}, which uses very technical estimates.

\vip

\paragraph{ \bf The case of bounded domains.}
In the case of general bounded domains $\Omega$ with boundaries, the problem
is more delicate. The first difficulty is that the vorticity formulation 
of the Navier-Stokes equation does not behave well with the boundaries
conditions. In fact, vorticity is created at the boundary. However, 
it is still possible to imagine branching processes of interacting particles
that will take the possible creation and annihilation of vortices at the
boundary, as is done by Benachour, Roynette and Vallois in \cite{BeRoVa}, but
the analysis of such systems seems much more difficult. 

\vip

However, if we move to some periodic and bounded setting, $\Omega = \T^2$, then
our results will apply with small modifications. All we have to do is to
replace the Biot-Savard $K$ by its periodization
$$
K_{per}(x)  := \frac{x^\perp}{|x|^2} + g_\infty(x)
$$
where $g_\infty$ is some $C^\infty$ function. The singularity is exactly the
same, and the addition of a smooth function $g_\infty$ in the kernel does not
  raise any difficulty. As a consequence,  our result will apply to that case with the
appropriate modifications.

\section{Statement of the main results}
\setcounter{equation}{0}
\label{sec:MainResult}

\subsection{Notation}

For any Polish space $E$, we denote by $\PPP(E)$ the set of probability measures on $E$
and by $\MMM(E)$ the set of finite signed measures on $E$.
Both are endowed  with the topology of weak convergence defined by duality against functions of
$C_b(E)$.
For $N\geq 2$, we denote by $\Psym(E^N)$ the set of symmetric probability measures $F$ on $E^N$ 
(i.e. such that $F$ is the law of an exchangeable $E^N$-valued random variable $(\YY_1,\dots,\YY_N)$).

\vip

In the whole paper, when $f\in\MMM(\rr^d)$ has a density, we also denote by $f\in L^1(\rr^d)$
its density.

\vip

For $x\in \rr^2$, we introduce $\langle x \rangle:=(1+|x|^2)^{1/2}$.
For $k \in (0,1]$ and $N \ge 1$ we set  
$$
\forall \, X = (x_1, ..., x_N) \in (\R^2)^{N}, \quad
\langle X \rangle^k := \frac 1 N \sum_{i=1}^N \langle x_i \rangle^k.
$$
For $F \in \PPP ((\R^2)^{N})$, we define 
$$
M_k (F) := \int_{(\R^2)^N} \langle X \rangle^k \, F(dX).
$$
We also introduce 
$$
\PPP_k ((\R^2)^N) := \{F \in \PPP((\R^2)^N) \; : \;  M_k(F) < \infty \}.
$$
For $F \in \PPP((\R^2)^N)$ with a density (and a finite moment of positive
order for the entropy), we introduce the Boltzmann entropy and the Fisher
information of $F$ defined as
$$
H(F):= \frac1N\int_{(\rr^2)^N} F(X)\log(F(X)) \, dX \quad \hbox{and}\quad
I(F):= \frac1N\int_{(\rr^2)^N} \frac{|\nabla F(X)|^2}{F(X)} \, dX. 
$$
If $F \in \PPP((\R^2)^N)$ has no density, we simply put $H(F) = +\infty$ and $I(F) = +\infty$. 
The somewhat unusual normalization by $1/N$ is made in order that for any $f \in \PPP(\rr^2)$,
$$
H(f^{\otimes N}) = H(f)  \quad \hbox{and}\quad
I(f^{\otimes N}) = I(f).
$$

We will often deal here with probability measures on $(\R \times \R^2)^N$, 
representing the circulations and positions of $N$ vortices. But the
circulations 
only act like parameters. We thus adapt all the previous notation by a simple integration.
For $G \in \PPP ((\R \times \R^2)^N)$, write the disintegration 
$G(dM,dX)=R(dM)F^M(dX)$, where $R \in\PPP(\rr^N)$ and for each $M\in \rr^N$,
$F^M \in \PPP((\rr^2)^N)$ and define partial moment, entropy and Fisher
information by
\begin{eqnarray}
\tMk(G) &:= &\int_{\rr^N} M_k(F^M) \, R(dM) = \int_{(\R \times \R^2)^N }
\langle X \rangle^k \, G(dM,dX) \label{dftmk}\\
 \tH(G) &:= & \int_{\rr^N} H(F^M)
\, R(dM)  \label{dfth}\\
\tI(F) &:= & \int_{\rr^N} I(F^M) \, R(dM). \label{dfti}
\end{eqnarray}
To understand these objects, let us make a few observations. 
When $G$ has a density on $(\rr\times\rr^2)^N$,
$$\tI(F)=  N^{-1} \int_{(\R \times \R^2)^N } \frac{|\nabla_{_X}
G(M,X)|^2}{G(M,X)} dMdX.$$
When $G$ has a finite (classical) entropy, we can write
$$\tH(G)= \int_{(\R \times \R^2)^N} G(M,X)\log G(M,X) dMdX  - \int_{\R^N}
R(M)\log R(M) dM
= H(G) - H(R).$$

\vip

We finally introduce 
$$
\PPP_k ((\R \times \R^2)^N) := \{G \in \PPP((\R \times \R^2)^N); \,\, \tilde M_k(G) < \infty \}  .
$$

\subsection{Notions of chaos}

In this subsection, $E$ will stand for an abstract polish space. 

\begin{defi}[Chaos for probability measures]
A sequence $(F^N)$ of symmetric probability measures on $E^N$ is said to be  
$f$-chaotic, for a probability measure $f$ on $E$ if one of three
following equivalent conditions is satisfied:

(i) the sequence of second marginals $F^N_2 \wto f \otimes f$ as $N \to + \infty$;

(ii) for all $j \ge 1$, the sequence of $j$-th marginals $F^N_j \wto f^{\otimes j}$ as $N \to + \infty$;

(iii) the law $\hat F^N$ of the empirical measure (under $F^N$) converges towards $\delta_f$ in 
$\PPP(\PPP(E))$ as $N\to\infty$. 
\end{defi}

This definition translates into an equivalent definition in terms of random variables.

\begin{defi}[Chaos for random variables]
A sequence $(\YY^N_1,\ldots,\YY^N_N)$ of exchangeable $E$-valued random variables
is said to be $\YY$-chaotic for some $E$-valued random variable $\YY$ if the
sequence of laws $\LL(\YY^N_1,\ldots,\YY^N_N)$ is $\LL(\YY)$-chaotic, in other words,
if
one of three following equivalent condition is satisfied:

(i) $(\YY^N_1,\YY^N_2)$ goes in law to $2$ independent copies of $\YY$
as $N\to\infty$;

(ii) for all $j \ge 1$, $(\YY^N_1, \dots, \YY^N_j)$ goes in law to $j$ independent copies of $\YY$
as $N\to\infty$;

(iii) the empirical measures $\mu^N_{\YY^N} = \frac1N \sum_1^N \delta_{\YY_i^N}
\in \PPP(E)$ go in law to the constant
$\LL(\YY)$ as $N\to\infty$.
\end{defi}

We refer for instance to the lecture of Sznitman \cite{SSF} for the equivalence of the three conditions,
as well as \cite[Theorem 1.2]{hm}Ê
where that equivalence is established in a {\it quantitative way}. 
Let us only mention that exchangeability is very important in order to understand point (i).

\vip

Propagation of chaos in the sense of Sznitman holds for a system $N$ exchangeable particles 
evolving in time (for instance the system \eqref{ps}) if when the initial conditions 
$(\YY^{N}_1(0),\ldots,\YY^{N}_N(0))$ are $\YY(0)$-chaotic, the trajectories 
$((\YY^N_1(t))_{t\geq 0},\ldots,(\YY^N_N(t))_{t\geq 0})$ are $(\YY(t))_{t\ge 0}$-chaotic, where 
$(\YY(t))_{t\ge 0}$ is the 
(unique) solution of the expected (one-particle) limit model (here the nonlinear SDE (\ref{NSDE})). 

\vip 

Another (stronger) sense of chaos has been developped: the entropic 
chaos. 
It goes back to a celebrated work of Kac \cite{Kac1956} and was formalized recently in \cite{CCLLV,hm}
(see also \cite{hm} for a notion of Fisher information chaos). 

\begin{defi}[Entropic chaos] \label{def:EntChaos}
A sequence $(F^N)$ of symmetric probability measures on $E^N$ is said to be  
entropically $f$-chaotic, for a probability measure $f$ on $E$, if
$$
F^N_1 \to f \quad \hbox{weakly in } \PPP(E) \quad \hbox{and} \quad H(F^N) \to H(f)
$$
as $N\to \infty$, where $F^N_1$ stands for the first marginal of $F^N$. 
\end{defi}
It is shown in \cite{hm} that this is in fact a stronger notion than propagation of chaos. 
Actually, it is known that the entropy can only decrease if a sequence $F^N$ is $f$ chaotic: 
we  say that the entropy is $\Gamma$-lower semi continuous.
With our normalization, it writes
$$
H(f)  \le \liminf_{N \to \infty} H(F^N).
$$
Since the entropy is convex, $\lim H(F^N) =H(f)$ is a stronger notion of convergence, 
which implies that 
for all $j\geq 1$, the density of the law of $(\YY^N_1, \dots, \YY^N_j)$ goes to $f^{\otimes j}$
strongly in $L^1$. 

Here, we will have to modify slightly this notion, replacing the use of $H$ by that of $\tH$,
since the circulations
of the vortices only act like parameters.

\subsection{The Navier-Stokes equation}

\begin{defi}\label{def:weakNS}
We say that $w = (w_t)_{t\geq 0} \in  C([0,\infty),\MMM(\rr^2))$ 
is a weak solution to (\ref{NS2D}) if 
\begin{equation}\label{cfas}
\forall \; T>0,\quad \int_0^T \intrd\intrd |K(x-y)| \,  |w_s|(dx) \,  |w_s|(dx) \, ds<\infty
\end{equation}
and if for all $\varphi \in C^2_b(\rr^2)$, all $t \geq 0$,
\begin{align}\label{ws}
\intrd \varphi(x)w_t(dx)=& \intrd \varphi(x)w_0(dx)  
+ \intot \intrd \intrd K(x-y) \cdot \nabla \varphi(x) \, w_s(dy) \,  w_s(dx) \, ds\\
&+  \nu
\intot\intrd \Delta\varphi(x)w_s(dx) \, ds. \nonumber
\end{align}
\end{defi}

We will establish the following extension of \cite{BenArtzi,Brezis} 
which is well adapted to our purpose. 

\begin{thm}\label{th:wp}
Assume that $w_0 \in L^1(\R^2)$ satisfies (\ref{condic}). There exists a unique weak solution $w$
to (\ref{NS2D}) such that   
\begin{align}
\label{cfish2}
&\nabla_x w \in L^{2q/(3q-2)}(0,T, L^q(\rr^2))\quad  \forall \; q\in [1,2),\quad
\forall \; T>0.
\end{align}
This solution furthermore satisfies
\beqn\label{bdd:dbK3}
w  \in C([0,\infty) ; L^1(\rr^2)) \cap C((0,\infty) ; L^\infty(\rr^2))
\eeqn
and
\beqn\label{eq:betaw}
\partial_t \beta(w) =  (K* w) \cdot \nabla_x \beta(w) +  \nu \Delta
\beta(w) -  \nu \beta''(w) \, |\nabla w|^2 \quad\hbox{on}\quad [0,\infty)
\times \R^2 
\eeqn
in the distributional sense, 
for any $\beta \in C^1(\rr) \cap W^{2,\infty}_{loc}(\R)$ such that $\beta''$ is piecewise continuous and  
vanishes outside of a compact set.
\end{thm}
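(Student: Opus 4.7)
The plan is to refine the $L^1$ well-posedness theory of \cite{BenArtzi, Brezis} by adding an entropy/Fisher-information argument that yields the extra gradient integrability \eqref{cfish2}. I would approximate $w_0$ by smooth compactly-supported $w_0^n$ converging to $w_0$ in $L^1$, and such that the partial entropy $\int |w_0^n|\log|w_0^n|\,dx$ and the $k$-th moment $\int |w_0^n|\langle x\rangle^k\, dx$ also converge to their limits. For each such regularized datum \eqref{NS2D} admits a unique classical global solution $w^n$, and the task becomes to derive uniform-in-$n$ a priori estimates strong enough both to pass to the limit and to recover all the conclusions of the theorem.

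The decisive a priori estimate is the entropy--Fisher dissipation identity obtained by using the renormalization $\beta(u)=|u|\log|u|$: since $K\ast w^n$ is divergence free, the Biot--Savart transport drops out after integration, leaving
$$\int |w_t^n|\log|w_t^n|\,dx + \nu\int_0^t\!\!\int \frac{|\nabla w_s^n|^2}{|w_s^n|}\,dx\,ds = \int |w_0^n|\log|w_0^n|\,dx.$$
The negative part of the entropy is dominated by the $k$-th moment (a standard Jensen-type comparison against a polynomial reference measure), and the growth of that moment on bounded time intervals is controlled by a direct computation using the $L^\infty$-smoothing $\|w_t^n\|_{L^\infty}\leq C(\nu t)^{-1}\|w_0\|_{L^1}$ already built into the Ben-Artzi framework. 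This yields $\int_0^T I(w^n_s)\,ds\leq C(T)$ uniformly in $n$. Interpolating the $L^1$ and $L^\infty$ bounds controls $\|w_t^n\|_{L^{q/(2-q)}}$ pointwise in $t$, and the Hölder inequality
$$\|\nabla w_t^n\|_{L^q}^2 \leq I(w_t^n)\, \|w_t^n\|_{L^{q/(2-q)}}$$
combined with a time-integration in which the singular time factor is balanced against the integrability of $I(w^n)$ delivers the borderline bound \eqref{cfish2} with exponent $2q/(3q-2)$.

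Passage to the limit then combines strong $L^1_{\rm loc}$ convergence (via Aubin-Lions applied to $w^n$ and $\partial_t w^n$ read from the equation), the lower semi-continuity of the Fisher information, and the uniform moment bound to prevent mass from escaping to infinity. The limit $w$ satisfies \eqref{NS2D} weakly together with \eqref{bdd:dbK3} and \eqref{cfish2}; uniqueness in this class follows directly from the Yudovich-type argument of \cite{BenArtzi,Brezis}, since $\|w_t\|_{L^\infty}<\infty$ for $t>0$ makes the drift $K\ast w$ bounded. Finally, the renormalized equation \eqref{eq:betaw} is obtained by applying the chain rule to the smooth approximations $w^n$ and passing to the limit term by term, using \eqref{cfish2} to justify convergence of $\beta''(w^n)|\nabla w^n|^2$ on the set $\{|w|>\delta\}$ before letting $\delta\to 0$. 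The main technical obstacle, in my view, is this last step: one must rule out concentration of $|\nabla w^n|^2$ on $\{w=0\}$, for which the compact-support requirement on $\beta''$ is essential, complemented by the moment bound to confine the behavior at infinity.
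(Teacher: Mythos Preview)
Your existence strategy is genuinely different from the paper's. You propose a self-contained PDE argument (smooth the data, use classical Ben-Artzi solutions, derive entropy/Fisher estimates, pass to the limit), whereas the paper obtains existence as a by-product of the particle-system analysis: Proposition~\ref{consist} produces a solution to the nonlinear SDE \eqref{NSDE}, and its vorticity satisfies \eqref{cfish2} automatically through Lemma~\ref{lem:FishInteg2}. The paper's route is economical since the particle estimates are needed anyway; yours would make Theorem~\ref{th:wp} independent of the stochastic machinery.

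That said, your derivation of the exponent $2q/(3q-2)$ has a gap. From $\|\nabla w_t\|_{L^q}^2\le I(w_t)\,\|w_t\|_{L^{q/(2-q)}}$ and the smoothing $\|w_t\|_{L^\infty}\le C(\nu t)^{-1}$, interpolation gives $\|w_t\|_{L^{q/(2-q)}}\le C\,t^{-(2q-2)/q}$, hence $\|\nabla w_t\|_{L^q}^{2q/(3q-2)}\le C\,I(w_t)^{q/(3q-2)}t^{-(2q-2)/(3q-2)}$. The H\"older pairing in time then forces the non-integrable factor $\int_0^T t^{-1}\,dt$: the ``balancing'' you invoke fails exactly at the endpoint. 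The remedy is to bound $\|w_t\|_{L^{q/(2-q)}}$ by the Fisher information as well (Gagliardo--Nirenberg, i.e.\ Lemma~\ref{lem:FishInteg1}), obtaining $\|\nabla w_t\|_{L^q}\le C\,I(w_t)^{3/2-1/q}$ directly; since $(3/2-1/q)\cdot 2q/(3q-2)=1$, this closes with $\int_0^T I(w_t)\,dt$ alone and no time singularity.

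There is also a gap in the uniqueness step. To invoke \cite{BenArtzi,Brezis} you must show that \emph{every} weak solution in the class \eqref{cfish2} lies in $C([0,\infty);L^1)\cap C((0,\infty);L^\infty)$, not only the one you construct by approximation. The paper devotes Steps~2--3 of its proof to this: a DiPerna--Lions commutator argument gives $C([0,\infty);L^1)$ (here the pairing $|\nabla_x\bar K|\,|w|\in L^1_{t,x}$, guaranteed precisely by \eqref{cfish2} via Hardy--Littlewood--Sobolev, is critical and would fail for a more singular kernel), and then a bootstrap through the decay of $L^p$ norms and maximal heat regularity reaches $C((0,\infty);L^\infty)$. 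Without this, a second weak solution satisfying only \eqref{cfish2} is not a priori in the Ben-Artzi uniqueness class, so your comparison does not apply to it.
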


As we will see in the proof of the above result, thanks to the Sobolev embedding and  
the Hardy-Littlewood-Sobolev inequality
one can show that for  $w\in  C([0,\infty),\MMM(\rr^2))$,
\eqref{cfish2} implies \eqref{cfas}.
The proof of \eqref{eq:betaw} is classical. When $\nu=0$ such a  result has been
proved in 
\cite[Theorem II.2]{dPL}
while the case $\nu > 0$ can be obtained by adapting a result from
\cite[Section III]{dPL-BFP}. 
For the sake of completeness, we will however sketch 
the proof of \eqref{eq:betaw} in Section~\ref{sec:blm}.

\subsection{Stochastic paths associated to the Navier-Stokes equation}

Since a solution $(w_t)_{t\geq 0}$ to the vorticity equation 
(\ref{NS2D}) does not take values in probability measures on $\rr^2$,
a few work is needed to find some related stochastic paths:
roughly, we write the initial vorticity $w_0$
as some partial information of the law $g_0$ of circulations and positions of
the vortices.

\vip

We consider $g_0 \in \PPP(\rr\times\rr^2)$ satisfying, for some
$A\in (0,\infty)$ and some $k\in (0,1]$,
\begin{align}\label{condic2}
\supp g_0 \subset \AA \times \rr^2, \;\;  \AA=[-A,A], \;\; \tMk(g_0)<\infty  
\;\; \hbox{and}\;\; \tH(g_0)<\infty.
\end{align}

\begin{rk}\label{remtc}
(i) Let $g_0 \in \PPP(\rr\times\rr^2)$ satisfying (\ref{condic2}) and define $w_0 \in \MMM(\rr^2)$ 
by
\beqn\label{lienwg}
\forall\; B \in \cB(\rr^2), \quad w_0(B)=\int_{\rr\times B} m g_0(dm,dx).
\eeqn
Then $w_0 \in L^1(\rr^2)$ and satisfies (\ref{condic}).

(ii) For $w_0 \in L^1(\rr^2)$ satisfy (\ref{condic}), it is possible to find a probability measure $g_0$ on 
$\rr\times\rr^2$ satisfying (\ref{condic2}) and such that (\ref{lienwg}) holds true.
\end{rk}

\begin{proof}
We first check (i). First, $|w_0|(dx) \leq A \int_{m\in \rr} g_0(dm,dx)$, so that
$|w_0|(\rr)\leq A$ and $w_0$ is a finite measure. Next, there holds $\int_{\rr^2} \langle x \rangle^k |w_0|(dx)
\leq A  \int_{\rr \times \rr^2} \langle x \rangle^k  g_0(dm,dx)<\infty$. Finally, to prove that $|w_0|$
has a density satisfying $\int_{\rr^2} |w_0(x)|\log (|w_0(x)|)dx <\infty$, it obviously suffices
to check that $\kappa(dx):=\int_{m\in \rr} g_0(dm,dx)$ has a finite entropy, since $|w_0| \leq A \kappa$.
We thus disintegrate $g_0(dm)=r_0(dm)f^m_0(dx)$ and use the convexity of the entropy functional
to get 
$$H(\kappa)= H\left(\int_{m\in \rr} r_0(dm)f^m_0(dx)\right) \leq \int_\rr r_0(dm) H(f^m_0)=\tH(g_0)<\infty
\quad \text{by assumption.}
$$ 

To verify (ii), 
write $w_0=w^+_0 - w^-_0$, for two non-negative functions with disjoint 
supports $w_0^+$ and $w_0^-$, put $a:=\intrd |w_0(x)|dx$ and set (for example)
$$
g_0(dm,dx)= \frac1{a} \delta_{a}(dm)  w_0^+(x)dx+ \frac1{a} \delta_{-a}(dm) w_0^-(x)dx.
$$
Then (\ref{lienwg}) holds true and (\ref{condic2}) is easily deduce from (\ref{condic}).
This is the most simple possibility, but many other exist. In general, $g$ may
be seen as a Young measure associated to $w$, and it may be of physical
interest to introduce Young measures in the context of the Euler equation, see for
instance \cite{Bouchet}.
\end{proof}

We can now introduce some (stochastic) paths associated to the
vorticity equation.

\begin{defi}\label{edsnl}
Let $g_0$ be a probability measure on $\rr\times \rr^2$ and consider a $g_0$-distributed random
variable $(\MM,\XX(0))$ independent of a $2D$-Brownian motion $(\BB_t)_{t\geq 0}$.
We say that a  $\rr^2$-valued process $(\XX(t))_{t\geq 0}$ solves the nonlinear SDE \eqref{NSDE} if
for all $t\geq 0$,
\begin{align} \label{nlinpro}
\XX(t)=\XX(0)+  \intot \intrd K(\XX(s)-x) \, w_s(dx)ds + \sigma \BB_t,
\end{align}
where $w_t$ is the measure on $\rr^2$ defined by 
\beqn\label{def:vorticityOFg}
\forall\; B \in \cB(\rr^2), \quad w_t(B)= \E[\MM \indiq_{\{\XX(t)\in B\}}] =  \int_{\rr\times B} m \, g_t(dm,dx)
\eeqn
where $g_t = \LL (\MM,\XX(t))$,  and if $(w_t)_{t\geq 0}$ satisfies (\ref{cfas}).
\end{defi}

Roughly, $\MM$ represents the circulation of a typical vortex and
$(\XX(t))_{t\geq 0}$ its path,
in an infinite vortices system subjected to the vorticity equation. The rigorous
link is the following.

\begin{rk}\label{nsdeins2d}
For $(\XX(t))_{t\geq 0}$ a solution to \eqref{NSDE}, $(w_t)_{t\geq 0}$ is a weak solution to (\ref{NS2D}).
\end{rk}

\begin{proof}
This can be checked by an application of the It\^o formula: for $\varphi\in C^2_b(\rr^2)$, we have
\bean
\MM\varphi(\XX(t))&=&\MM\varphi(\XX(0))+ \intot \intrd \MM \nabla\varphi(\XX(s)) \cdot K(\XX(s)-x)w_s(dx)ds \\
&&+ \,\nu
\intot \MM \Delta\varphi(\XX(s))ds + \sigma \intot \MM \nabla\varphi(\XX(s))
d\BB_s,
\eean
where we recall that $\nu := \sigma^2/2$. Taking expectations
and using that the last term is a martingale with mean $0$, we find (\ref{ws}).
\end{proof}

We will check the following consequence of Theorem~\ref{th:wp}.

\begin{thm}\label{wpnsde}
Let $g_0$ be a probability measure on $\rr\times \rr^2$ satisfying (\ref{condic2}). There exists a unique 
strong solution $(\XX(t))_{t \ge 0}$ to the nonlinear SDE \eqref{NSDE} 
such that
\begin{align}\label{tif}
\forall \; T>0,\quad \int_0^T \tI(g_s)ds <\infty,
\end{align}
$g_t\in \PPP(\rr\times \rr^2)$ being the law of $(\MM,\XX(t))$.
Furthermore, its associated vorticity function $(w_t)_{t\geq 0}$ satisfies \eqref{cfish2} and $(g_t)_{t\geq 0}$
satisfies the entropy equation 
\beqn\label{eq:EntropyEq}
\tilde H(g_t) + \nu \int_0^t \tilde I(g_s) \, ds = \tilde H(g_0) 
\qquad \forall \, t > 0.
\eeqn
\end{thm}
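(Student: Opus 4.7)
The plan is to freeze the vorticity equation, solve a linear SDE, and close the loop by uniqueness. Concretely, given $g_0$ satisfying (\ref{condic2}), Remark~\ref{remtc}(i) produces an initial vorticity $w_0\in L^1(\rr^2)$ satisfying (\ref{condic}); Theorem~\ref{th:wp} then yields a unique weak solution $w$ to (\ref{NS2D}) enjoying (\ref{cfish2}) and (\ref{bdd:dbK3}). Set $b(t,x):=(K\star w_t)(x)$; splitting the Biot--Savart kernel into its near and far parts and using $w\in C([0,\infty);L^1)\cap C((0,\infty);L^\infty)$ gives $\|b(t,\cdot)\|_\infty\leq C(\|w_t\|_1+\|w_t\|_\infty)$, so the drift is bounded on $[\eps,T]\times\rr^2$ for every $0<\eps<T$ and only integrably singular as $t\downarrow 0$ thanks to the parabolic smoothing in (\ref{bdd:dbK3}).

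For existence, I would solve the linear SDE $d\XX(t)=b(t,\XX(t))\,dt+\sigma\,d\BB_t$ with initial datum $(\MM,\XX(0))\sim g_0$; strong well-posedness for this class of drifts is classical (Girsanov change of measure, using that $b$ is bounded away from $t=0$). Writing $g_t=\LL(\MM,\XX(t))$ and disintegrating $g_t(dm,dx)=r(dm)\,f^m_t(dx)$, each conditional density $f^m_t$ solves the linear Fokker--Planck equation $\partial_t f^m+b\cdot\nabla f^m=\nu\Delta f^m$ (using $\nabla\cdot b=0$), so by averaging, $\tilde w_t:=\int m\,r(dm)\,f^m_t$ satisfies the very same linear transport--diffusion equation with datum $w_0$. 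But $w$ itself also solves this linear equation, so by uniqueness for transport--diffusion with a bounded divergence-free drift, $\tilde w=w$; hence $b=K\star\tilde w$ and $(\XX(t))_{t\geq 0}$ is a bona fide solution of the nonlinear SDE (\ref{NSDE}). The entropy--Fisher identity (\ref{eq:EntropyEq}) is then obtained by the classical manipulation $\frac{d}{dt}H(f^m_t)=-\nu I(f^m_t)$ on each Fokker--Planck (the transport term integrates to zero because $\nabla\cdot b=0$) followed by averaging against $r(dm)$; combining the lower bound $\tH(g_t)\geq -C(1+\tMk(g_t))$ with propagation of the $k$-th moment along the SDE yields (\ref{tif}).

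For uniqueness, let $\tilde\XX$ be any other solution of (\ref{NSDE}) satisfying (\ref{tif}). By Remark~\ref{nsdeins2d} its associated vorticity $\tilde w$ is a weak solution of (\ref{NS2D}), and the bound (\ref{tif}) combined with the Gagliardo--Nirenberg type interpolation $\|\nabla f\|_{L^q}^{q}\leq C\,I(f)^{q/2}\|f\|_{L^{q/(2-q)}}^{q/2}$ for $q\in[1,2)$, applied $m$ by $m$ and integrated against $r$, upgrades $\tilde w$ to the regularity (\ref{cfish2}). Theorem~\ref{th:wp} then forces $\tilde w=w$, so $\tilde\XX$ solves the same linear SDE as $\XX$ with the fixed drift $K\star w$, and strong pathwise uniqueness for that SDE closes the proof.

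The most delicate point is the behaviour as $t\downarrow 0$: the drift $b$ is only integrably bounded near the origin, the initial densities $f^m_0$ need not be smooth, the Fokker--Planck entropy computation is formal at that level, and the Gagliardo--Nirenberg step must be run uniformly in $m$ (or directly on $w$) to produce (\ref{cfish2}) from (\ref{tif}). All these issues are standard to handle via an approximation argument (smoothing $g_0$ or regularising $K$) and passing to the limit using lower semicontinuity of $\tH$ and $\tI$ recalled in Section~\ref{sec:prel2}.
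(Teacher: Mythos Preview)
Your strategy is coherent but takes a genuinely different route from the paper, and there are two places where it is looser than you indicate.

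\textbf{Existence.} The paper does \emph{not} first solve (\ref{NS2D}) and then build the linear SDE. It obtains existence in law of a solution to (\ref{NSDE}) satisfying (\ref{tif}) directly from Proposition~\ref{consist}, i.e.\ as a limit point of the particle system with $G_0^N=g_0^{\otimes N}$; the bound (\ref{tif}) is inherited from the uniform Fisher estimate on $G^N_t$ via the $\Gamma$-l.s.c.\ of $\tI$ (Theorem~\ref{th:levl3tH&tI}). Strong existence then follows by Yamada--Watanabe once pathwise uniqueness is proved. Your approach instead invokes Theorem~\ref{th:wp} for $w$ and then solves the linear SDE. Beware a circularity within this paper: the \emph{existence} half of Theorem~\ref{th:wp} is itself proved via Proposition~\ref{consist}, so if you want your argument to stand on its own you must quote Ben-Artzi directly for existence of $w$.

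\textbf{Pathwise uniqueness for the linear SDE.} This is where the two proofs differ most. You write that strong well-posedness is ``classical (Girsanov change of measure, using that $b$ is bounded away from $t=0$)''. Girsanov gives uniqueness in law, not pathwise uniqueness; you would need a Zvonkin/Veretennikov or Krylov--R\"ockner result, and the latter is exactly borderline here since $\|K\star w_t\|_\infty\lesssim t^{-1/2}$ lands on the critical exponent. The paper avoids this entirely: it proves pathwise uniqueness by a Crippa--De Lellis type argument, controlling $\E[\log(\delta+|\XX(t)-\YY(t)|)]$ via the maximal function inequality $|b(x)-b(y)|\le C(|M\nabla b|(x)+|M\nabla b|(y))|x-y|$. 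The key point is that this estimate is closed using \emph{both} $\nabla\bar K\in L^{3/2}_t L^3_x$ (from (\ref{cfish2})) and the fact that the laws $v^1_s,v^2_s$ of $\XX(s),\YY(s)$ lie in $L^3_t L^{3/2}_x$, which is exactly where the a priori hypothesis (\ref{tif}) enters. This explains why (\ref{tif}) appears in the uniqueness statement of the theorem; your route does not use (\ref{tif}) for uniqueness at all, relying instead on the extra $L^\infty$ regularity of $w_t$ for $t>0$.

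\textbf{Entropy identity.} Your derivation is morally right but, as you note, formal. The paper makes it rigorous not by smoothing $g_0$, but by running the renormalisation machinery of Theorem~\ref{th:wp} on the linear Fokker--Planck equation for $f^m_t$ (which satisfies the same Fisher-type bound (\ref{eq:bdfm2})), using the truncations $\beta_p''(s)=s^{-1}\indiq_{\{1/p\le s\le p\}}$ and letting $p\to\infty$ by monotone convergence, then integrating against $r_0(dm)$.

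In short: your outline can be made to work, but it leans on external PDE/SDE well-posedness results (Ben-Artzi existence, Veretennikov-type strong uniqueness) and on the $L^\infty$ smoothing of $w$, whereas the paper's proof is self-contained within the particle-system/Fisher-information framework and makes explicit why the class (\ref{tif}) is the natural uniqueness class.
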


\subsection{The stochastic particle system} \label{sec:WP}

As shown by Osada \cite{o2} and others, the system  \eqref{ps} is well-posed.

\begin{thm}\label{wpps}
Consider any family $(\MM_i^N,\XX_i^N(0))_{i=1,\dots,N}$ of
$\rr\times\rr^2$-valued random variables, independent of a family $(\BB_i(t))_{i=1,\dots,N,t\geq 0}$ 
of i.i.d. $2D$-Brownian motions and such that a.s., $\XX_i^N(0)\ne \XX_j^N(0)$ for all $i\ne j$. 
There exists a unique strong solution to (\ref{ps}).
\end{thm}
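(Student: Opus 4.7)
The plan is to first localize the SDE by stopping before collisions occur, then to rule out collisions almost surely via a Lyapunov--function argument adapted to the divergence-free/harmonic structure of the Biot--Savart kernel. Conditioning on the (finite) family of circulations $(\MM^N_i)_{i\le N}$, which enter \eqref{ps} only as bounded parameters, I would define the increasing sequence of stopping times
\[
\tau_k := \inf\bigl\{t \ge 0 : \min_{i \ne j} |\XX^N_i(t) - \XX^N_j(t)| \le 1/k\bigr\}, \qquad \tau := \lim_{k\to\infty} \tau_k.
\]
On $[0,\tau_k]$ the drift of \eqref{ps} is globally bounded and Lipschitz (with constants depending on $k$, $N$, and $\max_i |\MM^N_i|$), so the classical Cauchy--Lipschitz theory of It\^o SDEs yields a unique strong solution up to $\tau_k$. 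Patching these solutions produces a unique strong solution on $[0,\tau)$, and the whole statement reduces to proving $\tau = +\infty$ almost surely.

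The decisive structural remarks are that $K = \nabla^\perp \log|\cdot|$ is divergence-free and that $\log|\cdot|$ is harmonic on $\R^2\setminus\{0\}$. These translate into two cancellations when one applies It\^o's formula to functions of $\log|X_i-X_j|$: on the one hand the It\^o correction involves $\Delta \log|\cdot| = 0$; on the other hand the pointwise identity
\[
K(x)\cdot \nabla \log|x| \;=\; \frac{x^\perp}{|x|^2}\cdot \frac{x}{|x|^2}\;=\;0
\]
kills the most singular drift contribution. Applying It\^o to the ``energy'' functional $V(X):=-\sum_{i<j}\MM^N_i\MM^N_j\log|X_i-X_j|$ along the process up to $\tau_k$, the second-order term vanishes by harmonicity, the diagonal drift terms with $l\in\{i,j\}$ vanish by the displayed orthogonality, and the triple-index terms with $l\notin\{i,j\}$ cancel pairwise using the antisymmetry $a\cdot b^\perp = -b\cdot a^\perp$. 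Hence $(V(\XX^N(t\wedge\tau_k)))_{t\ge 0}$ is a continuous local martingale.

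The main obstacle is that $V$ need not be bounded below when the $\MM^N_i$ have mixed signs, so it cannot directly detect collisions of opposite-signed pairs. To circumvent this I would work instead with a non-negative Lyapunov functional of the form $W(X) := \sum_{i<j} \psi(|X_i-X_j|)$ with $\psi\in C^2((0,\infty))$, $\psi(r)\uparrow +\infty$ as $r\downarrow 0$, chosen so that the same algebraic cancellations survive modulo controllable error terms (a natural choice is a smoothed $(-\log r)_+^2$, or one can split $V$ into its positive and negative pair contributions and apply the cancellation argument separately to each piece as in Osada \cite{o2}). An application of It\^o followed by Gr\"onwall's lemma would then yield a bound of the form
\[
\sup_k \E\bigl[W(\XX^N(t\wedge\tau_k))\bigr] \;\le\; C(t,N,\max_i|\MM^N_i|)\bigl(1+\E[W(\XX^N(0))]\bigr) \;<\;\infty
\]
for each $t>0$, the initial expectation being finite because the $\XX^N_i(0)$ are a.s. pairwise distinct. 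Since $W(\XX^N(\tau_k))\to +\infty$ on $\{\tau_k<t\}$ as $k\to\infty$, Markov's inequality forces $\Prob(\tau<\infty)=0$, and the unique strong solution on $[0,\tau)$ extends to $[0,\infty)$, completing the proof.
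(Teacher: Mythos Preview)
The paper does not supply its own proof of this statement; it quotes the result from Osada \cite{o2} (and mentions Takanobu \cite{Takanobu} for positive circulations and Fontbona--Martinez \cite{FontMart}). Osada's argument relies on Nash-type estimates for fundamental solutions of parabolic equations with divergence-free drift \cite{oDiff}, not on a direct Lyapunov computation.

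Your localization strategy and the identification of the Kirchhoff energy $V$ as a local martingale are correct, and you rightly flag that $V$ cannot serve as a Lyapunov function when the circulations have mixed signs. The gap lies in the proposed remedy. For $\psi(r)=(-\log r)_+^2$ one computes $\Delta\psi(|x|)=2/|x|^2$ on $\{|x|<1\}$, so the It\^o correction in $dW$ contributes $+2\sigma^2\sum_{i<j}|\XX^N_i-\XX^N_j|^{-2}$, which is \emph{not} dominated by $C(1+W)$ and wrecks the Gr\"onwall step already for $N=2$. Taking instead $\psi=(-\log r)_+$ kills the It\^o correction by harmonicity, but then the off-diagonal drift terms with $l\notin\{i,j\}$ are of order $|\XX^N_i-\XX^N_j|^{-1}|\XX^N_i-\XX^N_l|^{-1}$, and without the weights $\MM^N_i\MM^N_j$ the triple-index antisymmetry you exploited for $V$ no longer produces a cancellation---so again no bound by $C(1+W)$ is available. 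The alternative you mention, splitting $V$ according to the sign of $\MM^N_i\MM^N_j$, destroys the same cancellation for the same reason; this is not what Osada does. (A minor further point: a.s.\ pairwise distinctness of the $\XX^N_i(0)$ does not by itself yield $\E[W(\XX^N(0))]<\infty$; one would have to condition on the initial configuration.)

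In short, the ``controllable error terms'' you allude to are exactly the obstruction, and no elementary choice of $\psi$ seems to close the argument. The known proofs go by other routes: Osada's Nash estimates give uniform bounds on the transition density of the system, from which the diagonal has zero mass; Fontbona--Martinez adapt the Marchioro--Pulvirenti path-clustering argument from the deterministic vortex model.
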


Actually, Osada \cite{o2} shows that a.s., for all $t\geq 0$, all $i\ne j$, $\XX^N_i(t)\ne \XX^N_j(t)$.
This implies the well-posedness of (\ref{ps}), since the singularity of $K$ is thus a.s.
never visited by the system.

\vip

Let us give a few more references. When the circulations 
$\MM_i^N$ are positive, Takanobu proved  the well-posedness of the system using a martingale argument 
\cite{Takanobu}.  Osada extended in \cite{o2} his results to arbitrary
vorticities  using estimates 
{\it \`a la Nash}
for fundamental solutions to parabolic equations with divergence free drift \cite{oDiff}. More recently, 
Fontbona and Martinez adapted in \cite{FontMart} the technique used by Marchioro
and Pulvirenti for the 
deterministic $N$ vortex models \cite[Chapter 4.2]{MarPul} to the stochastic case \eqref{ps}.

\subsection{The result of propagation of chaos}
To study the many-particle limit of the vortex system (\ref{ps}), we  have to impose some compactness and
consistency properties on the initial system.

\vip

Denote by $G^N_0 \in \PPP((\R\times\R^2)^N)$ the law of $(\MM_i^N,\XX_i^N(0))_{i=1,\dots N}$. We will assume that 
there are $k\in (0,1]$, $A\in (0,\infty)$ and  $g_0\in \PPP(\R\times\R^2)$ supported in $\AA\times\R^2$, where
$\AA=[-A,A]$, such that, setting $r_0(dm):=\int_{x\in\rr^2}g_0(dm,dx) \in \PPP(\AA)$,

\beqn\label{chaosini}
\left\{\begin{array}{l}
G^N_0 \in  \Psym((\rr\times\rr^2)^N) \hbox{ is } g_0\hbox{-chaotic;} \\ \\
\sup_{N\geq 2} \tMk(G^N_0)<\infty, \quad \sup_{N\geq 2}\tH(G^N_0) <\infty;  \\ \\
R^N_0(dm_1,\dots,dm_N) := \int_{(\R^2)^N} G^N_0(dm_1,dx_1,\dots,dm_N, dx_N) = r_0^{\otimes N}(dm_1,\dots,dm_N).
\end{array}\right.
\eeqn

This last condition asserts that $\MM_1^N,\dots,\MM_N^N$ are i.i.d. and $r_0$-distributed.

\begin{rk}\label{remc}
(i) The typical situation is the following: let $g_0 \in \PPP(\rr\times\rr^2)$ satisfy 
\eqref{condic2} and consider, for $N\geq 2$, an i.i.d. family
$(\MM_i^N,\XX_i^N(0))_{i=1,\dots,N}$ of $g_0$-distributed random variables. 
Then $G_0^N=g_0^{\otimes N}$ and \eqref{chaosini} is met.

(ii) Consider a family $(\MM_i^N,\XX_i^N(0))_{i=1,\dots N}$ satisfying (\ref{chaosini})
with some $g_0 \in \PPP(\R\times\R^2)$. Then $g_0$ automatically satisfies (\ref{condic2}),
so that the nonlinear SDE \eqref{NSDE} has a unique solution associated to $g_0$ by Theorem \ref{wpnsde}. 
Also, $w_0$ defined from $g_0$ as in (\ref{lienwg}) satisfies (\ref{condic}) by 
Remark \ref{remtc}-(i), so that
Theorem \ref{th:wp} implies that (\ref{NS2D}) with $w_0$ as initial condition is well-posed.

(iii) Under \eqref{chaosini}, we have $\tH(G^N_0)<\infty$ for each $N\geq 2$, whence the law
of $(\XX_1^N(0),\dots,\XX_N^N(0))$ has a density on $(\rr^2)^N$. In particular, $\XX_i^N(0)\ne \XX_j^N(0)$
a.s. for all $N\geq 2$, all $i\ne j$, so that for each $N\geq 2$, the particle system \eqref{ps} is 
well-posed by Theorem 
\ref{wpps}.
\end{rk}

\begin{proof}
Point (i) is easily checked, using in particular that $\tMk(G^N_0)=\tMk(g_0)$
and $\tH(G^N_0)=\tH(g_0)$
for all $N\geq 2$. 
For (ii), we just have to check that $g_0$ satisfies (\ref{condic2}). 
But by exchangeability we have $M_k(G^N_0)= M_k(G^N_{0,1})$, where
$G^N_{0,1}$ denotes the first marginal of $G^N_0$. Since $ G^N_{0,1} \wto g_0$
weakly in the sense of measures, we get
$\tMk(g_0) \leq \liminf_N \tMk(G^N_{0,1}) <  \infty$.
Finally, the $\Gamma$-sci property $\tH(g_0) \leq \liminf_N \tH(G^N_0)$ is more 
difficult to prove but follows from Theorem \ref{th:levl3tH&tI} below.
Point (iii) is obvious.
\end{proof}

Let us now write down the first part of our main result, concerning the paths of
the particles.
Here $C([0,\infty),\rr^2)$ is endowed with the topology of 
uniform convergence on compacts.

\begin{thm}\label{th:mr}
Consider, for each $N\geq 2$, a family $(\MM_i^N,\XX_i^N(0))_{i=1,\dots,N}$
of $\rr\times\rr^2$-valued random variables. Assume that the initial chaos assumptions \eqref{chaosini} holds 
true for some $g_0$.
For each $N\geq 2$, consider the unique solution (see Remark \ref{remc}-(iii))
$(\XX^N_i(t))_{i=1,\dots,N,t\geq 0}$ to (\ref{ps}), and the unique solution $(\XX(t))_{t\ge 0}$ to 
the nonlinear SDE \eqref{NSDE} given by Theorem~\ref{wpnsde} associated to
$g_0$ (see
Remark \ref{remc}-(ii)). Then, the sequence $(\MM^N_i,(\XX^N_i(t))_{t\ge 0})_{i = 1 \ldots,N}$ is 
$(\MM,(\XX(t))_{t\ge 0})$-chaotic.

In particular, it implies that if we set
\begin{equation}\label{def:WN}
 \WW^N_t:= \frac1N \sum_{i=1}^N \MM_i^N \delta_{\XX_i^N(t)},
\end{equation}
then $(\WW^N_t)_{t\geq 0}$  goes in probability in $C([0,\infty),\MMM(\rr^2))$, as $N\to \infty$, to
the unique weak solution
$(w_t)_{t\geq 0}$ given by Theorem~\ref{th:wp} to the vorticity equation
(\ref{NS2D}) starting from $w_0$  (see Remark \ref{remc}-(ii)).
\end{thm}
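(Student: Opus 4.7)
The plan is to follow the compactness/consistency/uniqueness strategy outlined in the introduction, working directly at the trajectorial level. We consider the random empirical measure
$$\hat\mu^N := \frac1N\sum_{i=1}^N \delta_{(\MM_i^N,(\XX_i^N(t))_{t\ge 0})}\in \PPP(\rr\times C([0,\infty),\rr^2)),$$
and set $\pi^N:=\LL(\hat\mu^N)$. By the equivalence of chaos formulations recalled before Definition \ref{def:EntChaos}, trajectorial chaos amounts to showing $\pi^N \to \delta_g$ weakly, where $g$ is the law on $\rr\times C([0,\infty),\rr^2)$ of the nonlinear process $(\MM,(\XX(t))_{t\ge 0})$ produced by Theorem \ref{wpnsde}.

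The first crucial ingredient will be an $N$-uniform Fisher-information bound. Since $K$ is divergence-free, a direct Fokker-Planck computation on \eqref{ps} gives the entropy-dissipation identity
$$\tH(G^N_t) + \nu \int_0^t \tI(G^N_s)\,ds = \tH(G^N_0),\qquad t\ge 0,$$
where $G^N_t$ denotes the law of $(\MM^N_i,\XX^N_i(t))_{i\le N}$. Combined with \eqref{chaosini} and a standard propagation of the partial moment $\tMk$ (via It\^o's formula on $\langle x\rangle^k$), this yields $\sup_N \tH(G^N_t)<\infty$, $\sup_N \tMk(G^N_T)<\infty$, and $\sup_N \int_0^T \tI(G^N_s)\,ds<\infty$. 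Through Sobolev embedding these Fisher bounds translate into $N$-uniform $L^p$-in-$(s,x)$ integrability of the low-order marginal densities, which is strong enough to tame the singular Biot-Savart kernel: in particular, it shows that \emph{close encounters} $\{|\XX^N_i(s)-\XX^N_j(s)|\le \varepsilon\}$ carry asymptotically negligible mass. Together with the Brownian contribution, the resulting $L^p$-in-time control on the drift then gives tightness of $\pi^N$ via an Aldous-Kolmogorov-type argument; tightness of the circulation components is immediate since $\MM^N_i\in\AA$, a compact set.

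For consistency we extract a subsequential weak limit $\pi^\infty$ and prove that $\pi^\infty$-almost surely any $Q$ satisfies the nonlinear martingale problem associated with \eqref{NSDE}. The main obstacle will be passing to the limit in the singular drift, both in the self-consistent definition of $w_s=w_s(Q)$ via \eqref{def:vorticityOFg} and inside the empirical evaluation $\frac1N\sum_{j\ne i}\MM^N_j K(\XX^N_i-\XX^N_j)$, where spurious diagonal contributions from near-coalescing particles must be ruled out. Here we will exploit the $\Gamma$-lower semi-continuity of the (properly rescaled) level-3 Fisher information advertised in Section \ref{sec:prel2}, which upgrades the bound from the previous step to the $\pi^\infty$-a.s.\ estimate $\int_0^T \tI(Q_s)\,ds<\infty$ on the limit. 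This extra regularity is precisely what makes all integrals against $K$ absolutely convergent in the limit and allows us to identify $Q$ with a solution of \eqref{nlinpro}-\eqref{cfas}-\eqref{cfish2}.

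Uniqueness of the nonlinear process (Theorem \ref{wpnsde}) then forces $Q=g$ for $\pi^\infty$-a.e.\ $Q$, so $\pi^\infty=\delta_g$; this is the desired trajectorial chaos, and continuity of the map $Q\mapsto \bigl(\int m\,Q_t(dm,\cdot)\bigr)_{t\ge 0}$ in our functional setting gives the convergence $\WW^N_t\to w_t$ in probability in $C([0,\infty),\MMM(\rr^2))$. For the entropic strengthening \eqref{eq:cvgce2}, the plan is to compare the $N$-particle dissipation identity with \eqref{eq:EntropyEq}: the semi-continuity chain
$$\tH(g_t) + \nu\int_0^t\tI(g_s)\,ds \;\le\; \liminf_{N\to\infty}\Big(\tH(G^N_t)+\nu\int_0^t\tI(G^N_s)\,ds\Big) \;=\; \tH(g_0)$$
combined with \eqref{eq:EntropyEq} must be an equality, which by the entropy-dissipation trick of \cite{MMKacProg} forces $\tH(G^N_t)\to\tH(g_t)$, i.e.\ entropic chaos, and this yields the strong $L^1$-convergence of $w^N_1(t)$ to $w_t$.
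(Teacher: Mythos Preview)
Your proposal is correct and follows essentially the same route as the paper: uniform entropy/Fisher/moment bounds (the paper's Proposition~\ref{cutoff}), tightness of the empirical path measures (Lemma~\ref{tight}), identification of any limit point with a solution of the nonlinear SDE via a martingale-problem argument combined with the $\Gamma$-lsc of the level-3 Fisher information (Proposition~\ref{consist} and Theorem~\ref{th:levl3tH&tI}), and then uniqueness from Theorem~\ref{wpnsde} to conclude. Two minor remarks: the moment propagation is not entirely ``standard'' since bounding the drift term already requires the Fisher estimate (Lemma~\ref{lem:FishInteg}), so the moment and Fisher bounds are obtained jointly rather than sequentially; and your final paragraph on entropic chaos actually proves Theorem~\ref{th:mr2}, not the statement in question.
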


Our last result deals with entropic chaos.

\begin{thm}\label{th:mr2}
Adopt the same notation and assumptions as in Theorem~\ref{th:mr} and assume furthermore that
$\lim_n \tH(G^N_0)=\tH(g_0)$ (which is the case if $G^N_0 = g^{\otimes N}$). For $t\geq 0$, denote by
$g_t \in \PPP(\rr\times\rr^2)$ the law of  $(\MM,\XX(t))$.

(i) For all $t\geq 0$, $((\MM_i^N,\XX_i^N(t))_{i=1,\dots,N}$ is $g_t$-entropically chaotic in the sense that,
denoting by $G^N_t\in\PPP((\R\times \R^2)^N)$ its law,
$$
(\MM_1^N,\XX_1^N(t)) \to  g_t \quad \hbox{in law} \quad \hbox{and} \quad \tH(G^N_t) \to \tH(g_t)
\quad \hbox{as}\quad N\to\infty.
$$

(ii) For $j=1,\dots,N$, define the $j$-particle vorticity $w^N_{jt}$ as the measure
on $(\rr^2)^j$:
\beqn\label{eq:TypicalVorticity}
w^N_{jt} (dx_1, \dots , dx_j) := 
\int_{m_1,\dots,m_N \in \rr, \; x_{j+1},\dots,x_N \in \rr^2} m_1 \dots m_j  \, G^N_t(dm_1,dx_1, \dots, dm_N,dx_N).
\eeqn
This measure has a density and for all fixed $t\geq 0$, all fixed $j\geq 1$, 
\beqn\label{eq:cvgewN1fort}
w^N_{jt} \to w_t^{\otimes j}  \quad\hbox{strongly in} \,\, L^1((\R^2)^j) \,\, \hbox{as} \,\, N \to \infty.
\eeqn
\end{thm}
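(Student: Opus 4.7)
The plan for (i) is to compare the entropy/Fisher identity for the $N$-particle system, the entropy identity \eqref{eq:EntropyEq} for the nonlinear SDE, and the $\Gamma$-lower semicontinuity of $\tH$ and $\tI$ (Theorem~\ref{th:levl3tH&tI}). Once (i) is established, (ii) will follow from the classical upgrade of entropic chaos to strong $L^1$ convergence of marginal densities, followed by integration against the bounded function $m_1\cdots m_j$ to recover $w^N_{jt}$.

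First, I would write the forward Kolmogorov equation for the density $G^N_t$ on $\AA^N\times(\R^2)^N$; the spatial drift is divergence-free since $K=\nabla^\perp\log|\cdot|$ is, so a formal computation (made rigorous by regularization in spite of the singularity of $K$, using the uniform Fisher information bounds already available from the compactness argument) yields the entropy identity
$$
\tH(G^N_t) + \nu\int_0^t \tI(G^N_s)\,ds = \tH(G^N_0).
$$
By Theorem~\ref{th:mr}, for each fixed $s$ the law $G^N_s$ is $g_s$-chaotic, so Theorem~\ref{th:levl3tH&tI} gives $\tH(g_s) \leq \liminf_N \tH(G^N_s)$ and $\tI(g_s) \leq \liminf_N \tI(G^N_s)$. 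Combining Fatou's lemma with \eqref{eq:EntropyEq}, the identity above, and the hypothesis $\tH(G^N_0) \to \tH(g_0)$, one obtains
\begin{align*}
\tH(g_t) + \nu\int_0^t \tI(g_s)\,ds = \tH(g_0) &\leq \liminf_N \Bigl[\tH(G^N_t) + \nu\int_0^t\tI(G^N_s)\,ds\Bigr] \\
&= \lim_N \tH(G^N_0) = \tH(g_0),
\end{align*}
so equality must hold throughout. A standard sub-subsequence argument, using the individual liminf bounds, then forces $\tH(G^N_t)\to\tH(g_t)$ (and incidentally $\int_0^t \tI(G^N_s)\,ds\to\int_0^t\tI(g_s)\,ds$); combined with $G^N_{t,1}\Rightarrow g_t$ from Theorem~\ref{th:mr}, this is the entropic chaos of~(i).

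For (ii), the general theory of entropic chaos from~\cite{hm}, adapted to the partial entropy $\tH$ (the circulations being $r_0^{\otimes N}$-distributed parameters independent of the rest), upgrades the weak convergence of the $j$-marginals to strong $L^1((\R\times\R^2)^j)$ convergence of their densities, so that $G^N_{t,j} \to g_t^{\otimes j}$ in $L^1$. Since $|m_1\cdots m_j|\leq A^j$ on $\AA^j$, integrating against $m_1\cdots m_j$ then yields $\|w^N_{jt}-w_t^{\otimes j}\|_{L^1((\R^2)^j)} \leq A^j \|G^N_{t,j}-g_t^{\otimes j}\|_{L^1} \to 0$, which is exactly~\eqref{eq:cvgewN1fort}. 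The main obstacle is the second step of~(i): applying the level-$3$ $\Gamma$-lsc of the Fisher information to pass from chaos of the marginals to the liminf inequality $\tI(g_s)\le\liminf_N\tI(G^N_s)$, which rests on the linearity of the level-$3$ Fisher information on mixed states proved in~\cite{hm}; a secondary technical point is making the entropy identity of the first step fully rigorous despite the singularity of~$K$.
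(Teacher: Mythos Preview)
Your proposal is correct and follows essentially the same route as the paper: sandwich the $N$-particle entropy--Fisher relation against the limiting identity \eqref{eq:EntropyEq} using the $\Gamma$-lsc of $\tH$ and $\tI$ from Theorem~\ref{th:levl3tH&tI}, then upgrade entropic chaos to strong $L^1$ convergence of the $j$-marginals and integrate out the circulations.

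Two small technical remarks. First, for the $N$-particle system the paper only claims (and only needs) the \emph{inequality}
\[
\tH(G^N_t)+\nu\int_0^t\tI(G^N_s)\,ds\le\tH(G^N_0),
\]
obtained by regularizing $K$ and passing to the limit via lower semicontinuity; your sandwich argument uses only this direction anyway, so claiming the full identity is unnecessary. Second, in (ii) you write $G^N_{t,j}\to g_t^{\otimes j}$ in $L^1((\R\times\R^2)^j)$, but $r_0$ may have atoms, so these measures need not have Lebesgue densities in the $m$-variables. The paper handles this by disintegrating $G^N_{tj}=r_0^{\otimes j}\,F^{N,M}_{tj}$ and proving $F^{N,M}_{tj}\to\prod_i f^{m_i}_t$ in $L^1(r_0^{\otimes j}(dM)\,dX)$ (via strict convexity of $s\log s$, Dunford--Pettis, and Egorov); your final bound $\|w^N_{jt}-w_t^{\otimes j}\|_{L^1}\le A^j\|\cdot\|$ then goes through with that norm.
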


\subsection{Plan of the proof}

In Section \ref{sec:prel}, we prove various functional inequalities, showing in particular that 
a Fisher information estimate for the $N$-particles distribution allow us 
to control the {\it close encounters}  between particles.
Section \ref{sec:prel2} is dedicated to a result in the spirit of
Robinson-Ruelle \cite{RR}: the partial entropy $\tH$ and partial 
Fisher information $\tilde I$ are affine on mixed states, which implies the
$\Gamma$-lower semi continuity 
of both functionals. Precisely, that was proved in \cite{hm} for the full
entropy and Fisher information and here we only present the adaptation necessary
in the partial case. In Section \ref{sec:smainest}, we prove our main estimate: 
denoting by $G^N_t=\LLL((\MM^N_1,\XX^N_1(t)),\dots,(\MM^N_N,\XX^N_N(t)))$, 
$$
\forall \; T>0, \quad
\sup_{N\geq 2} \left\{ \sup_{[0,T]}[\tilde H(G^N_t) + \tilde M_k(G^N_t) ]+ \int_0^T \tilde I(G^N_t) \, dt  
\right\} <\infty
$$
and deduce the tightness of our system. We then show that any limit point solves the nonlinear S.D.E.
in Section \ref{sec:pr}, and satisfies the a priori condition of
Theorem~\ref{wpnsde}. We prove our uniqueness results (Theorems \ref{th:wp} and
\ref{wpnsde}) 
in Section \ref{sec:blm}
and conclude the proofs of Theorems 
\ref{th:mr} and \ref{th:mr2} in
Section \ref{sec:concl}.

\vip
We close that section with a convention that we shall use in all the sequel. 
We write $C$ for a (large) finite constant and $c$ for a positive constant depending
only on $\sigma$ and on all the bounds assumed in (\ref{condic}), (\ref{condic2}) and (\ref{chaosini}). 
Their values 
may change from line to line. All other dependence will be indicated in subscript.

\section{Entropy and Fisher information} \label{sec:prel}
\setcounter{equation}{0}

In this section, we present a series of results involving the Boltzmann
entropy $H$, the Fisher information $I$ and their modified versions $\tH,\tI$.
In the sequel of the article, they will provide key estimates in order to
exploit the regularity of the objects we will deal with. 

\vip

The following very classical estimate will be useful in order to get bounds on the system of particles
in the next section. It also explains why the entropy is well-defined from
$\PPP_k((\R^2)^N)$ into $\R \cup \{+\infty \}$. 
See the comments before \cite[Lemma 3.1]{hm} for the proof.

\begin{lem}\label{ieth}
For any $k,\lambda \in (0,\infty)$, there is a constant $C_{k,\lambda} \in \R$ such that 
for any $N\geq 1$, any $F \in \PPP_k((\rr^{2})^N)$
$$
H (F) \geq  - C_{k,\lambda} - \lambda \, M_k(F).
$$
\end{lem}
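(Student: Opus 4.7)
The plan is to compare $F$ to a tilted product reference measure and use the non-negativity of relative entropy (equivalently, Jensen's inequality applied to $-\log$). This is a completely standard trick for bounding the entropy from below by a moment.

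First, I would reduce to the case where $F$ has a density on $(\R^2)^N$, since otherwise $H(F)=+\infty$ by convention and the inequality is trivial. Next, for the fixed parameters $k,\lambda>0$, define the finite normalizing constant
\[
z := \int_{\R^2} e^{-\lambda \langle x \rangle^k} dx,
\]
which is finite because, in polar coordinates, $\int_0^\infty r\, e^{-\lambda (1+r^2)^{k/2}}dr <\infty$ for every $k>0$. Then introduce the product probability density
\[
G(X) := \prod_{i=1}^N \frac{1}{z}\, e^{-\lambda \langle x_i \rangle^k}
= z^{-N} \exp\Bigl(-\lambda \sum_{i=1}^N \langle x_i\rangle^k\Bigr),
\qquad X=(x_1,\dots,x_N)\in(\R^2)^N.
\]

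The main (and only) step is then to use the non-negativity of the relative entropy $\int F \log (F/G)\, dX \ge 0$, or equivalently Jensen applied to $x\mapsto -\log x$ against the probability $F\,dX$. This yields
\[
\int_{(\R^2)^N} F(X) \log F(X)\, dX \;\ge\; \int_{(\R^2)^N} F(X) \log G(X)\, dX
= -N\log z - \lambda \int_{(\R^2)^N} F(X)\sum_{i=1}^N \langle x_i\rangle^k\, dX.
\]
By the definition $\langle X\rangle^k = \tfrac1N \sum_i\langle x_i\rangle^k$, the last integral equals $N\, M_k(F)$. Dividing the whole inequality by $N$ and using the definition of $H(F)$, one obtains
\[
H(F) \;\ge\; -\log z - \lambda \, M_k(F),
\]
so the constant $C_{k,\lambda}:=\log^+ z$ (which is independent of $N$) does the job.

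There is no real obstacle here; the only point to check is that the bound is uniform in $N$, which it is because the $1/N$ normalizations in the definitions of $H$ and $M_k$ match exactly the $N\log z$ and $N\,M_k(F)$ factors coming from the product structure of $G$. The same computation is why $H(g^{\otimes N})=H(g)$ with the chosen normalization.
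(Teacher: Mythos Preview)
Your argument is correct and is exactly the standard relative-entropy comparison that the paper has in mind; the paper itself does not write out a proof but refers to \cite[Lemma~3.1]{hm}, where the same tilted-product reference measure is used. The only cosmetic point is that you may simply take $C_{k,\lambda}=\log z$ (no positive part needed), and it is worth noting explicitly that $\int F\,|\log G|\,dX<\infty$ because $F\in\PPP_k((\R^2)^N)$, so the splitting of $\int F\log(F/G)$ into $\int F\log F-\int F\log G$ is legitimate.
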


We next establish some kind of Gagliardo-Nirenberg-Sobolev inequality involving the Fisher information.

\begin{lem} \label{lem:FishInteg1}
For any $f \in \PPP(\R^2)$ with finite Fisher information, there holds 
\begin{align} 
\label{eq:LpbdFisher} 
&\forall \; p\in [1,\infty), \quad \| f \|_{L^p(\rr^2)} \leq C_p \,   I(f)^{1 - 1/p},\\
\label{eq:nablaLqbdFisher}
&\forall \; q\in [1,2), \quad
\| \nabla f \|_{L^q(\rr^2)} \leq C_q  \,  I(f)^{ {3/2}-{1/q}}.
\end{align}
\end{lem}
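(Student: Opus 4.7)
The plan is to reduce everything to the identity
\[
I(f) = 4 \int_{\R^2} |\nabla \sqrt{f}|^2 \, dx,
\]
valid for any $f \in \PPP(\R^2)$ with finite Fisher information. Setting $u := \sqrt{f}$, we then have $\|u\|_{L^2}^2 = 1$ and $\|\nabla u\|_{L^2}^2 = I(f)/4$, so $u$ sits in $H^1(\R^2)$ with controlled norms.

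For the first inequality \eqref{eq:LpbdFisher}, I will apply the $2$D Gagliardo--Nirenberg interpolation
\[
\|u\|_{L^{2p}(\R^2)} \leq C_p \, \|u\|_{L^2(\R^2)}^{1/p} \, \|\nabla u\|_{L^2(\R^2)}^{1 - 1/p}
\]
which is valid for any $p \in [1,\infty)$. Squaring and using $\|\sqrt{f}\|_{L^{2p}}^2 = \|f\|_{L^p}$ and $\|\nabla \sqrt{f}\|_{L^2}^2 = I(f)/4$ produces the bound $\|f\|_{L^p} \leq C_p \, I(f)^{1 - 1/p}$. The case $p = 1$ is trivial since $f$ is a probability density.

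For \eqref{eq:nablaLqbdFisher}, the idea is to factor $|\nabla f|^q = (|\nabla f|^2/f)^{q/2} \cdot f^{q/2}$ and apply H\"older's inequality with exponents $2/q$ and $2/(2-q)$ (both $\geq 1$ since $q \in [1,2)$). This gives
\[
\|\nabla f\|_{L^q}^q \leq I(f)^{q/2} \left(\int_{\R^2} f^{q/(2-q)} \, dx \right)^{(2-q)/2}
= I(f)^{q/2} \, \|f\|_{L^r}^{q/2},
\]
where $r := q/(2-q) \in [1,\infty)$. Plugging in \eqref{eq:LpbdFisher} with exponent $r$ yields $\|f\|_{L^r} \leq C \, I(f)^{1 - 1/r}$, hence
\[
\|\nabla f\|_{L^q} \leq C \, I(f)^{1/2} \, I(f)^{(1 - 1/r)/2} = C \, I(f)^{(2 - 1/r)/2}.
\]
A direct computation gives $1/r = 2/q - 1$, so the exponent equals $3/2 - 1/q$, which is the desired bound.

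There is no serious obstacle here. The only points that need a brief justification are the identity $I(f) = 4\|\nabla\sqrt{f}\|_{L^2}^2$ (a standard chain-rule computation, valid in the weak sense because finite Fisher information implies $\sqrt{f} \in H^1$) and the ranges of the exponents: for $q \in [1,2)$ both H\"older exponents $2/q$ and $2/(2-q)$ lie in $[1,\infty]$, and $r = q/(2-q) \geq 1$, which is exactly what makes \eqref{eq:LpbdFisher} applicable. The restriction $q < 2$ in \eqref{eq:nablaLqbdFisher} reflects the fact that $r \to \infty$ as $q \to 2$, and $H^1(\R^2)$ does not embed into $L^\infty$.
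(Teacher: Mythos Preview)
Your proof is correct. Both you and the paper use the same H\"older factorization $|\nabla f|^q = (|\nabla f|^2/f)^{q/2} f^{q/2}$ to link $\|\nabla f\|_{L^q}$ with $I(f)$ and $\|f\|_{L^{q/(2-q)}}$, but the logical order is reversed. The paper first proves \eqref{eq:nablaLqbdFisher} via a bootstrap: it combines the H\"older step with the Sobolev inequality $\|f\|_{L^{q^\ast}} \leq C_q \|\nabla f\|_{L^q}$ and $L^1$--$L^{q^\ast}$ interpolation to obtain a self-referential inequality of the form $\|\nabla f\|_{L^q} \leq C_q\, I(f)^{1/2} \|\nabla f\|_{L^q}^{\theta}$ with $\theta<1$, which it then solves; the $L^p$ bound \eqref{eq:LpbdFisher} is deduced afterwards from \eqref{eq:nablaLqbdFisher}. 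You instead prove \eqref{eq:LpbdFisher} first, directly, by applying the $2$D Gagliardo--Nirenberg inequality to $u=\sqrt{f}$ (exploiting the identity $I(f)=4\|\nabla\sqrt{f}\|_{L^2}^2$), and then feed \eqref{eq:LpbdFisher} into the H\"older step to get \eqref{eq:nablaLqbdFisher}. Your route avoids the bootstrap and is slightly more direct; the paper's route has the minor advantage of not invoking the chain rule $\nabla\sqrt{f}=\nabla f/(2\sqrt{f})$ at the level of weak derivatives.
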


\begin{proof} We start with (\ref{eq:nablaLqbdFisher}).
Let $q \in [1,2)$ and use the H\"older inequality:
$$
\|\nabla f\|_{L^q}^q = \int \left|\frac{\nabla f}{\sqrt f}\right|^q f^{q/2}
\leq \left( \int \frac{|\nabla f|^2}{ f} \right)^{q/2} 
\left( \int  f^{q/(2-q)} \right)^{(2-q)/2} =  I(f)^{q/2} \, \|f \|_{L^{q/(2-q)}}^{q/2}. 
$$
Denoting by $q^\ast  =  2q/(2-q) \in [2,\infty)$ the Sobolev exponent associated to $q$, we have,
thanks to a standard interpolation inequality and to the Sobolev inequality,
\begin{equation} \label{eq:Lq*bdFisher}
 \|f \|_{L^{q/(2-q)}} =  \|f \|_{L^{q^\ast / 2}} \le \| f \|_{L^1}^{1/ (q^\ast - 1)} \, 
\| f \|_{L^{q^\ast}}^{(q^\ast -2)/(q^\ast - 1)}
 \le C_q \, \| f \|_{L^1}^{1/(q^\ast - 1)} \, \| \nabla f \|_{L^{q}}^{(q^\ast -2)/(q^\ast - 1)}.
\end{equation}
Gathering these two inequalities, it comes
$$
\|\nabla f\|_{L^q} \le  C_q \,  I(f)^{1/2} \, \| f \|_{L^1}^{1/ (2 (q^\ast - 1))} \, 
\| \nabla f \|_{L^{q}}^{(q^\ast -2)/(2(q^\ast - 1))},
$$
from which we easily deduce \eqref{eq:nablaLqbdFisher} using that $f\in\PPP(\rr^2)$. 

\vip

We now verify (\ref{eq:LpbdFisher}).
For $p \in [1,\infty)$, write $p = q^\ast /2  =  q/(2-q)$ with  
$q:=2p/(1+p) \in [1,2)$ and use \eqref{eq:Lq*bdFisher} and \eqref{eq:nablaLqbdFisher}:
$$
||f||_{L^p} \leq C_p ||f||_{L^1}^{1/(q^\ast-1)}  
I(f)^{(3/2-1/q)(q^\ast-2)/(q^\ast-1)},
$$
from which one easily concludes since $f\in\PPP(\rr^2)$. 
\end{proof}

As a first consequence, we deduce that pairs of particles which law has finite Fisher information are not 
too close in the following sense.

\begin{lem} \label{lem:FishInteg}
Consider $F \in \PPP(\rr^2 \times \rr^2)$ with finite Fisher information and 
$(\XX_1,\XX_2)$ a random variable with law $F$. 
Then for any $\gamma \in (0,2)$ and any $\beta>\gamma/2$ there exists $C_{\gamma,\beta}$ so that 
\beqn\label{eq:f12Fisher}
\E (|\XX_1 - \XX_2|^{-\gamma}) = \int_{\R^2 \times \R^2}\frac{F(x_1,x_2)}{|x_1-x_2|^\gamma} \, dx_1dx_2 \le
C_{\gamma,\beta} \, (I(F)^{\beta} + 1).
\eeqn
\end{lem}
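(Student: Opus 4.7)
The plan is to transfer the singular two-body weight onto the density of the difference $\XX_1-\XX_2$ and then close via the Gagliardo--Nirenberg--Sobolev-type estimate of Lemma~\ref{lem:FishInteg1}. Set $h(y) := \int_{\R^2} F(x+y,x)\, dx$, the probability density of $\XX_1-\XX_2$. The change of variables $(x_1,x_2) \mapsto (y,x) := (x_1-x_2,\,x_2)$ (Jacobian one) gives
$$\int_{\R^2 \times \R^2} \frac{F(x_1,x_2)}{|x_1-x_2|^{\gamma}}\, dx_1\, dx_2 \;=\; \int_{\R^2} \frac{h(y)}{|y|^{\gamma}}\, dy,$$
so the whole task reduces to a one-particle singular-integral estimate for $h \in \PPP(\R^2)$.

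The first main step is to show that $h$ inherits a Fisher-type bound from $F$, namely $\int |\nabla h|^2/h\, dy \le 2\, I(F)$ (recall the paper's normalisation $I(F) = \tfrac12 \int |\nabla F|^2/F\, dx_1\, dx_2$). Indeed, differentiating under the integral gives $\nabla h(y) = \int \nabla_{x_1} F(x+y,x)\, dx$, and Cauchy--Schwarz with respect to the measure $F(x+y,x)\, dx$ yields
$$\frac{|\nabla h(y)|^2}{h(y)} \;\le\; \int \frac{|\nabla_{x_1} F(x+y,x)|^2}{F(x+y,x)}\, dx;$$
integrating in $y$ and bounding $|\nabla_{x_1} F|^2 \le |\nabla F|^2$ gives the claim. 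Lemma~\ref{lem:FishInteg1} applied to $h$ then produces $\|h\|_{L^p(\R^2)} \le C_p\, I(F)^{1-1/p}$ for every $p \in [1,\infty)$.

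To exploit these bounds against the singular weight, fix $\gamma \in (0,2)$, choose any $p > 2/(2-\gamma)$ so that the conjugate exponent satisfies $\gamma p' < 2$, and split $\int h(y)/|y|^{\gamma}\, dy$ at $|y|=R$. The outer tail is at most $R^{-\gamma}$ since $h$ is a probability density, while H\"older gives
$$\int_{|y| \le R} \frac{h(y)}{|y|^{\gamma}}\, dy \;\le\; \|h\|_{L^p} \Bigl(\int_{|y|\le R}|y|^{-\gamma p'}\, dy\Bigr)^{1/p'} \;=\; C_{\gamma,p}\, \|h\|_{L^p}\, R^{2/p'-\gamma}.$$
Balancing the two contributions by the choice $R = \|h\|_{L^p}^{-p'/2}$ yields $\int h(y)/|y|^{\gamma}\, dy \le C_p\, \|h\|_{L^p}^{\gamma p'/2} \le C_p\, I(F)^{\gamma/2}$, with the exponents simplifying exactly (since $(1-1/p)\cdot\gamma p'/2 = \gamma/2$). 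The trivial inequality $I(F)^{\gamma/2} \le 1 + I(F)^{\beta}$, valid for any $\beta \ge \gamma/2$, then gives the stated bound. The only mildly subtle step is the Fisher information contraction $I(h) \le 2\, I(F)$, where one must take care to differentiate through a single slot of $F$ so that the Cauchy--Schwarz plus Fubini argument goes through cleanly; everything else is a routine H\"older split plus Lemma~\ref{lem:FishInteg1}.
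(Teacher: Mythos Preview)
Your proof is correct and follows essentially the same architecture as the paper's: pass to the one-particle density of the difference, control its Fisher information by that of $F$, invoke Lemma~\ref{lem:FishInteg1}, and close with a H\"older split of the singular integral. Two minor but pleasant deviations are worth noting. First, where the paper uses the unitary rotation $(x_1,x_2)\mapsto \frac{1}{\sqrt 2}(x_1-x_2,x_1+x_2)$ together with Carlen's super-additivity \cite{Carlen91} to obtain $I(\tilde f)\le 2I(F)$, your direct Cauchy--Schwarz argument on $h(y)=\int F(x+y,x)\,dx$ achieves the same bound without an external reference. Second, the paper splits at the fixed radius $|y|=1$ and pairs $\|\tilde f\|_{L^{1/(1-\beta)}}$ with the singularity, which forces the strict inequality $\beta>\gamma/2$; your optimisation over the cut-off radius $R$ lands exactly on the exponent $\gamma/2$ and then uses $I(F)^{\gamma/2}\le 1+I(F)^\beta$, so you in fact cover the endpoint $\beta=\gamma/2$ as well.
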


\begin{proof}  We introduce the unitary linear transformation
$$
\forall \, (x_1,x_2) \in \R^2 \quad \Phi  (x_1,x_2) =  \frac1{\sqrt 2}  
\bigl(x_1- x_2,x_1+x_2\bigr) =:   (y_1,y_2).
$$
Defining $\tilde F := F \circ \Phi^{-1}$ which is nothing but 
the law of $\frac1{\sqrt 2}  \bigl(\XX_1- \XX_2,\XX_1+\XX_2\bigr)$ and
$\tilde f$ as  the first marginal of $\tilde F$ (the law of 
$\frac1{\sqrt 2}  (\XX_1- \XX_2)$). A simple substitution shows that $I(\tilde F) = I(F)$.
Furthermore, the super-additivity property of Fisher's information proved in
\cite[Theorem 3]{Carlen91} (the factor $2$ below is due to our normalized definition 
of the Fisher information), see also 
\cite[Lemma 3.7]{hm}, implies that
\beqn\label{eq:FisherSuper}
I (\tilde f) \le 2 \,  I(\tilde F)=2 \,  I(F) .
\eeqn
Let $\beta \in (\gamma/2,1)$ be fixed 
(the case $\beta \geq 1$ will then follow immediately).
We have 
\bean
 \int_{\R^2 \times \R^2}\frac{F(x_1,x_2)}{|x_1-x_2|^\gamma} \, dx_1dx_2
 &=&  2^{\gamma/2} \int_{\R^2 \times \R^2}\frac{\tilde F(y_1,y_2)}{|y_1|^\gamma} \, dy_1dy_2
\\
&=& 2^{\gamma/2}  \int_{\R^2}\frac{\tilde f (y)}{|y|^\gamma} \, dy \\
&\le&   2^{\gamma/2}+  2^{\gamma/2} \, \int_{|y|\leq 1} \frac{\tilde f (y)}{|y|^\gamma} \, dy   . 
\eean
Using the H\"older inequality, that $\gamma / \beta <2$ and \eqref{eq:LpbdFisher}, we deduce that 
\bean
\int_{\R^2 \times \R^2}\frac{F(x_1,x_2)}{|x_1-x_2|^\gamma} \, dx_1dx_2
&\le& 2^{\gamma/2} + 2^{\gamma/2}  
\left[\int_{|y|\leq 1}|y|^{-\gamma /\beta}dx \right]^{\beta} \, \| \tilde f \|_{L^{1/(1-\beta)}(\rr^2)} \\
&\le & C_{\gamma,\beta}  ( 1 + I(\tilde f)^{\beta}).
\eean
We conclude thanks to \eqref{eq:FisherSuper}. 
\end{proof}

We also need something similar to Lemma \ref{lem:FishInteg1} that can be applied to vorticity measures.

\begin{lem} \label{lem:FishInteg2}
Consider a probability measure $g$ on $\rr\times \rr^2$ with $\supp g \subset [-A,A]\times \rr^2$
and define the probability measure $v$ and the (signed) measure $w$ on $\rr^2$ by 
$$v(B)=\int_{\rr\times B} g(dx,dm), \quad w(B)=\int_{\rr\times B} m g(dx,dm), \qquad \forall B\in\cB(\rr^2).$$
We have
\begin{align} 
\label{eq:LpbdFisher2} 
&\forall \; p\in [1,\infty), \quad  \| v \|_{L^p(\rr^2)} +
\| w \|_{L^p(\rr^2)} \leq C_{p,A} \tI(g)^{1 - 1/p},
\\
\label{eq:nablaLqbdFisher2}
&\forall \; q\in [1,2), \quad \| \nabla v \|_{L^q(\rr^2)} +
  \| \nabla w \|_{L^q(\rr^2)} 
\leq C_{q,A} \tI(g)^{ {3/2}-{1/q}}.
\end{align}
\end{lem}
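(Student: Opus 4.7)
The plan is to reduce the statement to \eqref{eq:LpbdFisher} and \eqref{eq:nablaLqbdFisher} in Lemma~\ref{lem:FishInteg1} by disintegrating $g$ with respect to its first (circulation) variable. Write $g(dm,dx) = r(dm)\, f^m(dx)$ where $r \in \PPP(\AA)$ and for each $m$, $f^m \in \PPP(\rr^2)$. Recall that by definition $\tI(g) = \int_\AA I(f^m)\, r(dm)$. Moreover, from the definitions of $v$ and $w$ we can read off the densities
\[
v(x) = \int_\AA f^m(x)\, r(dm), \qquad w(x) = \int_\AA m\, f^m(x)\, r(dm),
\]
and similarly $\nabla v(x) = \int_\AA \nabla f^m(x)\, r(dm)$, $\nabla w(x) = \int_\AA m\, \nabla f^m(x)\, r(dm)$.

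Next I would apply the Minkowski integral inequality in $L^p$ (respectively the triangle inequality for the gradient estimates) and use $|m| \le A$ on $\supp r$. For instance, for the $L^p$ bound on $w$:
\[
\|w\|_{L^p(\rr^2)} \le A \int_\AA \|f^m\|_{L^p(\rr^2)} \, r(dm) \le A\, C_p \int_\AA I(f^m)^{1-1/p} \, r(dm),
\]
by \eqref{eq:LpbdFisher}. The analogous estimates for $v$, $\nabla v$ and $\nabla w$ follow the same scheme, with exponent $3/2 - 1/q$ for the gradients (which lies in $[1/2,1)$ for $q \in [1,2)$) by \eqref{eq:nablaLqbdFisher}.

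Finally, to get $\tI(g)$ to appear with the correct exponent, I would invoke Jensen's inequality against the probability measure $r$: since $1-1/p \in [0,1]$ and $3/2-1/q \in [0,1]$, the maps $t \mapsto t^{1-1/p}$ and $t \mapsto t^{3/2-1/q}$ are concave on $[0,\infty)$, so
\[
\int_\AA I(f^m)^{1-1/p} r(dm) \le \Bigl( \int_\AA I(f^m)\, r(dm)\Bigr)^{1-1/p} = \tI(g)^{1-1/p},
\]
and likewise with $3/2-1/q$ in place of $1-1/p$. Combining these with the previous step yields \eqref{eq:LpbdFisher2} and \eqref{eq:nablaLqbdFisher2}, with constants $C_{p,A}$ and $C_{q,A}$ carrying an extra factor of $A$ (for the bounds involving $w$) and the constants $C_p$, $C_q$ from Lemma~\ref{lem:FishInteg1}.

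There is no serious obstacle; the mild one is simply to check that the required exponents of $I(f^m)$ lie in $[0,1]$ so that Jensen applies in the favorable direction. No integrability issue arises since $r$ is a probability measure supported in the bounded set $\AA$ and since, when $\tI(g) = +\infty$, the bounds are trivial.
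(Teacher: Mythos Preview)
Your proof is correct and follows essentially the same route as the paper's: disintegrate $g$, apply the $L^p$ (or $L^q$) triangle/Minkowski inequality together with $|m|\le A$ and Lemma~\ref{lem:FishInteg1}, then use Jensen for the concave map $t\mapsto t^\alpha$ with $\alpha\in[0,1]$ against $r(dm)$. The only cosmetic difference is that the paper phrases the first step as ``convexity of the $L^p$-norm'' rather than Minkowski, and does not spell out the exponent check for Jensen that you (rightly) make explicit.
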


\begin{proof}  We disintegrate $g(dm,dx)=r(dm)f^m(dx)$.
For $p \in [1,\infty)$, using the support condition on $g$, that the $L^p$-norm is convex and
(\ref{eq:LpbdFisher}) (since $f^m\in\PPP(\rr^2)$ for each $m \in \rr$), we get
\begin{align*}
|| w ||_{L^p}=\left\|\int_\rr m \, r(dm)f^m(x)\right\|_{L^p}
\leq A  \int_\rr r(dm) \|f^m\|_{L^p}
\leq A C_p  \int_\rr r(dm) I(f^m)^{1-1/p}.
\end{align*}
Since $r\in\PPP(\rr)$, the Jensen inequality leads us to
\begin{align*}
|| w ||_{L^p} \leq C_{p,A} \left(\int_\rr r(dm) I(f^m) \right)^{1-1/p} = C_{p,A} \tI(g)^{1-1/p}.
\end{align*}
The same proof works for $v$. Finally, \eqref{eq:nablaLqbdFisher2} is shown similarly, 
using \eqref{eq:nablaLqbdFisher} instead of \eqref{eq:LpbdFisher}.
\end{proof}

We end this section with some easy functional estimates.

\begin{lem}\label{efe}
Let $(w_t)_{t\geq 0} \in C([0,\infty),\MMM(\R^2))$ satisfy (\ref{cfish2}).
Then 
\beqn
\forall \;T>0, \quad \forall \; p\in (1,\infty), \quad w \in L^{p/(p-1)}([0,T], L^p(\R^2))
\eeqn
and
\beqn
\forall \;T>0, \quad \forall \; \gamma \in (0,2), \quad 
\int_0^T \int_{\R^2}\int_{\R^2} |x-y|^{-\gamma} |w_s(x)||w_s(y)| dydxds <\infty.
\eeqn
\end{lem}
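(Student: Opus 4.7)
My plan is to deduce both estimates from the classical Sobolev embedding on $\R^2$ together with the Hardy--Littlewood--Sobolev inequality. The exponent $2q/(3q-2)$ in \eqref{cfish2} is exactly tailored for this.

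First, fix $q\in[1,2)$ and set $q^\ast := 2q/(2-q)\in[2,\infty)$. A short computation shows that the temporal exponent satisfies $2q/(3q-2) = q^\ast/(q^\ast-1)$. The Sobolev inequality on $\R^2$ gives $\|w_t\|_{L^{q^\ast}(\R^2)} \le C_q \|\nabla w_t\|_{L^q(\R^2)}$ for a.e.\ $t\in[0,T]$ (one uses that $w_t$, being a finite Radon measure whose distributional gradient lies in $L^q$, is necessarily a function vanishing at infinity in the appropriate sense). Raising to the power $q^\ast/(q^\ast-1)$ and integrating in time, \eqref{cfish2} immediately yields $w\in L^{p/(p-1)}([0,T],L^p(\R^2))$ for all $p=q^\ast\in[2,\infty)$.

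For the remaining range $p\in(1,2)$, I interpolate: H\"older gives $\|w_t\|_{L^p}\le \|w_t\|_{L^1}^{(2-p)/p}\|w_t\|_{L^2}^{2(p-1)/p}$. Since $w\in C([0,T],\MMM(\R^2))$ implies $\sup_{[0,T]}\|w_t\|_{L^1}<\infty$ (total variation is continuous hence bounded), and the previous step already gives $w\in L^2([0,T],L^2(\R^2))$, a direct computation of exponents shows $\int_0^T\|w_t\|_{L^p}^{p/(p-1)}\,dt<\infty$. This proves the first assertion.

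For the second assertion, given $\gamma\in(0,2)$, set $p:=4/(4-\gamma)\in(1,2)$. The Hardy--Littlewood--Sobolev inequality (with the symmetric choice $1/p+1/p+\gamma/2=2$) gives, pointwise in $s$,
\begin{equation*}
\int_{\R^2}\int_{\R^2}\frac{|w_s(x)||w_s(y)|}{|x-y|^\gamma}\,dy\,dx \le C_\gamma\,\|w_s\|_{L^p(\R^2)}^2.
\end{equation*}
Since $p<2$ we have $p/(p-1)>2$, so on the finite interval $[0,T]$ one has the inclusion $L^{p/(p-1)}([0,T])\hookrightarrow L^2([0,T])$ by H\"older, and combining this with the first assertion yields $\int_0^T\|w_s\|_{L^p}^2\,ds<\infty$, which is the desired finiteness. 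The main technical point is not deep: it is essentially the bookkeeping of exponents together with the (standard but worth mentioning) fact that $w_t$ is genuinely a function for a.e.\ $t$, so that the $L^p$-norms and HLS inequality apply.
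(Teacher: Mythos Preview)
Your proof is correct, but both parts are handled somewhat differently than in the paper, so a brief comparison is worthwhile.

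For the first assertion, the paper invokes its interpolated Gagliardo--Nirenberg--Sobolev inequality \eqref{eq:Lq*bdFisher}, namely $\|f\|_{L^{q^\ast/2}} \le C_q \|f\|_{L^1}^{1/(q^\ast-1)}\|\nabla f\|_{L^q}^{(q^\ast-2)/(q^\ast-1)}$, with the choice $q = 2p/(1+p)$ (so $q^\ast/2 = p$), and checks that the resulting time exponent on $\|\nabla w_t\|_{L^q}$ matches $2q/(3q-2)$; this covers all $p\in(1,\infty)$ in one stroke. Your route---pure Sobolev $\|w_t\|_{L^{q^\ast}}\le C_q\|\nabla w_t\|_{L^q}$ for $p=q^\ast\ge 2$, then $L^1$--$L^2$ interpolation for $p\in(1,2)$---is equally valid and arguably more transparent, though it requires splitting into two ranges. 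The exponent identity $2q/(3q-2)=q^\ast/(q^\ast-1)$ that you spotted is exactly what makes the first step work.

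For the second assertion, the paper does \emph{not} use Hardy--Littlewood--Sobolev: it splits the double integral into $\{|x-y|\ge 1\}$ (bounded by $\|w_s\|_{L^1}^2$) and $\{|x-y|<1\}$, and on the latter applies H\"older with some $p>2/(2-\gamma)$ to get a bound $C_{\gamma,p}\|w_s\|_{L^1}\|w_s\|_{L^p}$, then integrates in time using the first part. Your HLS argument with the self-dual exponent $p=4/(4-\gamma)$ is cleaner and avoids any splitting; the subsequent embedding $L^{p/(p-1)}([0,T])\hookrightarrow L^2([0,T])$ for $p<2$ closes the argument efficiently. Both approaches ultimately rest on the same integrability of $w$ in $L^{p/(p-1)}_t L^p_x$, so neither is deeper than the other---yours is just more streamlined for this particular estimate.
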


\begin{proof}
The first estimate follows from (\ref{eq:Lq*bdFisher}) (applied with $q=2p/(1+p)$ and thus 
$q^\ast=2p$)
and the fact that $w \in L^\infty(0,T;L^1(\R^2))$
(because $w \in C([0,\infty),\MMM(\R^2))$).

To check the second estimate, consider $p>2/(2-\gamma)$ and observe that for any $x\in\R^2$,
by the H\"older inequality, since  $\gamma p/(p-1) < 2$,
$$
\int_{\{|x-y|\leq 1\}} |x-y|^{-\gamma}|w_s(y)|dy \leq \|w_s\|_{L^p}
\left( \int_{\{|x-y|\leq 1\}} |x-y|^{-\gamma p/(p-1)}dy\right)^{1-1/p} \leq C_{\gamma,p}  \|w_s\|_{L^p}.
$$
Consequently, 
\bean
\int_{\R^2}\int_{\R^2} |x-y|^{-\gamma} |w_s(x)||w_s(y)| dydx 
&\leq & \int_{\{|x-y|\geq 1\}}\int_{\R^2} |w_s(x)||w_s(y)| dydx \\
&&+ \int_{\{|x-y|< 1\}} |x-y|^{-\gamma} |w_s(x)||w_s(y)| dydx \\
&\leq & \|w_s\|_{L^1}^2  + C_{\gamma,p} \|w_s\|_{L^p} \|w_s\|_{L^1}.
\eean
We easily conclude using that  $w \in L^\infty(0,T;L^1(\R^2)) \cap  L^{p/(p-1)}([0,T], L^p(\R^2))$.
\end{proof}

\section{Many-particle entropy and Fisher information } \label{sec:prel2}
\setcounter{equation}{0}

We will need a result showing that if the particle distribution of the $N$-particle system has 
a uniformly bounded entropy and Fisher information, then any limit point of the associated empirical
measure has finite entropy and Fisher information.
As we will see, such a result is a consequence of representation identities for {\it level-3 functionals}
as first proven by Robinson and Ruelle in \cite{RR} for the entropy in a somewhat different setting. 
Recently in \cite{hm}, that kind of representation identity has been extended to the 
Fisher information. The proof is mainly based on the
De Finetti-Hewitt-Savage representation theorem~\cite{Hewitt-Savage,deFinetti1937}
(see also \cite{hm} and the references therein) together with 
convexity tricks for the entropy, and concentration for the Fisher
information. Unfortunately, we cannot apply directly the result of \cite{hm}
due to the additional variable corresponding to the circulations of vortices.
But the result still holds true and will be stated  in the next theorem after some
necessary definitions.

\vip

For a given $r \in \PPP(\R)$, we define $\Ee_N(r)$ as the set of probability measures 
$G^N \in \Psym( (\rr\times\rr^2)^N)$ such that $\int_{(\R^2)^N} 
G^N(dm_1,dx_1,\dots,dm_N,dx_N)= r^{\otimes N}(dm_1,\dots,dm_N)$.
We also denote by $\Ee_\infty(r)$ the set of probability measures $\pi \in \PPP(\PPP(\R \times \R^2))$ 
supported in $\{g \in \PPP(\R \times \R^2)\, : \, \int_{x\in\rr^2} g(dm,dx)=r(dm)\}$.
In addition, $\PPP_k(\PPP(\rr\times\rr^2))$ will denote the set of probabilities measure 
$\pi$ with finite moment 
$\tilde M_k( \pi) := \int_{\PPP(\rr\times\rr^2)} \tilde  M_k(g) \pi(dg) = \tMk(\pi_1)$, where
$\pi_1 := \int_{\PPP(\rr\times\rr^2)}g \ \pi(dg)\in\PPP(\rr\times\rr^2)$. 

\begin{thm}\label{th:levl3tH&tI} 
Let $k>0$ and $r\in \PPP(\R)$ supported in $\AA=[-A,A]$ for some $A>0$.
Consider, for each $N\geq 2$,  a probability measure $G^N \in \Ee_N(r)$. 
For $j\geq 1$, denote by $G^N_j\in \PPP((\R\times\R^2)^j)$ the $j$-th marginal of $G^N$.
Assume that $\sup_N \tMk(G^N_1) <\infty$ and that
there exists a compatible sequence $(\pi_j)$ of symmetric probability
measures on $(\rr\times\rr^2)^j$ so that $G^N_j \to \pi_j$ in the weak sense of measures in 
$\PPP((\rr\times\rr^2)^j)$. 
Denoting by $\pi \in \PPP_k(\PPP(\R \times \R^2))$ the probability measure associated to the 
sequence $(\pi_j)$ thanks
to the De Finetti-Hewitt-Savage theorem,  there holds 
\beqn\label{eq:level3H}
\int_{\PPP(\R \times \R^2)} \tilde H (g) \, \pi(dg) = \sup_{j \ge 1}Ê\tilde H(\pi_j) \leq \liminf_{N \to \infty} \tH (G^N),
\eeqn
as well as
\beqn\label{eq:level3I}
\int_{\PPP(\R \times \R^2)} \tilde I (g) \, \pi(dg)   = \sup_{j \ge 1}Ê\tilde I(\pi_j)
 \leq \liminf_{N \to \infty} \tI (G^N).
\eeqn
\end{thm}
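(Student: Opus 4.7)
The plan is to extend to the partial functionals $\tH$ and $\tI$ the level-3 representation identities known for the standard Boltzmann entropy $H$ (Robinson--Ruelle \cite{RR}) and for the Fisher information $I$ (proved in \cite{hm}). A direct appeal to \cite{hm} is blocked by the presence of the circulation coordinates, whose marginal is fixed at $r^{\otimes N}$ but may be singular; the underlying mechanisms (convexity and tensor affineness for the entropy, convexity and concentration for the Fisher information) nevertheless survive fiber-wise. First, viewing $E := \R \times \R^2$ as a single Polish state space, the De Finetti--Hewitt--Savage theorem applied to the symmetric compatible sequence $(\pi_j)_j$ produces $\pi \in \PPP(\PPP(E))$ with $\pi_j = \int g^{\otimes j}\,\pi(dg)$ for every $j \ge 1$. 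The constraint $G^N \in \Ee_N(r)$ passes to the limit and forces $\pi$ to be supported in $\Ee_\infty(r)$, while the moment bound $\sup_N \tMk(G^N_1) < \infty$ transfers by weak lower semi-continuity of the moment functional, so that $\pi \in \PPP_k(\PPP(E))$.

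For the entropy identity \eqref{eq:level3H}, I would use the decomposition $\tH(G) = H(G) - H(R)$ valid whenever the full entropy is finite (the formula displayed just after the definition of $\tH$). Applied to $G^N \in \Ee_N(r)$, to each marginal $\pi_j$, and $\pi$-a.e. to $g \in \Ee_\infty(r)$, and using $H(r^{\otimes N}) = H(r)$ coming from the $1/N$ normalization, this yields
\[
\tH(G^N) = H(G^N) - H(r), \qquad \tH(\pi_j) = H(\pi_j) - H(r), \qquad \tH(g) = H(g) - H(r).
\]
Substitution reduces \eqref{eq:level3H} to the corresponding representation and $\Gamma$-lower semi-continuity bound for $H$ on symmetric probabilities on $E^N$, both proven in \cite{hm}.

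For the Fisher information identity \eqref{eq:level3I} no such reduction is available since $r$ need not have a density, and I would mimic the scheme of \cite{hm} fiber-wise. \emph{Step (a), tensor affineness on $\Ee_\infty(r)$:} for $g$ with disintegration $g(dm, dx) = r(dm) f^m(dx)$, the conditional density of $g^{\otimes j}$ given $M$ is $\prod_i f^{m_i}$, whose normalized Fisher information equals $j^{-1} \sum_i I(f^{m_i})$; integrating against $r^{\otimes j}$ gives $\tI(g^{\otimes j}) = \tI(g)$. \emph{Step (b), convexity:} the conditional density of $\pi_j$ given $M$ is the mixture $\int \prod_i f_g^{m_i}\,\pi(dg)$, and convexity of the standard Fisher information yields $\tI(\pi_j) \le \int \tI(g)\,\pi(dg)$, together with monotonicity of $j \mapsto \tI(\pi_j)$ via a Han-type inequality. \emph{Step (c), concentration:} the level-3 argument of \cite{hm}---that an empirical Fisher information built from iid samples under $\pi$ converges to $\int I(g)\,\pi(dg)$---carried out conditionally on circulations upgrades (b) to $\sup_j \tI(\pi_j) = \int \tI(g)\,\pi(dg)$. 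Finally, a Han-type inequality $\tI(G^N_j) \le \tI(G^N)$ (inherited from the symmetric-measure Fisher information) combined with $\Gamma$-lower semi-continuity of $\tI$ under weak convergence with bounded moment yields $\sup_j \tI(\pi_j) \le \liminf_N \tI(G^N)$.

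The main obstacle is Step (c). In \cite{hm} it relies on controlling the Fisher information of a mixture of product measures via a careful almost-sure empirical-measure convergence argument. The adaptation to $\tI$ requires only that this argument be performed conditionally on circulations, which are iid under $r$ and enter purely parametrically; once the outer $r^{\otimes j}$-integration is isolated, the fiber-wise structure is identical to the unconditional setting treated in \cite{hm}.
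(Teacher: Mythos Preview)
Your treatment of $\tI$ is essentially the paper's approach: both isolate the $r^{\otimes j}$ integration and run the \cite{hm} machinery fiber-wise, the paper organizing this as a verification of abstract axioms (convexity, lower semi-continuity, tensor affineness, super-additivity, and level-3 affineness) that feed into \cite[Lemma~5.6]{hm}, while you phrase the same ingredients as Steps~(a)--(c).

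There is, however, a genuine gap in your reduction for $\tH$. The identity $\tH(G)=H(G)-H(r)$ is only meaningful when the \emph{full} entropy $H(G)$ is finite, hence when $G$ has a density on $(\R\times\R^2)^N$. But the hypothesis $G^N\in\Ee_N(r)$ places no absolute-continuity constraint on $r$; indeed the most natural examples in the paper have $r$ purely atomic (e.g.\ all circulations equal, or $r$ supported on two points as in the explicit construction of $g_0$ from $w_0$). In that regime $H(r)=+\infty$ and $H(G^N)=+\infty$ simultaneously, so your displayed formulas $\tH(G^N)=H(G^N)-H(r)$, $\tH(\pi_j)=H(\pi_j)-H(r)$, $\tH(g)=H(g)-H(r)$ all read $\infty-\infty$ and the reduction to the known level-3 result for $H$ collapses. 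The paper avoids this by treating $\tH$ exactly as it treats $\tI$: it works directly with the fiber-wise definition $\tH(G)=\int_{\R^N} H(F^M)\,R(dM)$, proves the same list of structural properties (variational formula giving convexity and lower semi-continuity, super-additivity $j\tH(G)\ge \ell\tH(G_\ell)+n\tH(G_n)$ via relative entropy positivity fiber by fiber, and level-3 affineness via the $\frac1j[\theta\log\theta+(1-\theta)\log(1-\theta)]$ trick), and then invokes the abstract \cite[Lemma~5.6]{hm}. You should do the same for $\tH$ rather than attempt the subtraction shortcut.
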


The De Finetti-Hewitt-Savage theorem asserts that for a 
sequence $(\pi_j)$ of symmetric probability on $E^j$, compatible in the sense that 
the $k$-th marginal of $\pi_j$ is $\pi_k$ for all $1\leq k \leq j$, there exists a unique probability measure
$\pi \in \PPP(\PPP(E))$ such that $\pi_j= \int_{\PPP(E)} g^{\otimes j} \pi(dg)$. See for instance 
\cite[Theorem 5.1]{hm}.

\vip

Theorem~\ref{th:levl3tH&tI} is an immediate consequence of  \cite[Lemma 5.6]{hm}  and of the 
series of properties on the  partial entropy and Fisher information functionals that we state in the 
following lemma. 

\begin{lem}\label{lem:propHIJ} Let $k>0$ and $r\in \PPP(\R)$ supported in $\AA=[-A,A]$ for some $A>0$.
The partial entropy and Fisher information functionals
satisfy, with the common 
notation 
$\tJ = \tH$ or $\tI$, $\tJJ = \tHH$ or $\tII$, the
following properties

(i1) For any $j\geq 1$, $\tI : \PPP ( (\AA \times \R^2)^j) \to  \R \cup
\{+\infty \}$ is non-negative,
convex, proper and lower semi-continuous for the weak convergence.

(i2) For any $j\geq 1$, $\tH : \PPP_k( (\AA \times \R^2)^j) \to  \R \cup
\{+\infty \}$ is 
convex, proper, lower semi-continuous for the weak convergence and there exists some constant 
$C_{k} \in \R$ such that 
$$
 \PPP( (\AA \times \R^2)^j) \to  \R \cup
\{+\infty \}, \quad 
G \mapsto \tH (G) + \tilde M_{k/2}(G) + C_{k}
$$
is  lower semi-continuous for the weak convergence and non-negative.

(ii) For all $j\geq 1$, all $g \in \PPP_k(\AA \times \R^2)$, $\tJ (g^{\otimes j}) = \tJ (g)$.

(iii) For all $G \in \PPP_k( (\AA \times \R^2)^j)$, all $\ell,n$ with 
$j = \ell + n$ , there holds
$j \, \tJ (G) \ge \ell \, \tJ(G_\ell) + n \, \tJ(G_n)$, where $G_\ell\in \PPP_k( (\AA \times \R^2)^\ell)$ 
stands for the $\ell$-marginal of $G$.

(iv) 
The functional $\tJJ' : \PPP_k(\PPP(\AA\times\rr^2))\cap \Ee_\infty(r) \to \R \cup \{ \infty \}$ 
defined by 
$$
\tJJ' (\pi) := \sup_{j \ge 1} \tJ(\pi_j) \quad \hbox{where} \quad \pi_j:=\int_{\PPP(\AA\times\rr^2)}g^{\otimes j}
\pi(dg)
$$
is affine in the following sense. For any  $\pi \in
\PPP_k(\PPP(\AA\times\rr^2))$
and any partition of $\PPP_k(\AA\times\rr^2)$ by some sets $\omega_i$, $1\le i \le M$, such that
$\omega_i$ is an open set in $(\AA\times\rr^2) \backslash (\omega_1 \cup \ldots \cup
\omega_{i-1})$ for any $1\le i \le M-1$ and $\pi(\omega_i) > 0$ for any 
$1\le i \le M$, defining 
$$
\alpha_i := \pi(\omega_i) \quad \hbox{and} \quad \gamma^i := \frac{1}{\alpha_i} \, 
{\bf 1}_{\omega_i} \, \pi \in \PPP_k(\PPP(\AA\times\rr^2))
$$
so that 
$$
\pi = \alpha_1 \, \gamma^1 + ... + \alpha_M \, \gamma^M
\quad \hbox{and} \quad \alpha_1   + ... + \alpha_M =1,
$$
there  holds
$$
\JJ'(\pi) = \alpha_1 \, \JJ'(\gamma^1) + \ldots + \alpha_M \, \JJ'(\gamma^M).
$$
\end{lem}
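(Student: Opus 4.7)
Plan of proof. The five items are the partial versions, conditional on the circulations $M$, of standard properties of the Boltzmann entropy $H$ and Fisher information $I$ already established in \cite{hm}. The main technical device is the disintegration $G(dM,dX)=R(dM)F^M(dX)$, together with the two identities valid when $G$ has a density,
\[
\tI(G)=\frac{1}{N}\int \frac{|\nabla_X G|^2}{G}\,dM\,dX,\qquad
\tH(G)=\frac{1}{N}\bigl(H^{\mathrm{un}}(G)-H^{\mathrm{un}}(R)\bigr),
\]
where $H^{\mathrm{un}}(F):=\int F\log F$. The second rewrites as $N\tH(G)=\mathrm{KL}(G\,\|\,R\otimes dX)$, a relative entropy whose reference measure $R\otimes dX$ depends linearly on $G$.

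For (i1) and (i2), convexity of $\tI$ follows from joint convexity of $(u,v)\mapsto |u|^2/v$ on $\R^d\times(0,\infty)$ applied with $u=\nabla_X G$ and $v=G$ (both linear in $G$); weak lower semicontinuity follows from the dual representation $|u|^2/v=\sup_\phi\{2\phi\cdot u-v|\phi|^2\}$, which exhibits $\tI$ as a supremum of continuous linear functionals of $G$. Convexity and weak lower semicontinuity of $\tH$ follow from the corresponding joint properties of the Kullback--Leibler divergence in its two arguments. The regularized non-negativity in (i2) comes from applying Lemma~\ref{ieth} (with parameter $k/2$ in place of $k$ and $\lambda=1$) to each $F^M$ and integrating against $R(dM)$.

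For (ii) and (iii), tensorization is a direct computation: with $g=r\otimes f^m$, the disintegration of $g^{\otimes j}$ is $r^{\otimes j}\otimes \bigotimes_i f^{m_i}$, and the classical additivity identities for $H^{\mathrm{un}}$ and $I^{\mathrm{un}}$ on tensor products, together with the $1/j$ normalization, yield $\tJ(g^{\otimes j})=\int r(dm)\,J(f^m)=\tJ(g)$. For super-additivity (iii), Shannon sub-additivity for $H^{\mathrm{un}}$ and Carlen's super-additivity for $I^{\mathrm{un}}$ (\cite[Theorem~3]{Carlen91}) applied to $F^M$ for fixed $M$ control $H^{\mathrm{un}}(F^M)$ and $I^{\mathrm{un}}(F^M)$ in terms of the marginals of $F^M$ on the first $\ell$ and last $n$ positions. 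A further Jensen step, using convexity of $H^{\mathrm{un}}$ and $I^{\mathrm{un}}$ together with the fact that the conditional density of $X_\ell$ given $M_\ell$ under $G_\ell$ is a mixture (over the integrated-out circulations $M_n$) of the conditional densities under $G$, yields (iii) after integration against $R(dM)$.

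Item (iv) is the main difficulty and is the partial analogue of \cite[Theorem~5.7(iv)]{hm}. The easy inequality $\tJJ'(\pi)\leq \sum_i\alpha_i\tJJ'(\gamma^i)$ follows from $\pi_j=\sum_i\alpha_i\gamma^i_j$ and the convexity established in (i1)/(i2): $\tJ(\pi_j)\leq \sum_i\alpha_i\tJ(\gamma^i_j)\leq \sum_i\alpha_i\tJJ'(\gamma^i)$, independently of $j$. The reverse inequality is the delicate one. Since the $\omega_i$ form a partition of $\PPP_k(\AA\times\R^2)$ and since under $g^{\otimes j}$ the empirical measure $j^{-1}\sum_k\delta_{(m_k,x_k)}$ concentrates on $g$ as $j\to\infty$ by the law of large numbers in $\PPP(\AA\times\R^2)$, the measures $\gamma^i_j$, for different $i$, are asymptotically mutually singular in the sense that their supports are separated by the empirical-measure map $(\AA\times\R^2)^j\to\PPP(\AA\times\R^2)$. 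Combining this asymptotic disjointness with tensorization (ii) and super-additivity (iii) converts $\sup_j\tJ(\pi_j)$ into $\sum_i\alpha_i\tJJ'(\gamma^i)$, following the strategy of \cite{hm}. The restriction $\pi\in\Ee_\infty(r)$ (equivalently $G^N\in\Ee_N(r)$ at the finite level) is essential here: it makes the $H^{\mathrm{un}}(R)$ contribution to $\tH$ a fixed constant across all pieces, so that the $\tH$-decomposition reduces to the $H$-decomposition proved in \cite{hm}; the $\tI$-decomposition is automatic since $\tI$ only involves gradients in the position variables. With this reduction, the argument carries over with only minor modifications.
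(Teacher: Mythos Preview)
Your outline is broadly correct and follows the same strategy as the paper: adapt the properties established in \cite{hm} for $H$ and $I$ to the partial functionals $\tH$, $\tI$ by working conditionally on the circulation variables. Items (i)--(iii) are handled essentially as you describe, with one minor difference: for (iii) the paper does not go through Carlen/Shannon for $F^M$ followed by a Jensen step over the hidden circulations, but computes directly the relative entropy of $F_j^M$ with respect to $F_\ell^{M_\ell}\otimes F_{n}^{M_{n}}$ (respectively, restricts the variational formula for $\tI$ to separated test functions), which avoids your mixing argument altogether.

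The place where your sketch diverges from the paper, and where it is too thin, is (iv). For the \emph{entropy}, the paper does \emph{not} invoke asymptotic mutual singularity or the law of large numbers at all. It uses the elementary two-sided bound
\[
\theta\,\tH(A_j)+(1-\theta)\,\tH(B_j)+\tfrac{1}{j}\bigl[\theta\log\theta+(1-\theta)\log(1-\theta)\bigr]
\;\le\;\tH\bigl(\theta A_j+(1-\theta)B_j\bigr)
\;\le\;\theta\,\tH(A_j)+(1-\theta)\,\tH(B_j),
\]
obtained from the monotonicity of $\log$ and the convexity of $s\log s$ applied to the conditional densities $\alpha_j,\beta_j$ (with common $R_j=r^{\otimes j}$, which is where $\Ee_\infty(r)$ enters). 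Letting $j\to\infty$ along $\tJJ'(\pi)=\lim_j\tJ(\pi_j)$ gives affineness directly.

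For the \emph{Fisher information}, your ``asymptotic disjointness via empirical measures'' is indeed the heart of the argument, but you omit a step that is not optional: one must first regularize by convolution in the $X$-variables so that $|\nabla_1\log\alpha_j|$ and $|\nabla_1\log\beta_j|$ are uniformly bounded. Only then can the exact expression
\[
Z_j=\theta(1-\theta)\int R_j\,\frac{\alpha_j\beta_j}{(1-\theta)\alpha_j+\theta\beta_j}\,
\Bigl|\nabla_1\log\tfrac{\alpha_j}{\beta_j}\Bigr|^2
\]
be controlled by the mass of $A_j'$ and $B_j$ on the ``wrong'' side of an empirical-measure level set $\tilde B_u$, after which the concentration estimate of \cite{hm} gives $Z_j\to 0$. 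Your phrase ``the $\tI$-decomposition is automatic'' is therefore misleading: the computation and the concentration argument still have to be carried out (parametrized by $M$ and integrated against $r^{\otimes j}$), and the regularization is what makes the computation legitimate. Your reduction-to-\cite{hm} idea via the common marginal $r^{\otimes j}$ is morally correct, but it is a parametrized rerun of that proof rather than a black-box application, and it does not survive without the smoothing step.
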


 \begin{preuve} {\it of Lemma~\ref{lem:propHIJ}.} We only sketch the proof,
which is roughly an adaptation to the partial case of the proofs of~\cite[Lemma
5.5]{hm} and~\cite[Lemma 5.10]{hm}.

\vip

{\it Step 1. } We first prove point (i).

Let us first present alternative expressions of the 
entropy and the Fisher information.
For $G \in \PPP_k((\AA \times \R^2)^j)$, it holds that 
\bear\label{def1EntropB}
\tilde H(G) &=& 
\frac 1 j   \sup_{\phi \in C_c((\AA \times \R^2)^j )} \Bigl\langle G, \phi - H^*
(\phi ) + \log \theta^j \Bigr\rangle
\eear
with $\theta^j (M,X):= \theta^j(X) = c^j \, \exp (- |x_1|^k - ... - |x_j|^k)$, where 
$c$ chosen so that $\theta^j$ is a probability, where
$$
H^* (\phi) (M) :=  \int_{\R^{2j} }h^*(\phi(M,X)) \, \theta^j(X)  \, dX
$$
and where $h^*(t) := e^t - 1$ is the Legendre transform of $h(s) := s \, \log s - s + 1$.
The RHS term is well-defined in  $\R \cup \{+\infty \}$ because the function $\phi - H^*
(\phi ) + \log \theta^j$ is continuous and bounded by $- C \, \langle X \rangle^k$ by below for any 
$\phi \in C_c((\AA \times \R^2)^j )$. 

We also have, for  $G \in \PPP( (\AA \times \R^2)^j)$,
\beqn\label{def:Fisher2}
\tI (G)  :=  \frac1j\sup_{\psi   \in C_c^1( (\AA \times \R^2)^j)^{2j}} 
\Bigl  \langle G,  - \frac{| \psi |^2 } 4  - \hbox{div}_X\, \psi \Bigr\rangle.
\eeqn
Again, the RHS term is well defined in  $\R$ because the function $| \psi |^2 / 4  - \hbox{div}_X\, 
\psi $ is continuous and
bounded for any $\psi   \in C_c^1( (\AA \times \R^2)^j)^{2j}$. 

\vip
As a consequence of the representation formulas \eqref{def1EntropB} and \eqref{def:Fisher2} we immediately  
conclude that $\tilde H$ and $\tilde I$ are convex, lower semi-continuous 
and proper so that point (i1) and the first part of (i2) hold. The lower bound expressed in (i2) 
is nothing but 
the result stated in Lemma~\ref{ieth}. 

\vip

{\it Step 2.} Point (ii) is obvious from (\ref{dfth})-(\ref{dfti}).

\vip

{\it Step 3. } We now prove (iii). For the partial 
entropy, we define
$$
\tilde h_i := i \,  \tilde H(G_i)
$$
for any $G \in \PPP_k((\AA \times \R^2)^j)$ and $1 \le i \le j$, and we just
write $G_i = R_i \, F_i$ 
instead of 
the more explicit expression $G_i(dM,dX) = R_i(dM) \, F_i^M(X) \, dX$. We then
compute
\bean
\tilde h_j - \tilde h_i - \tilde h_{j-i} 
&=& \int_{(\AA\times \R^2)^j} G_j \log F_j -  \int_{(\AA\times \R^2)^j} G_i \log F_i   - 
\int_{(\AA\times \R^2)^{j-i}} \!\!\! G_{j-i} \log F_{j-i} 
\\
&=& \int_{(\AA\times \R^2)^j} G_j  \, [\log F_j  -   \log F_i \otimes  F_{j-i} ] 
\\
&=& \int_{\AA^j} R_j \int_{(\R^2)^{j}} F_j   \, [\log F_j  -   \log F_i \otimes 
F_{j-i} ]
\\
&=& \int_{\AA^j} R_j \int_{(\R^2)^j}F_i \otimes  F_{j-i} \, [ u \, \log u - u +1
] \ge 0,
\eean
where we have set $u := F_j/(F_i \otimes  F_{j-i})$ and we have used that $F_j, 
F_i \otimes  F_{j-i} \in 
\PPP((\R^2)^{j})$
for any given $M \in \R^j$. 

\vip
For the partial Fisher information we reproduce the proof of the same
super-additivity property established
for the usual Fisher information in \cite[Lemma~3.7]{hm}. We define for any $i \le j$
$$
\tilde \iota_i  := i \, \tilde I (G_i) =    \sup_{\psi   \in C_c^1( (\AA \times \R^2)^i)^{2i}} 
\int_{(\AA \times \R^2)^i }   \Bigl( \nabla_X \, G_i \cdot \psi  -
G_i \, \frac{|\psi|^2}4  \Bigr)
$$
where the sup is taken on all $\psi =(\psi_1,\ldots,\psi_i)$, with all 
$\psi_\ell :  (\AA \times \R^2)^i \to \R^2$. 
We then write the previous equality for $\iota_j$ and restrict the supremum over all $\psi$ such that for some $i \le j$:

$\bullet$ the $i$ first $\psi_\ell$ depend only on $(x_1,\ldots,x_i)$, with the
notation $\psi^i = (\psi_1, \ldots,\psi_i)$,

$\bullet$ the $j-i$ last $\psi_\ell$ depend only on $(x_{i+1},\ldots,x_j)$,
with the notation $\psi^{j-i} = (\psi_{i+1}, \ldots,\psi_j)$.

\noindent
We then have the inequality
\bean
\tilde \iota_j& \geq &  
\sup_{\psi^i ,\, \psi^{j-i} } 
\int_{(\AA \times \R^2)^j }   [\nabla_i G  \cdot \psi^i + \nabla_{j-i} G \cdot
\psi^{j-i} - G \, \frac{|\psi^i|^2+ |\psi^{j-i}|^2}4 ] \\
& = &  \sup_{\psi^i   \in C_c^1( (\AA \times \R^2)^i)^{2i}} 
\int_{(\AA \times \R^2)^i }   [\nabla_i G_i  \cdot \psi^i  - G_i \, \frac{|\psi^i|^2}4 ]
\\ 
&& \hspace{2cm} + 
\sup_{\psi^{j-i}   \in C_c^1( (\AA \times \R^2)^{j-i})^{2(j-i)}} 
\int_{(\AA \times \R^2)^{j-i} }   [\nabla_{j-i} G_{j-i}  \cdot \psi^{j-i}  - G_{j-i} \, \frac{|\psi^{j-i}|^2}4 ]
\\
& = & \tilde \iota_i + \tilde \iota_{j-i},
\eean
where all the gradients appearing are only gradients in the $X$ variables.

\vip

{\it Step 3. } We first note that as a consequence of (iii) we have (see \cite[Lemma 5.5]{hm} 
for details) for any 
$ \pi \in  \PPP_k(\PPP(\AA \times \R^2))$
\beqn\label{eq:JJprime=limit}
\tilde \JJ'(\pi) := \sup_{j \ge 1} \tilde J(\pi_j) = \lim_{j \to \infty}\tilde J(\pi_j).
\eeqn
We now prove the affine caracter (iv) for the partial entropy  $\tilde \HH'$,
considering only the case $M=2$ for simplicity.
Let us consider $A, B \in \PPP_k(\PPP(\AA \times \R^2))\cap \Ee_\infty(r)$, $\theta \in (0,1)$, 
and let us introduce the disintegration   $A_j = R_j \alpha_j$, $B_j = R_j \beta_j$,
with $R_j=r^{\otimes j}$ (because both $A$ and $B$ belong to $\Ee_\infty(r)$).
Using that  $s \mapsto \log s$ is an increasing function and that $s \mapsto s \, \log s$ is a convex 
function, we have 
\bean
\tilde H (\theta \, A_j + (1-\theta) \, B_j) 
&=&  \frac1j \int_{(\AA \times \R^2)^j} R_j \,  (\theta \,  \alpha_j + (1-\theta) \, \beta_j)  \, 
\log (\theta \,  \alpha_j + (1-\theta) \, \beta_j) \\
&\ge& \frac1j \int_{(\AA \times \R^2)^j} R_j \, \{\theta \,  \alpha_j \, \log (\theta \,  \alpha_j ) + 
(1-\theta) \, 
\beta_j  \, \log ( (1-\theta) \, \beta_j) \}\\
&=& \theta \, \tilde H ( A_j) + (1-\theta) \, \tilde  H(B_j) + \frac1j [ \theta  \, \log \theta + 
(1-\theta)  \, \log (1-\theta)] \\
&\ge& \tilde  H( \theta \,   A_j +  (1-\theta) \, B_j) +  \frac1j [ \theta  \, \log \theta + 
(1-\theta)  \, \log (1-\theta)].
\eean
Passing to the limit $j\to\infty$ in the two preceding inequalities and using  \eqref{eq:JJprime=limit}, 
we get 
$$
\tilde \HH' (\theta \,  A + (1-\theta) \, B) \ge \theta \, \tilde \HH' ( A) + 
(1-\theta) \, \tilde \HH' (B) \ge \tilde \HH' (\theta \,  A + (1-\theta) \, B),
$$
from which the announced affine caracter follows.

\vip

We next prove the affine caracter (iv) for the partial Fisher information. 
For the sake of simplicity we only consider the case when $M=2$ and $\omega_1$
is a ball. The case when 
$\omega_1$ is a general open set can be handled in a similar way and the case
when $M \ge 3$ can be deduced 
by an iterative argument. For some given $\pi \in \PPP_k(\PPP(\rr \times
\rr^2))\cap \Ee_\infty(r)$ which is not a Dirac mass, some
$f_1 \in \PPP_k(\rr \times \rr^2)$ and some $\eta \in (0,\infty)$
so that 
$$
\theta := \pi(B_\eta) \in (0,1), \quad B_\eta = B(f_1,\eta) := \{ \rho, W_1(\rho,f_1) < \eta\},
$$
we define 
$$
A := \frac1 \theta \indiq_{B_\eta} \pi, \quad B := \frac1{1-\theta} \indiq_{B_\eta^c} \pi
$$
so that 
$$
A, B \in \PPP_k(\PPP(\AA\times\rr^2)) \cap \Ee_\infty(r)
\quad\hbox{and} \quad \pi = \theta A+ (1- \theta )B,
$$
and we have to prove that 
\beqn\label{eq:linearIIprim}
\tilde \II'(\pi) = \theta \, \tilde \II'(A) + (1-\theta) \, \tilde \II'(B).
\eeqn
We claim that proceeding as in the proof of \cite[Lemma 5.10]{hm} we may assume, up to regularization by convolution in the $X$ variables, that 
$$
\sup_{j,M,X} \Bigl(  |\nabla_1 \log \pi_j| +  |\nabla_1 \log A_j| +  |\nabla_1 \log B_j| \Bigr)  \le C <  \infty,
$$
where the $\nabla_1$ stands for the gradient in the first position variable only.
For any given $j \ge 1$, we   define
$$
Z_{j} :=  \theta \, \tilde I (A_{j}) + (1-\theta) \, \tilde I(A_{j}) 
- \tilde I(\theta \, A_{j} + (1-\theta) \, B_{j}),
$$
and after some calculations, we obtain, using the disintegrations $A_j=R_j \alpha_j$ and
$B_j= R_j \beta_j$ as previously,
\bean
Z_{j} &=&  \, \theta (1-\theta) \int R_j \, \frac{\alpha_{j}
\beta_{j}}{(1-\theta)\alpha_{j} + \theta \beta_{j}}
\biggl| \nabla_1 \log \frac{\alpha_{j}}{\beta_{j}}\biggr|^2 \\
&\le &
 2 \, \theta (1-\theta) \int R_j \, \frac{\alpha_{j}
\beta_{j}}{(1-\theta)\alpha_{j} + \theta \beta_{j}} 
\left(\bigl| \nabla_1 \log \beta_{j} \bigr|^2+ \bigl| \nabla_1 \log \alpha_{j}\bigr|^2 \right)
 \\
&\le& 
4
   \theta (1-\theta)  \, C \int R_j \, \frac{\alpha_{j}
\beta_{j}}{(1-\theta)\alpha_{j} + \theta \beta_{j}}   
= 4
   \theta (1-\theta)  \, C \int  \, \frac{A_j
B_j}{(1-\theta)A_j + \theta B_j}.
\eean
At this stage, the proof follows exactly the one done in~\cite[Lemma 5.10]{hm} which states that  
the same property holds for the full Fisher information. Let us introduce, for any $s \in (0,\eta)$ the two measures on $\PPP(\AA\times\rr^2)$ (which  are not necessarily probability measures)
$$
A' :=  \indiq_{B_s} A  =\frac1 {\theta} \indiq_{B_s} \pi,\qquad 
A'' := \indiq_{B_\eta\backslash B_s} A,
$$
and let us observe that 
$$
\lim_{s \to \eta } \int A''(d\rho) = 0  ,
$$
by Lebesgue's dominated convergence theorem.  
By construction, there holds $A' + A'' = A$ as well as for any   $j \ge 1$  there holds $A_{j} '+ A_{j}''=A_{j}$
with $A''_{j}\ge 0$, so that we may write for any $\eps > 0$ 
\bean
Z_{j} &\le & 
4
   \theta (1-\theta) \, C \int_{\PPP(\AA\times\rr^2)}  \frac{B_j
A'_j}{(1-\theta) B_j + \theta A'_j}   +  \eps
\eean
taking $s$ close enough to $r$ (independently of $j$).  We introduce the notation $y=(m,x)$ for 
the couple circulation-position,  define the distance $d(y,y') := \min(|y-y'|,1)$ on 
$\rr \times \rr^2$ and the Monge-Kantorovitch-Wasserstein distance $W_1$ defined on 
$\PPP(\rr \times \rr^2)$ according to distance $d$. 

We introduce the real numbers $u= \frac{\eta+s}2$ and  $\delta = \frac{\eta-s}2$, 
as well as the set 
$$
\tilde B_{u} := \{ Y^j=(y_1,\ldots,y_j) \;, \;  W_1(\mu^j_{Y^j}, f_1) < u \}
\subset (\rr\times\rr^2)^j
$$
which is nothing but the reciprocal image of the ball $B_u \subset \PPP(\rr\times\rr^2)$ 
by the empirical measure map. Using that  
$$
\frac{B_j A'_j}{(1-\theta) B_j + \theta A'_j} \le \frac1\theta B_j \, {\bf
1}_{\tilde B_u} + 
\frac1{ 1-\theta} A'_j  \, {\bf1}_{\tilde B_u^c}, 
$$
we get 
\beqn\label{eq:Atj}
Z_{j} \leq 
4\, C \left( 
 (1-\theta) \int_{\tilde B_{u}}  B_{j} +  \theta \int_{\tilde B_{u}^c} A'_{j}  \right)
 +  \eps.
\eeqn
Using concentration of empirical measures exactly as in Step 3 of the proof of \cite[Lemma 5.11]{hm}
we deduce that 
$$
Z_j   \le 
\frac{ 4 \, C  \, [A^k + \tMk(\pi)]^{1/k} }{\delta j^\gamma} +  \eps,
$$
 with $\gamma := 1/(5+3/k)$.  Remark that we have the bound $A^k + \tMk(\pi)$ 
for the full moment in $z=(m,x)$ of the probability $\pi$, since the $m$ variable 
always belongs to $\AA=[-A,A]$.
Using the above estimate and the convexity estimate 
$Z_j \ge 0$, we 
obtain that 
$$
\lim_{j \to 0} Z_j = 0
$$
from which we conclude using \eqref{eq:JJprime=limit}.
\end{preuve}

As a consequence of Lemma~\ref{lem:propHIJ}, we also have 
some super-additivity inequalities as well as some weak lower semi-continuity properties
that we  will frequently use.

\begin{cor}\label{cor:fishdec} 
Let $k>0$ and $r\in\PPP(\rr)$ supported in $\AA=[-A,A]$ for some $A>0$.

(i) Let $G \in \Ee_N(r)$ for some $N\geq 2$. For any $ 1\leq  j \leq N$, 
denoting by $G_j$ the $j$-marginal of $G$ and 
introducing the Euclidian decomposition $N = n \, j + \ell$, $0 \le \ell \le j-1$,
there holds
\beqn\label{eq:supaddF&H}
\tI(G_j)\leq  (1+ \frac{\ell}{nj} ) \,  \tI(G) \le 2 \,   \tI(G) 
\quad\hbox{and}\quad
\tH(G_j)\leq  (1+ \frac{\ell}{nj} ) \,  \tH(G) + \frac{\ell}{nj} \, (C_{k} +  \tilde M_{k/2}(G_\ell) ). 
\eeqn

(ii) Let $j\geq 1$ be fixed and $\pi_j \in \Ee_j(r)$. Consider a sequence $G^N \in \Ee_N(r)$
such that $G^N_j \wto \pi_j$ weakly in $\PPP((\R \times \R^2)^j)$
as $N \to \infty$ and $\sup_N \tilde M_k(G^N_1)<\infty$. Then
$$
\tH (\pi_j) \leq \liminf_{N \to \infty} \tH (G^N)
\quad\hbox{and}\quad
\tI (\pi_j) \leq \liminf_{N \to \infty} \tI (G^N).
$$
\end{cor}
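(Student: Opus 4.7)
The plan is to derive (i) by iterating the super-additivity property from Lemma~\ref{lem:propHIJ}(iii), and then obtain (ii) by combining (i) with the weak lower semi-continuity of $\tH$ and $\tI$ provided by Lemma~\ref{lem:propHIJ}(i1)--(i2).

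For part (i), I would apply Lemma~\ref{lem:propHIJ}(iii) to $G \in \Ee_N(r)$ with the splitting $N = j + (N-j)$, obtaining $N\,\tJ(G) \ge j\,\tJ(G_j) + (N-j)\,\tJ(G_{N-j})$, where $\tJ$ stands for either $\tH$ or $\tI$. Peeling off one more block of $j$ particles from $G_{N-j}$ and iterating the reasoning a total of $n$ times (noting that $N - nj = \ell$), I obtain
\[
N\,\tJ(G) \ge nj\,\tJ(G_j) + \ell\,\tJ(G_\ell).
\]
For the Fisher information, non-negativity lets me discard the $\ell\,\tI(G_\ell)$ term and dividing by $nj$ gives the first inequality of \eqref{eq:supaddF&H}; the bound by $2\,\tI(G)$ then follows from $\ell < j \le nj$. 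For the entropy the subtlety is that $\tH(G_\ell)$ may be negative; I would therefore invoke Lemma~\ref{ieth} with exponent $k/2$ and $\lambda = 1$, applied slice by slice in the disintegration $G_\ell(dM,dX) = R_\ell(dM)\,F^M_\ell(dX)$ and then integrated against $R_\ell$, to get $\tH(G_\ell) \ge - C_k - \tilde M_{k/2}(G_\ell)$. Plugging this in and dividing by $nj$ produces the stated bound on $\tH(G_j)$.

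For part (ii), I fix $j\ge 1$ and write $N = n_N j + \ell_N$ with $\ell_N \in \{0,\dots,j-1\}$, so that $n_N \to \infty$ and $\ell_N/(n_N j) \to 0$. By exchangeability, $\tilde M_{k/2}(G^N_{\ell_N}) = \tilde M_{k/2}(G^N_1)$, which is uniformly bounded in $N$ thanks to $\sup_N \tilde M_k(G^N_1) < \infty$ and $\langle x\rangle^{k/2} \le 1 + \langle x\rangle^k$; combining this with Lemma~\ref{ieth} also yields a uniform lower bound $\tH(G^N) \ge -c$. Applying (i) with this data, the remainder terms in both inequalities of \eqref{eq:supaddF&H} are $o(1)$, so taking $\liminf$ produces $\liminf_N \tJ(G^N_j) \le \liminf_N \tJ(G^N)$ for $\tJ \in \{\tH,\tI\}$. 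Invoking the weak lower semi-continuity of $\tH$ and $\tI$ from Lemma~\ref{lem:propHIJ}(i1)--(i2) on the limit $\pi_j$ then gives $\tJ(\pi_j) \le \liminf_N \tJ(G^N_j)$, and chaining the two inequalities concludes. I do not expect a serious obstacle in this plan; the only delicate point is keeping track of the sign of $\tH(G^N)$ when multiplied by the prefactor $(1 + o(1))$, which is precisely why the uniform lower bound from Lemma~\ref{ieth} is needed.
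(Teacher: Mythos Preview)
Your proposal is correct and follows essentially the same approach as the paper: iterate Lemma~\ref{lem:propHIJ}(iii) to get $nj\,\tJ(G_j)+\ell\,\tJ(G_\ell)\le N\,\tJ(G)$, drop the $\ell$-term via non-negativity (for $\tI$) or the lower bound $\tH(G_\ell)\ge -C_k-\tilde M_{k/2}(G_\ell)$ (for $\tH$), and then combine with the weak lower semi-continuity from Lemma~\ref{lem:propHIJ}(i) for part (ii). Your treatment of (ii) is in fact more explicit than the paper's one-line ``conclude using point (i)'': you correctly spell out why the remainder $\frac{\ell_N}{n_Nj}(C_k+\tilde M_{k/2}(G^N_{\ell_N}))$ vanishes (via exchangeability and the moment bound) and why the prefactor $(1+o(1))$ on $\tH(G^N)$ is harmless (via the uniform lower bound on $\tH(G^N)$ from Lemma~\ref{ieth}).
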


\begin{preuve}  {\it of Corollary~\ref{cor:fishdec}.} We start with point (i).
Iterating the super-additivity property expressed in Lemma~\ref{lem:propHIJ}-(iii) tells us that
\beqn\label{eq:superaddJ}
 \ell \tilde J(G_\ell)+nj \tilde J(G_j) \le N \tilde J(G),
\eeqn
for $\tilde J = \tH$ and $\tilde J = \tI$.  In the case of the Fisher information (which is non-negative), 
we deduce that 
$nj \tilde I(G_j) \le N \tilde I(G) $ which implies the  first assertion 
in \eqref{eq:supaddF&H}. 
For the entropy, \eqref{eq:superaddJ} together with the non-negativity property
$\ell \,  \tH (G_\ell) + \ell \,  (\tilde M_{k/2}(G_\ell) + C_{k})  \ge 0$ established in 
Lemma~\ref{lem:propHIJ}-(i2)
 imply the last  assertion in \eqref{eq:supaddF&H}. 
 
\vip
We next check (ii).
The lower semi-continuity stated in Lemma~\ref{lem:propHIJ}-(i) implies that
$$
\tilde J( \pi_j) \le \liminf_{N \to \infty}  \tilde J(G^N_j)
$$
for $\tilde J = \tH$ and $\tilde J = \tI$. We conclude using point (i).
\end{preuve}

\section{Main estimates and tightness}\label{sec:smainest}
\setcounter{equation}{0}

In the whole section, a family $G^N_0 \in \Psym((\rr\times\rr^2)^N)$ satisfying \eqref{chaosini}
for some $k\in (0,1]$, some $A \in (0,\infty)$ and some $g_0 \in \PPP(\rr\times\rr^2)$ is fixed. 
The following estimate is central in our proof.

\begin{prop}\label{cutoff}
For $N\geq 2$, let $(\MM_i^N,\XX_i^N(0))_{i=1,\dots,N}$ be
$G^N_0$-distributed and consider the unique solution $(\XX^N_i(t))_{i=1,\dots,N,t\geq 0}$ to (\ref{ps}). 
For $t\geq 0$, denote by $G^N_t \in \Psym((\rr\times\rr^2)^N)$ the law of $(\MM_i^N,\XX_i^N(t))_{i=1,\dots,N}$.
There is a constant $C = C(\sigma,k,A)$ such that for all $t\geq 0$,
\bear
\tilde H(G^N_t) + \tilde M_k(G^N_t) + \frac \nu2 \int_0^t\tilde I (G^N_s)ds \leq
\tilde H(G^N_0) + 
\tilde M_k(G^N_0) + C \, t.
\label{eq:entrop}
\eear
As a consequence, there exists a constant $C$ which depends 
furthermore on an upper bound on $ \tilde H(G^N_0)$ and $ \tilde M_k(G^N_0)$ so that for all $N \geq 2$, 
all $t\geq 0$,
\bear
\tilde H(G^N_t) \leq C(1+t), \quad 
\tilde M_k(G^N_t) \leq C(1+t) \quad \hbox{and} \quad
  \int_0^t\tilde I (G^N_s)ds \leq C(1+t).\label{eq:aprioribds}
\eear
\end{prop}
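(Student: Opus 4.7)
The strategy is to derive a differential inequality for the quantity $\tilde H(G^N_t) + \tilde M_k(G^N_t)$ whose negative term involves $\nu \tilde I(G^N_t)$ and whose positive term can be absorbed into this dissipation via Young's inequality. This uses two ingredients: a standard entropy identity coming from the Fokker-Planck equation, and a moment estimate where the singular Biot-Savart drift is controlled by the Fisher information through Lemma~\ref{lem:FishInteg}.

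First I would fix $M = (M_1,\dots,M_N) \in \AA^N$ and consider the conditional density $F^{M,N}_t$ of the positions. Since the circulations are frozen, $F^{M,N}_t$ satisfies the linear Fokker-Planck equation
\[
\partial_t F^{M,N} = -\nabla_X \cdot (b^M F^{M,N}) + \nu \Delta_X F^{M,N},\quad b^M_i(X) = \tfrac 1N \sum_{j\ne i} M_j\, K(x_i - x_j).
\]
Because $K$ is divergence-free, $\text{div}_X b^M = 0$. A formal computation (which would be justified by a regularization $K_\varepsilon \to K$, using the super-additivity of $\tilde I$ and its weak lower semi-continuity from Corollary~\ref{cor:fishdec}, together with Osada's non-collision result that allows Itô's formula to be applied pathwise) then yields
\[
\frac{d}{dt} H(F^{M,N}_t) = -\nu\, I(F^{M,N}_t).
\]
Integrating against $R(dM) = r_0^{\otimes N}(dM)$, which is time-independent by \eqref{chaosini}, gives the partial entropy identity $\frac{d}{dt}\tilde H(G^N_t) = -\nu\, \tilde I(G^N_t)$.

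Second, I would compute the evolution of the moment. Applying Itô's formula to $\langle X^N_i(t)\rangle^k$, summing and averaging over $i$, and using that $|\nabla \langle x\rangle^k| \le k$ and $|\Delta\langle x\rangle^k| \le C_k$ for $k \in (0,1]$, we get
\[
\frac{d}{dt}\tilde M_k(G^N_t) \le C_k + \frac{k}{N^2}\sum_{i\ne j} \E\bigl[|M^N_j|\,|K(X^N_i - X^N_j)|\bigr] \le C_k + k A \,\E\bigl[|X^N_1 - X^N_2|^{-1}\bigr]
\]
by exchangeability and the support condition $|M_j^N| \le A$. Now apply Lemma~\ref{lem:FishInteg} with $\gamma = 1$ and any $\beta \in (1/2,1)$ to the law of $(X_1^N,X_2^N)$ conditioned on $M$, integrate over $M$, and use Jensen's inequality (concavity of $s \mapsto s^\beta$) together with the super-additivity of $\tilde I$ from Corollary~\ref{cor:fishdec} to bound
\[
\E[|X^N_1 - X^N_2|^{-1}] \le C_\beta (1 + \tilde I(G^N_{t,2})^\beta) \le C_\beta(1 + (2\tilde I(G^N_t))^\beta).
\]

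Combining the two estimates, there exist constants $C$ depending on $\sigma,k,A$ such that
\[
\frac{d}{dt}\bigl[\tilde H(G^N_t) + \tilde M_k(G^N_t)\bigr] + \nu\, \tilde I(G^N_t) \le C\bigl(1 + \tilde I(G^N_t)^\beta\bigr).
\]
Since $\beta < 1$, Young's inequality absorbs $C\,\tilde I(G^N_t)^\beta$ into $(\nu/2)\tilde I(G^N_t)$ at the cost of an additive constant, yielding
\[
\frac{d}{dt}\bigl[\tilde H(G^N_t) + \tilde M_k(G^N_t)\bigr] + \tfrac{\nu}{2}\, \tilde I(G^N_t) \le C.
\]
Integrating in time gives \eqref{eq:entrop}. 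The a priori bounds \eqref{eq:aprioribds} then follow from the uniform control of $\tilde H(G^N_0) + \tilde M_k(G^N_0)$ assumed in \eqref{chaosini}, together with the lower bound $\tilde H(G^N_t) \ge -C_{k,1} - \tilde M_k(G^N_t)$ furnished by Lemma~\ref{ieth}, which separates the entropy from the moment.

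The main obstacle is the rigorous justification of the entropy identity $\frac{d}{dt}\tilde H = -\nu \tilde I$ despite the singularity of $K$: the formal integration by parts exploiting $\text{div}\, b^M = 0$ has to be performed at a regularized level $K_\varepsilon$ (for which classical Fokker-Planck theory applies), and then one passes to the limit using the uniform-in-$\varepsilon$ Fisher bound and the lower semi-continuity properties developed in Section~\ref{sec:prel2}. The remaining inputs---the moment estimate and Young's inequality---are comparatively routine once the singular quantity $\E[|X_1-X_2|^{-1}]$ has been subordinated to $\tilde I(G^N_t)$.
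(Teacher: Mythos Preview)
Your proposal is correct and follows essentially the same route as the paper: disintegration in $M$, the entropy identity $\tilde H(G^N_t)+\nu\int_0^t\tilde I(G^N_s)\,ds=\tilde H(G^N_0)$ from the divergence-free Fokker--Planck equation, the moment estimate via It\^o and exchangeability reducing to $\E[|X_1^N-X_2^N|^{-1}]$, control of this quantity by Lemma~\ref{lem:FishInteg} and Corollary~\ref{cor:fishdec}, and absorption by Young's inequality. The paper makes the specific choice $\beta=2/3$ where you allow any $\beta\in(1/2,1)$, and it sketches the regularization $K_\varepsilon\to K$ in the same way you describe; the derivation of \eqref{eq:aprioribds} from \eqref{eq:entrop} via Lemma~\ref{ieth} is also identical.
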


\begin{proof}
The computations below are formal.
To handle a rigorous proof, it suffices to approximate the singular
kernel $K$ by a smoothed kernel $K_\e$ enjoying the properties that div $K_\e=0$ and that $K_\e(x)=K(x)$
for all $|x|\geq \e$, which makes all
the computations below rigorous. 
Since (\ref{ps}) is well-posed thanks to Osada \cite{o2} (see Theorem \ref{wpps}) and since 
the functionals $\tilde M_k$, $\tilde H$ and $\tilde I$ are lower semi-continuous 
for the weak convergence, it is not hard to conclude to \eqref{eq:entrop} (with only an inequality now).

\vip
{\it Step 1.}  Denoting by  $X=(x_1,\dots,x_N)$ and $M=(m_1,\dots,m_N)$, we disintegrate $G^N_t(dM,dX)$ as 
$R_t^N(dM)F^{N,M}_t(dX)$ 
and we observe that $F^{N,M}_t$ is nothing but the conditional law of 
$(\XX^{N}_i(t))_{i=1,\dots,N}$ knowing that $(\MM_i^N)_{i=1,\dots,N}=M$. 
We also observe that $R_t^N(dM) = R^N_0(dM)$, because the circulations 
$\MM_i^N$ do 
not depend on  time. 
Conditionally on $(\MM_i^N)_{i=1,\dots,N}=M$, $(\XX^{N}_1(t),\dots,\XX^{N}_N(t))$ solves 
\begin{align}\label{pscond}
\forall i=1,\ldots,N,\quad \XX_i^{N}(t) = \XX_i(0)+  \frac1N \sum_{j \neq i} \intot m_j K(\XX_i^{N}(s) -
\XX_j^{N}(s))ds +  \sigma \BB_i(t).
\end{align}
Applying the It\^o formula to compute the conditional expectation of 
$\varphi(\XX^{N}_1(t),\dots,\XX^{N}_N(t))$ knowing that $(\MM_i^N)_{i=1,\dots,N}=M$, we get, for any $\varphi \in
C^2_b((\rr^2)^N)$, any $t\geq 0$,
\begin{align}\label{eqwfn}
\frac d {dt} \int_{(\rr^2)^N} \varphi(X) F^{N,M}_t(dX) =& 
\int_{(\rr^2)^N} \left[\frac1N \sum_{i\ne j} m_j K(x_i-x_j)\cdot \nabla_{x_i}\varphi(X)\right]F^{N,M}_t(dX)\\
&+ \nu  \int_{(\rr^2)^N} \Delta_{X}\varphi(X). \nonumber
\end{align}
Consequently (recall that div $K=0$), $F^{N,M}$ is a weak solution to
\begin{align}\label{eqfo}
\partial_t F^{N,M}_t(X) 
+ \frac1N \sum_{i\ne j} m_j K(x_i-x_j) \cdot 
\nabla_{x_i}F^{N,M}_t(X) = \nu \Delta_{X} F^{N,M}_t(X).
\end{align}

{\it Step 2.} We easily compute the evolution of the entropy (in the space variable):
\bean
\frac d { dt}H(F^{N,M}_t) 
&=& \frac 1N \int_{(\rr^2)^N} (\partial_t F^{N,M}_t(X)) (1+\log (F^{N,M}_t(X))) \, dX\\
&=& -\frac 1{N^2} \sum_{i\ne j} m_j \int_{(\rr^2)^N}  K(x_i-x_j). \nabla_{x_i} F^{N,M}_t(X) (1+\log (F^{N,M}_t(X)))
\, dX \\
&&+  \frac \nu N \int_{(\rr^2)^N}  \Delta_{X} F^{N,M}_t(X) \,  (1+\log
(F^{N,M}_t(X))) \, dX.
\eean
Observing that the first term vanishes (because div $K=0$) 
and performing an integration by parts on the second term,
we immediately and  classically deduce that
\begin{align}\label{entropFNm}
H(F^{N,M}_t) +  \nu \intot I(F^{N,M}_s) \, ds = H(F^{N,M}_0), \quad \forall \, t
\ge 0. 
\end{align}
Integrating this equality against $R_0(dM)$, we finally get
\begin{align}\label{entropGN}
\tilde H(G^{N}_t) +  \nu \intot \tilde I(G^{N}_s) ds = \tilde H(G^{N}_0), \quad
\forall \, t \ge 0. 
\end{align}

{\it Step 3.} Applying \eqref{eqwfn} with $\varphi(X)= \langle X \rangle^k$ 
(for which $|\nabla_{x_i}\varphi| \leq C/N$ and $|\Delta_X \varphi|\leq C$ because
$k \in (0,1]$) and integrating against $R_0^N(dM)$, we get
\bean
\frac d { dt} \tMk(G^N_t)
&\leq&\frac C {N^2} \int_{\rr^N} R_0^N(dM) \int_{(\rr^2)^N} F^{N,M}_t(dX) 
\sum_{i\ne j}  |m_j| | K(x_i-x_j) | \\
&&+  C  \int_{\rr^N} R_0^N(dM) \int_{(\rr^2)^N} F^{N,M}_t(dX)\\
&\le& C A  \int_{(\rr\times \rr^2)^N} G^N_t(dM,dX) |K(x_1-x_2)|  +   C.
\eean
For the last inequality, we used that $R_0^N(dM)F^{N,M}_t(dX)=G^N_t(dM,dX)$
is a symmetric probability measure supported in $([-A,A]\times\rr^2)^N$.
Denoting by $G^{N}_{t2}$ the two-marginal of $G^{N}_t$, disintegrating 
$G^{N}_{t2}(dm_1,dx_1,dm_2,dx_2)=r_t^N(dm_1,dm_2)f_t^{N,m_1,m_2}(dx_1,dx_2)$
and using Lemma
\ref{lem:FishInteg} with $\gamma = 1$ and $\beta= 2/3$, then the Jensen inequality 
($r^N_t$ is a probability measure) and finally the definition of $\tI$, we find 
\bean
\int_{(\R\times\R^2)^N}   |K(x_1-x_2)| \,  G^{N}_t(dM,dX) 
&=& \int_{(\rr\times \R^2)^2} \frac1{|x_1-x_2|} G^{N}_{t2} (dm_1,dm_2,dx_1,dx_2)\\
&=& \int_{\R^2} r_t^N(dm_1,dm_2) \int_{(\rr^2)^2} \frac1{|x_1-x_2|}f_t^{N,m_1,m_2}(dx_1,dx_2)\\
&\leq & C \int_{\R^2} r_t^N(dm_1,dm_2) \left( 1+ I(f_t^{N,m_1,m_2})^{2/3}\right)\\
&\leq & C + C\left(\int_{\R^2} r_t^N(dm_1,dm_2) I(f_t^{N,m_1,m_2})\right)^{2/3}\\
&\le& C + C \tI(G^{N}_{t2})^{2/3}.
\eean
 Finally, using Corollary \ref{cor:fishdec}, we have
$\tI(G^{N}_{t2}) \leq 2 \tI(G^N_t)$,  so that
$$
\frac d { dt}\tMk(G^N_t)  
\le C + C  \tilde I ( G^{N}_{t} )^{2/3} \leq C +  \frac\nu2 \tilde I (
G^{N}_{t}) .
$$
For the last inequality, we recall that the value of $C$ is allowed to change and we mention that we used 
the inequality $Cx^{2/3} \leq C' + (\nu/2)x$ for all $x\geq 0$.
Integrating in time, we thus get
\beqn\label{MkGN}
\tMk(G^N_t)  
\le Ct + \tMk(G^N_0) + \frac\nu 2  \intot \tI ( G^{N}_{s} )ds.
\eeqn
{\it Step 4.}
Summing \eqref{entropGN} and \eqref{MkGN},  we thus find \eqref{eq:entrop}.
This implies the first inequality in  (\ref{eq:aprioribds}) by positivity of $\tMk$ and $\tI$. 
Finally, we write 
$$
 \tMk(G^N_t) + \frac\nu 2 \intot \tI ( G^{N}_{s} )ds \leq C(1+t) -\tH(G^N_t)
\leq C(1+t) +\tMk(G^N_t)/2
$$
by Lemma \ref{ieth} (with the choice $\lambda=1/2$). 
Thus $\tMk(G^N_t) + \nu \intot \tI ( G^{N}_{s} )ds \leq C(1+t)$ 
which implies the second and third  inequalities in (\ref{eq:aprioribds}) by positivity of $\tMk$ and 
$\tI$ again.
\end{proof}

We can now easily prove the tightness of our particle system.

\begin{lem}\label{tight}
For each $N\geq 2$, recall that $(\MM_i^N,\XX_i^N(0))_{i=1,\dots,N}$ is $G^N_0$-distributed
and consider the unique solution
$(\XX^{N}_i(t))_{i=1,\dots,N,t\geq 0}$ to (\ref{ps}) .
We also set $\QQ^{N}:=N^{-1}\sum_{i=1}^N\delta_{(\MM_i^N,(\XX^{N}_i(t))_{t\geq 0})}$.

(i) The family $\{\LL((\XX^{N}_1(t))_{t\geq 0}), N\geq 2\}$ is tight in $\PPP(C([0,\infty),\rr^2))$.

(ii) The family $\{\LL(\QQ^{N}), N\geq 2\}$ is tight in $\PPP(\PPP(\rr\times C([0,\infty),\rr^2)))$.
\end{lem}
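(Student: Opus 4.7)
\begin{preuve}[Plan for Lemma \ref{tight}]
For part (i), I would decompose the trajectory as
$$
\XX_1^N(t) = \XX_1^N(0) + D^N(t) + \sigma \BB_1(t), \qquad
D^N(t) := \intot b^N(s)\, ds, \quad b^N(s) := \frac 1N \sum_{j\ne 1} \MM_j^N K(\XX_1^N(s)-\XX_j^N(s)),
$$
and verify separately that the initial law, the Brownian part, and the drift part are tight in $C([0,T],\rr^2)$ for each fixed $T>0$ (which suffices for tightness in $C([0,\infty),\rr^2)$). The initial tightness follows from the uniform bound $\sup_N \tMk(G^N_0)<\infty$ which, by exchangeability, yields $\sup_N \E \langle \XX_1^N(0)\rangle^k <\infty$. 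The Brownian part is standard.

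The heart of the proof is to show that the modulus of continuity $w_\delta^{[0,T]}(D^N) := \sup_{0 \le s \le t \le T,\; t-s \le \delta} |D^N(t)-D^N(s)|$ goes to $0$ in probability uniformly in $N$ as $\delta \to 0$. I would fix some $p \in (1,2)$ and some $\beta \in (p/2, 1)$ (say $p=3/2$, $\beta=4/5$) and argue as follows. By H\"older's inequality in time,
$$
|D^N(t) - D^N(s)| \;\le\; (t-s)^{1-1/p} \, \Bigl( \int_0^T |b^N(u)|^p \, du\Bigr)^{1/p},
$$
so $w_\delta^{[0,T]}(D^N) \le \delta^{1-1/p}(\int_0^T |b^N|^p)^{1/p}$. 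The convexity of $x\mapsto |x|^p$ applied to the average over $j$, together with $|\MM_j^N| \le A$, yields
$|b^N(u)|^p \le A^p \cdot \frac{1}{N-1}\sum_{j\ne 1}|K(\XX_1^N(u)-\XX_j^N(u))|^p$. Taking expectations and using exchangeability,
$$
\E\int_0^T |b^N(u)|^p du \;\le\; A^p \int_0^T \E \Bigl[|\XX_1^N(u)-\XX_2^N(u)|^{-p}\Bigr]\, du.
$$

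The key estimate is now to combine Lemma \ref{lem:FishInteg} with the Fisher information bound. Disintegrating the two-particle marginal $G^N_{u,2}(dm_1,dx_1,dm_2,dx_2) = r^N_u(dm_1,dm_2)f^{N,m_1,m_2}_u(dx_1,dx_2)$ and applying Lemma \ref{lem:FishInteg} (with $\gamma=p$, our chosen $\beta$) to each $f^{N,m_1,m_2}_u$, then using Jensen's inequality (since $x \mapsto x^\beta$ is concave) and Corollary \ref{cor:fishdec}(i), I get
$$
\E[|\XX_1^N(u)-\XX_2^N(u)|^{-p}] \;\le\; C_{p,\beta}\bigl(1+\tI(G^N_{u,2})^\beta\bigr) \;\le\; C_{p,\beta}\bigl(1+(2\tI(G^N_u))^\beta\bigr).
$$
Since $\beta<1$, a final H\"older in time combined with the integrability bound $\int_0^T \tI(G^N_u)\, du \le C(1+T)$ from Proposition \ref{cutoff} yields
$$
\E\int_0^T |b^N|^p du \;\le\; C_{p,\beta,T},
$$
uniformly in $N$. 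Markov's inequality then gives $\Prob(w_\delta^{[0,T]}(D^N) > \eps) \le \eps^{-1} C_{p,\beta,T}\, \delta^{1-1/p}$, which goes to $0$ as $\delta \to 0$ uniformly in $N$. Together with the tightness of $\{\LL(\XX_1^N(t))\}$ at each $t$ (again from $\tMk(G^N_t) \le C(1+t)$), this proves (i).

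For part (ii), I would invoke the well-known equivalence (see for instance Sznitman \cite{SSF}): for exchangeable sequences, tightness of $\LL(\QQ^N)$ in $\PPP(\PPP(E))$ is equivalent to tightness of $\LL(Y_1^N)$ in $\PPP(E)$, where $E = \rr \times C([0,\infty),\rr^2)$. Since the $\MM_1^N$ take values in the compact $[-A,A]$, their laws are automatically tight; combining with part (i) gives tightness of $\LL(\MM_1^N,(\XX_1^N(t))_{t\ge 0})$, and hence (ii). The main obstacle is the singular drift, handled quantitatively by the interplay of Lemma \ref{lem:FishInteg} (which turns a Fisher information bound into an integrability bound on $|x_1-x_2|^{-p}$) and Proposition \ref{cutoff} (which gives the time integrability of $\tI(G^N_u)^\beta$ for $\beta<1$).
\end{preuve}
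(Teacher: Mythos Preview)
Your proposal is correct and follows essentially the same route as the paper: decompose the trajectory into initial point, Brownian part, and drift, control the drift's modulus of continuity via H\"older in time, and bound the resulting singular integral $\E[|\XX_1^N(u)-\XX_2^N(u)|^{-\gamma}]$ using Lemma~\ref{lem:FishInteg}, Corollary~\ref{cor:fishdec}(i), and the uniform Fisher bound of Proposition~\ref{cutoff}; part (ii) is deduced from (i) via exchangeability and the compactness of $[-A,A]$ exactly as in the paper. The only cosmetic difference is bookkeeping: the paper applies H\"older termwise with exponent $3/2$ and then uses $x^{2/3}\le 1+x$ to land directly on $\int_0^T \tI(G^N_u)\,du$ (i.e.\ $\gamma=3/2$, $\beta=1$ in Lemma~\ref{lem:FishInteg}), whereas you apply H\"older to the averaged drift with a generic $p\in(1,2)$ and take $\beta<1$, closing with a H\"older-in-time on $\int_0^T \tI(G^N_u)^\beta\,du$.
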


\begin{proof}
First, point (ii) follows from
point (i). Indeed, $\MM_1^N$ takes values in the compact set $[-A,A]$, so that we deduce from
(i) that the family $\{\LL(\MM_1^N,(\XX^{N}_1(t))_{t\geq 0}), N\geq 2\}$ 
is tight in $\PPP(\rr\times C([0,\infty),\rr^2))$. Then (ii) follows from the exchangeability 
of the system, see \cite[Proposition~2.2]{SSF} or  \cite[Lemma 4.5]{m}.

\vip

To prove (i), we have to check that for all $\eta>0$ and all $T>0$, we can find a compact
subset $\cK_{\eta,T}$ of $C([0,T],\rr^2)$ such that $\sup_{N} \Prob[(\XX^{N}_1(t))_{t\in [0,T]} \notin
\cK_{\eta,T}]\leq \eta$. Let thus $\eta>0$ be fixed.
We introduce the random variable $Z_T:=\sup_{0<s<t<T}|\sigma \BB_1(t)-\sigma \BB_1(s)|/|t-s|^{1/3}$ 
which is a.s. finite,
since the paths of $\BB_1$ are a.s. H\"older continuous with index $1/3$. Note also
that the law of $Z_T$ does not depend on $N$. Next, we use the H\"older inequality 
and the fact that a.s., $|\MM_i^N|\leq A$ for all $i$ (recall (\ref{chaosini})) to get,
for all $0<s<t<T$,
\begin{align*}
\Big|\frac1N\int_s^t \sum_{j\ne 1}\MM_j^N K(\XX^{N}_1(u)-\XX^{N}_j(u))du\Big|\leq&
\frac A N \int_s^t \sum_{j\ne 1}|\XX^{N}_1(u)-\XX^{N}_j(u)|^{-1} du\\
\leq& \frac A N (t-s)^{1/3}
\sum_{j\ne 1}\left[\int_0^T |\XX^{N}_1(u)-\XX^{N}_j(u)|^{-3/2} du\right]^{2/3}\\
\leq&  (t-s)^{1/3}\left[ A + 
\frac A N\sum_{j\ne 1}\int_0^T |\XX^{N}_1(u)-\XX^{N}_j(u)|^{-3/2} du \right]\\
=:& U_T^{N} (t-s)^{1/3}.
\end{align*}
All this yields that for all  $0<s<t<T$ (recall that $\XX^N_1$ satisfies the first equation of
(\ref{ps})), 
$$
|\XX^{N}_1(t)- \XX^{N}_1(s)|\leq (Z_T+U_T^N) (t-s)^{1/3}.
$$
By exchangeability and using Lemma \ref{lem:FishInteg}, 
$$
\E[U_T^{N}]= A + A\frac{N-1}{N}\int_0^T \E[|\XX^{N}_1(u)-\XX^{N}_2(u)|^{-3/2}] du
\leq A+A \int_0^T \tI(G^N_{u2})du,
$$ 
where $G^N_{u2}$ is the two-marginal of $G^N_u$. 
But  $\tI(G^N_{u2}) \leq 2 \tI(G^N_u)$ (by Corollary~\ref{cor:fishdec}),  so that using finally 
Proposition \ref{cutoff}, $\E[U_T^{N}]\leq A + CA(1+T)$.

\vip

Thus $\sup_{N\geq 2}\E[U_T^{N}]<\infty$ and since $Z_T$ is a.s. finite, we can clearly find $R>0$ such that 
$\Prob[Z_T+U_T^{N}> R]\leq \eta/2$ for all $N\geq 2$. 
We also know by (\ref{chaosini}) that 
$\sup_{N\geq 2} \E[\langle\XX^N_1(0)\rangle^k]=\sup_{N\geq 2} \tMk(G^N_0)<\infty$, 
so that there is $a>0$ such that $\sup_{N\geq 2} \Prob[|\XX_1^N(0)|>a]\leq \eta/2$.
Let now $\cK_{\eta,T}$ be the set of all continuous functions $f:[0,T]\mapsto\rr^2$
with $|f(0)|\leq a$ and $|f(t)-f(s)|\leq R (t-s)^{1/3}$ for all $0<s<t<T$.
For all $N\geq 2$, we have
$\Prob[(\XX^{N}_1(t))_{t\in[0,T]} \notin \cK_{\eta,T}]\leq \Prob[|\XX_1^N(0)|>a ]+ \Prob[ Z_T+U_T^{N}> R ]\leq \eta$.
Since $\cK_{\eta,T}$ is a compact subset of $C([0,T],\rr^2)$, this ends the proof.
\end{proof}

\section{Consistency}\label{sec:pr}
\setcounter{equation}{0}

In the whole section, we assume (\ref{chaosini}) for some $k\in(0,1]$, some $A>0$ and some 
$g_0\in \PPP(\rr\times\rr^2)$.
We define $\cS$ as the set of all probability measures 
$g\in\PPP(\rr\times C([0,\infty),\rr^2))$ such that $g$ is the law of
$(\MM,(\XX(t))_{t\geq 0})$ with $(\XX(t))_{t\geq 0}$ solution to the nonlinear SDE \eqref{NSDE} associated 
with $g_0$ 
and satisfying (\ref{tif}): for $g_t\in \PPP(\rr\times \rr^2)$ the law of $(\MM,\XX(t))$,
\begin{align*}
\forall \; T>0,\quad \int_0^T \tI(g_s)ds <\infty.
\end{align*}

\begin{prop}\label{consist}
For each $N\geq 2$, let $(\MM_i^N,\XX_i^N(0))_{i=1,\dots,N}$ be $G_0^N$-distributed and
consider the unique solution
$(\XX^{N}_i(t))_{i=1,\dots,N,t\geq 0}$ to (\ref{ps}). Assume that there is a subsequence
of $\QQ^{N}:=N^{-1}\sum_{i=1}^N\delta_{(\MM_i^N,(\XX^{N}_i(t))_{t\geq 0}}$ going in law to some 
$\PPP(\rr\times C([0,\infty),\rr^2))$-valued random variable $\QQ$. Then $\QQ$ a.s. belongs to $\cS$.
\end{prop}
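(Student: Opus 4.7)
The plan is to characterize $\cS$ through a nonlinear martingale problem and to show that $\QQ$-almost surely $g$ solves it with the required Fisher information integrability. For $g\in\PPP(\rr\times C([0,\infty),\rr^2))$ with time-$u$ marginal $g_u\in\PPP(\rr\times\rr^2)$, set $w^g_u(dx):=\int m\,\indiq_{\{\gamma(u)\in dx\}}\,g(dm,d\gamma)$. Given $\phi\in C^2_b(\rr^2)$, times $0\le s_1<\cdots<s_k\le s<t$ and $h\in C_b((\rr\times\rr^2)^k)$, introduce
\begin{equation*}
F(g):=\int g(dm,d\gamma)\,h(m,\gamma(s_1),\dots,\gamma(s_k))\Bigl[\phi(\gamma(t))-\phi(\gamma(s))-\int_s^t\bigl((K*w^g_u)(\gamma(u))\cdot\nabla\phi(\gamma(u))+\nu\Delta\phi(\gamma(u))\bigr)\,du\Bigr].
\end{equation*}
I will establish $\E[|F(\QQ)|]=0$ for every such datum; applied to a countable dense family of test data, this identifies $\QQ$-a.e.\ $g$ with the law of a solution to the nonlinear SDE \eqref{NSDE} (cf.\ \cite{S1}), the initial condition $g_{|t=0}=g_0$ being forced by the chaos hypothesis in \eqref{chaosini}.

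The starting point is the It\^o identity for $\phi(\XX^N_i(\cdot))$ along \eqref{ps}: setting
\begin{equation*}
M^{\phi,N}_{i,t}:=\phi(\XX^N_i(t))-\phi(\XX^N_i(0))-\frac1N\sum_{j\ne i}\int_0^t\MM^N_j K(\XX^N_i(u)-\XX^N_j(u))\cdot\nabla\phi(\XX^N_i(u))\,du-\nu\int_0^t\Delta\phi(\XX^N_i(u))\,du,
\end{equation*}
the process $M^{\phi,N}_{i,\cdot}$ is a martingale. Writing $h_i:=h(\MM^N_i,\XX^N_i(s_1),\dots,\XX^N_i(s_k))$, one finds that $F(\QQ^N)=\tfrac1N\sum_i h_i(M^{\phi,N}_{i,t}-M^{\phi,N}_{i,s})$ up to a formally ill-defined diagonal of order $1/N$ involving $K(0)$. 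To regularize, pick a smooth divergence-free truncation $K_\eps$ with $K_\eps=K$ on $\{|x|\ge\eps\}$ and $\|K_\eps\|_\infty\le C/\eps$, and define $F^\eps$ by replacing $K$ with $K_\eps$ in the formula for $F$. The analogous decomposition yields $\E[|F^\eps(\QQ^N)|]\le C(\phi,h)/(N\eps)+O(1/\sqrt N)$, the last term coming from the martingale contribution whose variance is $O(1/N)$ by orthogonality of the $M^{\phi,N}_i$ (independence of the Brownian motions $(\BB_i)$). The truncation error is bounded pointwise by
\begin{equation*}
|F(g)-F^\eps(g)|\le C(\phi,h)\int_s^t\!\!\int_{\{|x-y|\le\eps\}}\frac{1}{|x-y|}\,v^g_u(dx)\,|w^g_u|(dy)\,du,
\end{equation*}
with $v^g_u$ the position marginal of $g_u$. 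Using $|x-y|^{-1}\indiq_{|x-y|\le\eps}\le\eps^\alpha|x-y|^{-1-\alpha}$ for small $\alpha>0$, Lemma \ref{lem:FishInteg} applied to the two-particle position marginal, Corollary \ref{cor:fishdec} and the a priori estimate $\int_0^T\tI(G^N_u)\,du\le C$ from Proposition \ref{cutoff} give $\E[|F(\QQ^N)-F^\eps(\QQ^N)|]\le C\eps^\alpha$ uniformly in $N$. Consequently $\E[|F(\QQ^N)|]\le C\eps^\alpha+O_\eps(1/\sqrt N)$.

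At $\eps$ fixed, $g\mapsto |F^\eps(g)|$ is bounded and continuous on $\PPP(\rr\times C([0,\infty),\rr^2))$ (time-evaluation is continuous on the path space and $K_\eps$ is smooth), so $\QQ^N\Rightarrow\QQ$ yields $\E[|F^\eps(\QQ^N)|]\to\E[|F^\eps(\QQ)|]\le C\eps^\alpha$. The delicate point is then to apply the same truncation estimate to $g=\QQ$, which demands
\begin{equation*}
\E_\QQ\Bigl[\int_0^T\tI(g_u)\,du\Bigr]<\infty.
\end{equation*}
This is the central role of Theorem \ref{th:levl3tH&tI}: at each $u$, $G^N_u\in\Ee_N(r)$ (since the circulations are time-independent and i.i.d.\ under \eqref{chaosini}) with $\tMk$-moment uniformly bounded by Proposition \ref{cutoff}, so \eqref{eq:level3I} together with the weak convergence $\QQ^N_u\Rightarrow\QQ_u$ yields $\E[\tI(\QQ_u)]\le\liminf_N\tI(G^N_u)$; Fatou in $u$ and Proposition \ref{cutoff} close the bound. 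Thus $\QQ$-a.s.\ $\int_0^T\tI(g_u)\,du<\infty$ and $\E[|F(\QQ)-F^\eps(\QQ)|]\le C\eps^\alpha$; letting $\eps\to 0$ gives $\E[|F(\QQ)|]=0$, proving $\QQ$-a.s.\ $g\in\cS$. The main obstacle is precisely this transfer of the Fisher information bound to the random limit $\QQ$: without the level-3 lower-semicontinuity of $\tI$ established in Section \ref{sec:prel2} the truncation error on $\QQ$ could not be controlled and the entire scheme would fail.
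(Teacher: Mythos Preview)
Your approach is essentially the same as the paper's: characterize $\cS$ by a nonlinear martingale problem, regularize $K$ by $K_\eps$, control the truncation error on $\QQ^N$ via Lemma~\ref{lem:FishInteg} together with the Fisher bound from Proposition~\ref{cutoff}, pass to the limit using continuity of $F^\eps$, and transfer the Fisher bound to $\QQ$ via Theorem~\ref{th:levl3tH&tI} in order to remove the truncation on the limit. The only slip is a bookkeeping one: your claimed bound $\E[|F^\eps(\QQ^N)|]\le C/(N\eps)+O(1/\sqrt N)$ omits the off-diagonal $(K-K_\eps)$ contribution (the It\^o martingale for the true system involves $K$, not $K_\eps$), so the correct bound carries an extra $C\eps^\alpha$; this is harmless since you recover exactly that term in the next line and your subsequent inequality $\E[|F^\eps(\QQ)|]\le C\eps^\alpha$ is the right one.
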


\begin{proof}
We consider a (not relabelled) subsequence of $\QQ^N$ going in law to some $\QQ$.
We adopt in this proof the convention that $K(0)=0$.
\vip

{\it Step 1.} Consider the identity maps $m : \rr \mapsto \rr$ and $\gamma: C([0,\infty),\rr^2) 
\mapsto C([0,\infty),\rr^2)$.
Using the classical theory of martingale problems, we realize that $g$ belongs to $\cS$ as soon as

(a) $g\circ (m,\gamma(0))^{-1}=g_0$;

(b) setting $g_t= g\circ (m,\gamma(t))^{-1}$, (\ref{tif}) holds true; 

(c) for all $0<t_1<\dots <t_k<s<t$, all $\psi \in C_b(\rr)$, all
$\varphi_1,\dots,\varphi_k \in C_b(\rr^2)$, all $\varphi \in C^2_b(\rr^2)$,
\begin{align*}
\cF(g):=&\int\!\!\int g(dm,d\gamma)g(d\tm,d\tg)\psi(m)\varphi_1(\gamma_{t_1})\dots\varphi_k(\gamma_{t_k})\\
& \hskip0.8cm \left[
\varphi(\gamma_t)-\varphi(\gamma_s)-\int_s^t \tm K(\gamma_u-\tg_u)\cdot \nabla \varphi(\gamma_u)du - 
\nu 
\int_s^t \Delta\varphi(\gamma_u)du\right]=0. 
\end{align*} 

Indeed, let $(\MM,(\XX(t))_{t\geq 0})$ be $g$-distributed. Then (a) implies that $(\MM,\XX(0))$ is
$g_0$-distributed and (b) says that the requirement (\ref{tif}) is fulfilled. Moreover, 
defining the vorticity $w_t(B):=\int_{\rr\times \rr^2} m \indiq_B(x)g_t(dm,dx)$ for all $B\in\cB(\rr)$, 
we see from to \eqref{tif} and (\ref{eq:nablaLqbdFisher2}) that $(w_t)_{t\geq 0}$ satisfies (\ref{cfish2}),
which implies (\ref{cfas}) by Lemma \ref{efe}.
Finally, point (c) tells us that for all $\varphi \in C^2_b(\rr^2)$,
$$
\varphi(\XX(t))-\varphi(\XX(0))-\intot \int \tm K(\XX(s)-\tg_s) \cdot \nabla\varphi(\XX(s))g(d\tm,d\tg)ds 
- \nu \intot \Delta\varphi(\XX(s))ds
$$
is a martingale. This classically implies the existence of a $2D$-Brownian motion
$(\BB(t))_{t\geq 0}$ such that 
$$
\XX(t)=\XX(0)+\intot \int \tm K(\XX(s)-\tg_s)g(d\tm,d\tg) ds + \sigma \BB(t).
$$
>From the definition of $w_t$, we see that  $\int \tm K(\XX(s)-\tg_s)g(d\tm,d\tg)$
is nothing but $\intrd K(\XX(s)-x)w_s(dx)$. Hence $(\XX(t))_{t\geq 0}$ solves (\ref{NSDE}) as desired.

\vip

We thus only have to prove that $\QQ$ a.s. satisfies points (a), (b) and (c). For each 
$t\geq 0$,
we set $\QQ_t= \QQ \circ (m,\gamma(t))^{-1}$.

\vip

{\it Step 2.} 
We know from (\ref{chaosini}) that the sequence $G^N_0$ is $g_0$-chaotic, which implies that
$\QQ^N_0= \QQ^N\circ (m,\gamma(0))^{-1}$ goes weakly to $g_0$ in law (and thus in probability
since $g_0$ is deterministic), whence
$\QQ_0=g_0$ a.s. Hence $\QQ$ satisfies (a).

\vip

{\it Step 3.} Point (b) follows from Theorem \ref{th:levl3tH&tI} and Proposition \ref{cutoff}.
Indeed, recall that $G^N_t$ is the law of $(\MM_i^N,\XX^N_i(t))_{i=1,\dots,N}$.
Since the $\MM_i^N$ are i.i.d. and $r_0$-distributed by assumption (\ref{chaosini})
and since the system is exchangeable, it holds that $G^N_t \in \Ee_n(r_0)$ for all $t\geq 0$
and $r_0$ is supported in $[-A,A]$ still by (\ref{chaosini}). Next, Proposition \ref{cutoff}
implies that  $\sup_{N\geq 2} \tMk(G^N_t)<\infty$, which is equivalent, by exchangeability, to 
$\sup_{N\geq 2} \tMk(G^N_{t1})<\infty$. Finally, we know that
$N^{-1}\sum_{i=1}^N\delta_{(\MM_i^N,\XX^N_i(t))}$ goes weakly to $\QQ_t$ in law (by hypothesis), 
which classically implies (see e.g. Sznitman \cite{SSF}) that for all $j\geq 1$, $G^N_{tj}$ goes weakly to 
$\pi_{tj}$, where $\pi_t:=\LL(\QQ_t)$ and where $\pi_{tj}=\int_{\PPP(\rr\times\rr^2)}g^{\otimes j}\pi_t(dg)$.
We thus may apply Theorem \ref{th:levl3tH&tI} (for each $t\geq 0$) and deduce that
$\int_{\PPP(\rr\times\rr^2)} \tI(g)\pi_t(dg) \leq \liminf_N \tI(G^N_t)$. By the Fatou Lemma and by definition of
$\pi_t$, this yields
$$
\E\left[\int_0^T \tI(\QQ_s)ds\right] = \int_0^T \int_{\PPP(\rr\times\rr^2)} \tH(g)\pi_t(dg) dt
\leq  \int_0^T \liminf_N \tI(G^N_s)dt \leq \liminf_N \int_0^T \tI(G^N_s)dt.
$$
This last quantity is finite by Proposition \ref{cutoff}, so that
$\int_0^T \tI(\QQ_s)ds<\infty$ a.s.

\vip

{\it Step 4.} From now on, we consider some fixed $\cF: \PPP(\R\times C([0,\infty),\rr^2))\mapsto \rr$ 
as in point (c). We will check that $\cF(\QQ)=0$ a.s.
and this will end the proof.

\vip

{\it Step 4.1.} Here we prove that for all $N\geq 2$,
\begin{equation}\label{scc1}
\E \left[ (\cF(\QQ^N))^2  \right] \leq \frac {C_\cF} N.
\end{equation}
To this end, we recall that $\varphi \in C^2_b(\rr^2)$ is fixed 
and we apply the It\^o formula to (\ref{ps}): for all $i=1,\dots,N$, 
(here we use the convention that $K(0)=0$)
\begin{align*}
O^N_i(t):=&\varphi(\XX^N_i(t))
-\frac 1N \sum_j \MM_j^N \intot \nabla\varphi(\XX^N_i(s)) \cdot
K(\XX^N_i(s)-\XX^N_j(s)) ds - \frac {\sigma^2}2 \intot \Delta \varphi(\XX^N_i(s))ds\\
=& \varphi(\XX^N_i(0)) + \sigma \intot \nabla \varphi(\XX^N_i(s))d\BB^i_s.
\end{align*}
But one easily get convinced that 
\begin{align*}
\cF(\QQ^N)=& \frac 1 N \sum_{i=1}^N \psi(\MM_i^N)\varphi_1(\XX^N_i(t_1))\dots \varphi_k(\XX^N_i(t_k)) 
[O^N_i(t)-O^N_i(s)]\\
=&\frac \sigma N \sum_{i=1}^N \psi(\MM_i^N)\varphi_1(\XX^N_i(t_1))\dots \varphi_k(\XX^N_i(t_k)) 
\int_s^t \nabla \varphi(\XX^N_i(u)) d\BB^i_u.
\end{align*}
Then (\ref{scc1}) follows from some classical stochastic calculus, using that $0<t_1<\dots<t_k<s<t$, that
$\psi,\varphi_1,\dots,\varphi_k,\nabla\varphi$ are bounded and that
the Brownian motions $\BB^1,\dots,\BB^N$ are independent.

\vip

{\it Step 4.2.} Next we introduce, for $\e\in (0,1)$, the smoothed kernel $K_\e:\rr^2\mapsto\rr²$ defined
by $K_\e(x)=K(x \max(|x|,\e)/|x|)$. This kernel is continuous, bounded, verifies $K_\e(x)=K(x)$
as soon as $|x|\geq \e$ and $|K_\e(x)|\leq |K(x)|=|x|^{-1}$. We also introduce
$\cF_\e$ defined as $\cF$ with $K$ replaced by $K_\e$. Then one easily checks that $g\mapsto \cF_\e(g)$
is continuous and bounded from $\PPP(\rr\times C([0,\infty),\rr^2))$ to $\rr$. Since
$\QQ^N$ goes in law to $\QQ$, we deduce that for any $\e\in(0,1)$,
$$
\E[|\cF_\e(\QQ)|]=\lim_{N} \E[|\cF_\e(\QQ^N)|].
$$

{\it Step 4.3.} We now prove that for all $N\geq 2$, all $\e\in(0,1)$,
$$
\E[|\cF(\QQ^N)-\cF_\e(\QQ^N)|] \leq C_\cF \sqrt \e.
$$
Using that all the functions (including the derivatives) involved in $\cF$ are bounded and that
$|K_\e(x)-K(x)|\leq |x|^{-1}\indiq_{\{0<|x|<\e\}}$, we get
\begin{align} \label{tas}
|\cF(g)-\cF_\e(g)| \leq& C_\cF \int \int  |\tm| \int_0^t |\gamma(u)-\tg(u)|^{-1}
\indiq_{\{0<|\gamma(u)-\tg(u)|<\e\}} du g(d\tm,d\tg) g(dm,d\gamma) \\
\leq & C_\cF \sqrt \e \int \int  |\tm| \int_0^t |\gamma(u)-\tg(u)|^{-3/2}
\indiq_{\{\gamma(u)\ne\tg(u)\}}du g(d\tm,d\tg) g(dm,d\gamma). \nonumber
\end{align}
Thus
$$
|\cF(\QQ^N)-\cF_\e(\QQ^N)| \leq C_\cF 
\frac {\sqrt \e} {N^2} \sum_{i\ne j} |\MM_j^N|\int_0^t |\XX^N_i(u)- \XX^N_j(u)|^{-3/2} du,
$$
whence by exchangeability (and since  $|\MM_j^N| \leq A$ a.s. for all $j$ by (\ref{chaosini})),
$$
\E[|\cF(\QQ^N)-\cF_\e(\QQ^N)|] \leq C_\cF {\sqrt \e} \intot \E[|\XX^N_1(u)- \XX^N_2(u)|^{-3/2}] du.
$$
Denoting by $G^N_{u2}$ the two-marginal of $G^N_u$ and using Lemma \ref{lem:FishInteg} with $\gamma=3/2$
and $\beta=1$, we get
$$
\E[|\cF(\QQ^N)-\cF_\e(\QQ^N)|] \leq C_\cF {\sqrt \e} \intot \tI(G^N_{u2})du.
$$
  We conclude using Proposition \ref{cutoff} and that $\tI(G^N_{u2})\leq 2 \tI(G^N_u)$,
see Corollary~\ref{cor:fishdec}.  

\vip
{\it Step 4.4.} We next check that a.s.,
$$
\lim_{\e \to 0} |\cF(\QQ)-\cF_\e(\QQ)| = 0.
$$
Since $\QQ$ is the limit in law of $\QQ^N$ by assumption and since $\supp \QQ^N \subset [-A,A]\times 
C([0,T],\rr^2)$ a.s. thanks to (\ref{chaosini}), we deduce that $\supp \QQ \subset [-A,A]\times 
C([0,T],\rr^2)$ a.s. Hence $\supp \QQ_s \subset [-A,A]\times \rr^2$ a.s. for each $s\geq 0$.
Denote by $v_s(dx):=\int_\rr \QQ_s(dm,dx)$,  we have from Step 3 and Lemma \ref{lem:FishInteg2} that 
$\nabla v \in L^{2q/(3q-2)}([0,T],L^q(\R^2))$ for all $q\in [1,2)$ a.s., whence 
$$
\intot \intrd \intrd |x-y|^{-3/2} v_s(dx)v_s(dy) ds <\infty
$$
a.s. by Lemma \ref{efe}. Using now (\ref{tas}), we deduce that
\begin{align*}
|\cF(\QQ)-\cF_\e(\QQ)| \leq & C_\cF A \sqrt \e \,  
\int \int  \int_0^t |x(s)-\tx(s)|^{-3/2}ds \QQ (d\tm,d\tx) \QQ (dm,dx)\\
=& C_\cF A \sqrt  \e \,  \int_0^t \intrd \intrd  |x-y|^{-3/2} v_s(dx)v_s(dy) ds.
\end{align*}
The conclusion follows.

\vip

{\it Step 4.5.} We finally conclude: for any $\e \in (0,1)$, we write, using Steps 4.1, 4.2 and 4.3,
\begin{align*}
\E[|\cF(\QQ)|\land 1] \leq & \E[|\cF_\e(\QQ)|] + \E[|\cF(\QQ)-\cF_\e(\QQ)|\land 1] \\
= & \lim_N \E[|\cF_\e(\QQ^N)|] + \E[|\cF(\QQ)-\cF_\e(\QQ)|\land 1] \\
\leq & \limsup_N \E[|\cF(\QQ^N)|] + \limsup_N \E[|\cF(\QQ^N)-\cF_\e(\QQ^N)|] 
+ \E[|\cF(\QQ)-\cF_\e(\QQ)|\land 1] \\
\leq & C_\cF \sqrt \e +  \E[|\cF(\QQ)-\cF_\e(\QQ)|\land 1].
\end{align*}
We now make tend $\e\to 0$ and use that $\lim_\e \E[|\cF(\QQ)-\cF_\e(\QQ)|\land 1]=0$
thanks to Step 4.4 by dominated convergence. Consequently, 
$\E[|\cF(\QQ)|\land 1]=0$, whence $\cF(\QQ)=0$ a.s. as desired.
\end{proof}

\section{Well-posedness for the limit equation and its stochastic paths}\label{sec:blm}
\setcounter{equation}{0}

We first give the

\vip

\begin{preuve} {\it of Theorem \ref{th:wp}.} 
First, existence follows from Proposition \ref{consist}. 
Let $w_0$ satisfy (\ref{condic}), introduce $g_0$ satisfying (\ref{condic2}) such that (\ref{lienwg})
holds true as in Remark \ref{remtc}-(ii) and finally consider set $G_0^N:=g_0^{\otimes N}$, which satisfies
(\ref{chaosini}). Then Proposition \ref{consist}
implies the existence (in law) of a solution to the nonlinear SDE \ref{NSDE} associated to $g_0$ and 
such that (\ref{tif}) holds true. 
Defining $(w_t)_{t\geq 0}$ by (\ref{def:vorticityOFg}), Remark \ref{nsdeins2d} 
implies that $(w_t)_{t\geq 0}$ is a weak solution starting from $w_0$
to (\ref{NS2D}). Furthermore, we have seen in the proof of Proposition \ref{consist}, Step 1,
that $(w_t)_{t\geq 0}$ satisfies \eqref{cfish2}.

\vip

We now turn to uniqueness and renormalization, which we prove in several steps. We consider a weak 
solution $(w_t)_{t\geq 0}$ of \eqref{NS2D} satisfying \eqref{cfish2} and we put 
$\bar K (t,x) := (K \star w_t)(x)$.

\vip

{\sl Step 1. First Estimates. }
Because of \eqref{cfish2}, we know that a.e. in time, $w_s$ is a measurable function, and 
thanks to the $\MMM([0,T],\rr^2)$-weak continuity assumption,
we deduce that
\beqn\label{eq:bddL1w}
w \in L^\infty (0,T; L^1(\R^2)) \quad \forall \, T > 0. 
\eeqn
Also observe that \eqref{cfish2} and \eqref{eq:bddL1w} imply, thanks to Lemma \ref{efe}, that
\beqn
\label{eq:wLqtLpx}
 w \in L^{p/(p-1)}(0,T;L^p(\rr^2)) \quad \forall \; p\in(1,\infty), 
\forall \; T>0,
\eeqn
By definition of $K$ and by the Hardy-Littlewood-Sobolev inequality 
(of which a particular case is $||\intrd |.-y|^{-1}f(y)dy||_{L^{2p/(2-p)}}
\leq C_p ||f||_{L^p}$ for all $p\in(1,2)$, 
see e.g. \cite[Theorem 4.3]{LL}), we thus get
\beqn
\label{eq:barKLqtLpx}
\bar K  \in L^{p/(p-1)}(0,T;L^{2p/(2-p)}(\rr^2)) \quad \forall \; p\in(1,2), 
\forall \; T>0.
\eeqn
Similarly, \eqref{cfish2} and the Hardy-Littlewood-Sobolev inequality imply that
\beqn\label{bdd:dbbarK} 
\nabla_x \bar K = K * (\nabla_x w) \in L^{p/(p-1)}(0,T;L^p(\rr^2)) \quad \forall \; p\in(2,\infty), \quad 
\forall \; T>0.
\eeqn

{\sl Step 2. Continuity. } 
Consider a mollifier sequence $(\rho_n)$ on $\rr^2$ and introduce the
mollified function $w^n_t := w_t * \rho_n$. Clearly, $w^n  \in C([0,\infty), L^1(\rr^2))$.
Using \eqref{eq:wLqtLpx} and  \eqref{bdd:dbbarK}, a variant of the commutation
Lemma \cite[Lemma II.1 and  Remark 4]{dPL} tells us that 
\beqn\label{eq:weps}
\partial_t w^n - \bar K \cdot \nabla_x w^n   - \nu\Delta_x w^n = r^n, 
\eeqn
with
$$
r^n := (  \bar K \cdot \nabla_x w) * \rho_n -  \bar K \cdot \nabla_x
w^n \to 0 \quad\hbox{in}\quad L^1(0,T; L^1_{loc}(\rr^2)).
$$
The important point here is that $|\nabla_x \bar K| \,|\omega|  \in L^1((0,T)
\times \rr^2)$, thanks to~\eqref{bdd:dbbarK} and~\eqref{eq:wLqtLpx}. Remark
that the singularity of the Biot-Savard kernel is sharp for that property : it
will no longer be true if we increase the singularity. It is the first time
that this happens, all we have done before remains valid for a singularity like
$|x|^{-\gamma}$ with $\gamma \in (0,2)$.

As a consequence,  the chain rule applied to the smooth $w^n$ reads 
\beqn\label{eq:betaweps}
\partial_t \beta(w^n) =  \bar K \cdot \nabla_x \beta(w^n) + 
 \nu\Delta_x \beta(w^n) -  \nu\beta''(w^n) \, |\nabla_x w^n|^2 +
\beta'(w^n) \, r^n,
\eeqn
for any $\beta \in C^1(\rr) \cap W^{2,\infty}_{loc}(\R)$ such that $\beta''$ is piecewise continuous and  
vanishes
outside of a compact set. Because the equation \eqref{eq:weps} with $\bar K$
fixed is linear, the difference 
$w^{n,k} := w^{n} - w^{k}$ satisfies \eqref{eq:weps} with $r^n$ replaced by $r^{n,k} := r^{n} - r^{k} \to 0$
in $L^1(0,T;L^1_{loc}(\R^2)$ and then also \eqref{eq:betaweps} (with again $w^n$ and $r^n$ changed in 
$w^{n,k}$ and $r^{n,k}$). In that last 
equation, we  choose $\beta(s) = \beta_1(s)$ where $\beta_M(s) = s^2/2$ for $|s| \le M$,  
$\beta_M(s) = M \, |s| - M^2/2$ for $|s| \ge M$ and we obtain 
for any non-negative $\chi \in C^2_c(\R^d)$,
\bean
\int_{\R^2} \beta_1(w^{n,k}(t,x)) \, \chi(x) \, dx  
&\le&  \int_{\R^2} \beta_1(w^{n,k}(0,x))\, \chi(x)  \, dx +  \int_{0}^{t} \!  \int_{\R^2}  |r^{n,k}(s,x)| \, 
\chi(x) \, dxds  
\\
&&+ \int_{0}^{t} \!  \int_{\R^2}  \beta_1 (w^{n,k}(s,x)) \, \Bigl(  \nu \Delta
\chi(x) - 
\bar K(s,x) \cdot \nabla \chi (x)\Bigr) \, dxds  
\eean
where we have used that $\hbox{div}_x \, \bar K = 0$, that $|\beta_1'|
\le 1$  and that $\beta_1'' \ge 0$. 
Because 
$w_0 \in L^1$, we have $w^{n,k}(0) \to 0$
in $L^1(\R^2)$, and we deduce from the previous inequality, the convergence $r^{n,k}  \to 0$ in  
$L^1(0,T; L^1_{loc}(\rr^2))$,
the convergence  $ \beta_1(w^{n,k}) \bar K  \to 0$ in $L^1(0,T;L^1_{loc}(\R^2))$
(because $\beta_1(s)\leq |s|$, because $w^{n,k}\to 0$ in $L^{3}(0,T,L^{3/2}(\rr^2))$ 
by \eqref{eq:wLqtLpx} with $p=3/2$ and since $\bar K \in L^{6}(0,T;L^3(\rr^2)) 
\subset L^{3/2}(0,T;L^{3}(\rr^2))$ by \eqref{eq:barKLqtLpx} with $p=6/5$), that
$$
\sup_{t \in [0,T]} \int_{\R^2} \beta_1(w^{n,k}(t,x)) \, \chi(x) \, dx  
\, \mathop{\longrightarrow}_{n,k \to \infty} \, 0.
$$
Since $\chi$ is arbitrary, we deduce that there exists $\bar w \in C([0,\infty); L^1_{loc}(\R^2))$ so that 
$w^n \to \bar w$ in $C([0,\infty); L^1_{loc}(\R^2))$, with the topology of
uniform convergence on any compact subset in time. Together with the
convergence $w^n \to w$ in 
$C([0,\infty);\MMM(\R^2))$ we deduce  
that $w=\bar w$ and with the same convention for the notion of convergence on
$[0,\infty)$
\beqn\label{eq:wcont}
w^n \to  w \quad \hbox{in} \quad C([0,\infty); L^1(\R^2)).
\eeqn

{\sl Step 3. Additional estimates. } 
We come back to \eqref{eq:betaweps}, which implies, for all $0<t_0<t_1$, all $\chi\in C^2_c(\rr^2)$,
\bear\label{eq:betawnbis}
\int_{\R^2} \beta(w^{n}_{t_1}) \, \chi  \, dx  +  \nu  \int_{t_0}^{t_1} \! 
\int_{\R^2} 
\beta''(w^n_s) \, |\nabla_x w^n_s|^2 \, \chi\, dxds
=  \int_{\R^2} \beta(w^{n}_{t_0}) \, \chi  \, dx \quad
\\ \nonumber
+ \int_{t_0}^{t_1} \!  \int_{\R^2} \Bigl\{\beta'(w^n _s) \, r^n \, \chi +  \beta (w^n_s) \,   
\nu \Delta \chi  - 
\beta (w^n_s) \,  \bar K  \cdot \nabla \chi \Bigr\} \, dxds.  
\eear
Choosing $0 \le \chi \in C^2_c(\R^2)$ and 
$\beta \in C^1(\rr) \cap W^{2,\infty}_{loc}(\R)$ such that $\beta''$ is non-negative and  vanishes
outside of a compact set,  and passing to the limit as $n\to\infty$ 
(see Step 2 for the details of a similar convergence), we get 
\bean
&&\int_{\R^2} \beta(w_{t_1}) \, \chi  \, dx  \le
  \int_{\R^2} \beta(w_{t_0}) \, \chi  \, dx
  +  \int_{t_0}^{t_1} \!  \int_{\R^2} \beta (w_s) \,  \Bigl\{   \nu \Delta \chi 
- 
 \bar K  \cdot \nabla  \chi \Bigr\} \, dxds.  
\eean
It is not hard to deduce, by approximating $\chi\equiv 1$ by a well-chosen sequence $\chi_R$, using that
$\int_{t_0}^{t_1} \intrd |w_s(x)| \indiq_{\{|x|\geq R\}} dx$ clearly tends
to $0$ as $R \to \infty$ and that $\beta$ is sublinear, that
\bean
\int_{\R^2} \beta(w_{t})   \, dx  \le
  \int_{\R^2} \beta(w_{t_0}) \, dx \quad \forall \, t \ge t_0 \ge 0. 
  \eean
Finally,  letting $\beta(s) \to |s|^p/p$ and then
$p \to \infty$,  we get
\beqn\label{eq:wLp}
\| w (t,.) \|_{L^p} \le \| w (t_0,.) \|_{L^p}, \quad \forall \, p \in [1,\infty], \,\, 
\forall \, t \ge t_0 \ge 0. 
\eeqn
Taking now $\beta = \beta_M$ in \eqref{eq:betawnbis}, we have
\bean
\int_{\R^2} \beta_M(w^{n}_{t_1}) \, \chi  \, dx  +  \nu  \int_{t_0}^{t_1} \! 
\int_{\R^2} 
\indiq_{\{|w^n_s|\leq M\}}|\nabla_x w^n_s|^2 \, \chi \, dxds
=  \int_{\R^2} \beta_M(w^{n}_{t_0}) \, \chi \, dx  
\quad\\ \nonumber
\qquad+ \int_{t_0}^{t_1} \!  \int_{\R^2} \Bigl\{ \beta_M'(w^n _s) \, r^n \, \, \chi +  \beta_M (w^n_s) \,   
\nu \Delta \, \chi  - 
\beta_M (w^n_s) \,  \bar K  \cdot \nabla \, \chi \Bigr\} \, dxds, 
\eean
Similarly as above we first make tend $n\to\infty$, then we 
approximate $\chi\equiv 1$ by a well-chosen sequence $\chi_R$ and make tend $R\to \infty$, and finally we
take the limit as $M\to \infty$: this yields
\beqn\label{eq:wL2}
\int_{\R^2} w^2_{t_1}  \, dx  +  \nu  \int_{t_0}^{t_1} \!  \int_{\R^2} |\nabla_x
w_s|^2   \, dxds
\le  \int_{\R^2} w^2_{t_0}    \, dx  \quad \forall \, t _1 \ge t_0 \ge 0. 
\eeqn
Using \eqref{eq:wLqtLpx}, \eqref{eq:wLp} and \eqref{eq:wL2}, we deduce that for all $0<t_0<T$,
\beqn\label{eq:uniqvortexregL2nabla}
\forall \, p \in [1,\infty), \;\; w \in L^\infty(t_0,T; L^p(\rr^2))
\quad \hbox{and}\quad 
\nabla_x w \in L^2((t_0,T) \times \rr^2).
\eeqn
It is easily checked, using the H\"older inequality, that $||\bar K_t||_{L^\infty}\leq
C(||w_t||_{L^1}+||w_t||_{L^3})$. Hence, $\bar K \in L^\infty(t_0,T; L^\infty(\rr^2))$.
We thus have
\beqn\label{eq:heatw}
\partial_t w + \Delta_x w = \bar K \cdot \nabla_x w \in  L^2((t_0,T) \times \rr^2),
\quad \forall \, t_0 >0
\eeqn
so that the maximal regularity of the heat equation in $L^2$-spaces (see Theorem X.11 stated in 
\cite{BrezisBook} and the quoted reference) provides the bound 
\beqn\label{eq:wMaximal}
w \in L^2(t_0,T; H^2(\R^2)) \cap L^\infty(t_0,T; H^1(\R^2)), \quad  \forall
\,t_0 >0.
\eeqn
We emphasize that starting form the bound $\bar K \cdot \nabla w \in L^2(
L^2((t_0,T) \times \rr^2)$ and when $w_{t_0} \in H^1$, the maximal regularity implies the above bound
on the time interval $[t_0,\infty)$. But thanks to~\eqref{eq:uniqvortexregL2nabla}, we can find $t_0$
arbitrarily close to $0$ such that $w_{t_0/2} \in H^1$, and this implies that
\eqref{eq:wMaximal} is correct for any $t_0>0$. 

Thanks to \eqref{eq:wMaximal}, an interpolation 
inequality and the Sobolev inequality, we deduce that
$\nabla_x w \in L^p ((t_0,T) \times \R^2)$ for any $1 < p < \infty$, whence $ \bar K \cdot \nabla_x
w \in L^p ((t_0,T) \times \R^2)$, for all $t_0 >0$. Then the maximal regularity
of the heat equation in $L^p$-spaces  
(see Theorem X.12 stated in \cite{BrezisBook} and the quoted references)  provides the bound 
\beqn\label{eq:wMaximalp}
\partial_t w, \nabla_x w  \in L^p((t_0,T) \times \R^2) , \quad \forall \, t_0 >0
\eeqn
and then the Morrey inequality implies $w \in C^{0,\alpha}((t_0,T) \times \R^2)$
for any $0< \alpha < 1$, and any $t_0 >0$. 
All together 
we conclude with  
$$
w  \in C([0,T) ; L^1(\rr^2)) \cap C((0,T) ; L^\infty(\rr^2)),
$$
which is nothing but \eqref{bdd:dbK3}.

\vip

{\sl Step 4. Uniqueness.} At this stage, we thus have shown that any weak solution to (\ref{NS2D}) 
satisfying (\ref{cfish2}) meets the assumptions
of \cite{Brezis} (which improves, thanks to very quick but smart arguments, the uniqueness 
result stated in \cite[Theorem B]{BenArtzi}). Such a solution is thus unique.

\vip

{\sl Step 5. Renormalization. } We end the proof by showing \eqref{eq:betaw}. 
Let thus $\beta \in C^1(\rr) \cap W^{2,\infty}_{loc}(\R)$ such that $\beta''$ is piecewise continuous and  
vanishes outside of a compact set.
Thanks to 
\eqref{eq:uniqvortexregL2nabla}, we can pass to the 
limit in the similar identity as \eqref{eq:betawnbis} obtained for time dependent test functions
$\chi \in C^2_c([0,\infty) \times \R^2)$ and we get 
\bear\label{eq:betawnbis2}
&& \nu  \int_{t_0}^\infty \!  \int_{\R^2} \beta''(w_s) \, |\nabla_x w_s|^2 \,
\chi\, dxds
=   \int_{\R^2} \beta(w_{t_0}) \, \chi  \, dx 
\\ \nonumber
&&\qquad+ \int_{t_0}^\infty \!  \int_{\R^2} \beta (w_s) \,   \Bigl\{  \nu \Delta
\chi  - 
 \bar K  \cdot \nabla \chi -  \partial_t \chi\Bigr\} \, dxds.  
\eear
When moreover $\chi \ge 0$ and $\beta'' \ge 0$, we can pass to the limit $t_0 \to 0$ thanks to 
monotonous convergence in the first term, 
the continuity property \eqref{eq:wcont} in the second term
and the Lebesgue dominated convergence 
theorem in the third term (recall that $\beta$ is sublinear and that $|w|(1+|\bar K|)$
belongs to $L^1(0,T;L^1(\rr²))$ because $w \in L^{3}(0,T;L^{3/2}(\R^2))$ by 
(\ref{eq:wLqtLpx}) with
$p=3/2$ and $\bar K \in L^{6}(0,T;L^{3}(\R^2)) \subset L^{3/2}(0,T;L^{3}(\R^2))$ by
(\ref{eq:barKLqtLpx}) with $p=6/5$) and we get
\bear\label{eq:betawnbis3}
&& \nu  \int_{0}^\infty \!  \int_{\R^2} \beta''(w_s) \, |\nabla_x w_s|^2 \,
\chi\, dxds
=   \int_{\R^2} \beta(w_0) \, \chi  \, dx 
\\ \nonumber
&&\qquad+ \int_{0}^\infty \!  \int_{\R^2} \beta (w_s) \,   \Bigl\{  \nu \Delta
\chi  - 
 \bar K  \cdot \nabla \chi -  \partial_t \chi\Bigr\} \, dxds.  
\eear
With the new bound on the first term provided by \eqref{eq:betawnbis3}, we can pass to the limit 
as $t_0 \to 0$ in \eqref{eq:betawnbis2}
and get \eqref{eq:betawnbis3}
for arbitrary test functions $\chi$ and renormalizing functions $\beta$ (i.e. without the assumptions
that $\chi$ and $\beta''$ are non-negative). This is nothing but 
\eqref{eq:betaw} in the distributional sense.
\end{preuve}

We now turn to the well-posedness of the nonlinear SDE \eqref{NSDE}.

\begin{preuve} {\it of Theorem \ref{wpnsde}.} Let thus $g_0$ satisfy \eqref{condic2}.
Here again, Proposition \ref{consist} (e.g. with the choice $G_0^N=g_0^{\otimes N}$) 
shows the existence (in law) of a solution to the nonlinear SDE \eqref{NSDE}
such that (\ref{tif}) holds true. Defining $(w_t)_{t\geq 0}$ by (\ref{def:vorticityOFg}), 
Remark \ref{nsdeins2d}  implies that $(w_t)_{t\geq 0}$ is a weak solution 
to (\ref{NS2D}). Furthermore, we have seen in the proof of Proposition \ref{consist}, Step 1,
that $(w_t)_{t\geq 0}$ satisfies \eqref{cfish2}. Hence $(w_t)_{t\geq 0}$ is uniquely determined by
Theorem~\ref{th:wp}. We will check below the pathwise uniqueness for the linear equation
\begin{align} \label{linpro}
\XX(t) = \XX(0)  + \int_0^t \bar K_s (\XX(s)) \,ds + \sigma \BB_t,
\end{align}
where $\bar K_s = K \ast w_s$. This will end the proof. Indeed, pathwise uniqueness for (\ref{NSDE}) will 
immediately follow (consider two solutions $\XX,\YY$ to \eqref{NSDE} associated to the same Brownian
motion $\BB$ and the same $(\MM,\XX(0))$, observe that both satisfy (\ref{linpro}) 
with the same Brownian motion, so that they coincide).
Now existence in law and pathwise uniqueness classically imply strong existence by the
Yamada-Watanabe theorem \cite{yw}. 

\vip

For the weak uniqueness to (\ref{linpro}), we might refer to \cite{Figalli} which assume
that $\bar K \in L^2_{t,x,loc}$. For the pathwise uniqueness to (\ref{linpro}), we might use 
\cite{LeBrisLions}, who assume that $\bar K \in L^1([0,T],W^{1,1}(\rr^2))$. But we shall give 
here an alternative proof for pathwise uniqueness,
which is well-suited to our initial (entropic) distribution.
We adapt to our context the method of \cite{CrippaDeLellis} concerning deterministic ODEs with low 
regularity vector-field.

\vip

We thus assume that we have two solutions $\XX$ and $\YY$ to \eqref{linpro} with the same Brownian motion
$\BB$, the same value of $(\MM,\XX(0))$ and the same vector-field $\bar K$. Then, obviously,
$$
\XX(t) - \YY(t)= \intot (\bar K_s (\XX(s))-\bar K_s(\YY(s))) \,ds,
$$
so that for any $\delta>0$,
$$
\log(\delta+ |\XX(t) - \YY(t)|)\leq \log \delta + \intot \frac{|\bar K_s (\XX(s))-\bar K_s(\YY(s))|}
{\delta + |\XX(s) - \YY(s)|} \,ds
$$
and thus
$$
 \E \left[ \log (\delta  + \sup_{0\le s\le t} |\XX(s) - \YY(s)|)\right] \leq  
\log \delta + \intot \E \left[
\frac{|\bar K_s(\XX(s)) - \bar K_s(\YY(s))|}{\delta  + |\XX(s) - \YY(s)|}  \right] 
\,ds.
$$

We will use the following facts: for a measurable function $f$ on $\rr^2$, define the Hardy-Littlewood maximal function
$M f(x)= \sup_{r >0} |B_r(x)|^{-1} \int_{B_r(x)} |f(y)|dy$, where
$B_r(x)$ is the ball centered at $x$ with radius $r$. Then, for a.e. $x,y \in \rr^2$, see \cite[Corollary 4.3 with $\alpha=0$]{Aalto}
\begin{eqnarray} \label{ineq1}
|f(x) - f(y) | \le C \left[ M\nabla f(x) + M\nabla f(y)\right] |x - y|
\end{eqnarray}
and for all $p\in [1,\infty]$, see \cite[Theorem 1 in Chapter 1]{Stein},
\begin{eqnarray}\label{ineq2}
\| M f  \|_{L^p} \le C_{p} \| f \|_{p}, \qquad \forall p\in [1,\infty].
\end{eqnarray}
Using (\ref{ineq1}), we obtain
$$
 \E \left[ \log (\delta  + \sup_{0\le s\le t} |\XX(s) - \YY(s)|)\right] \leq 
\log \delta + 
C \intot \E \left[
|M \nabla_x \bar K_s(\XX(s)) + M \nabla_x  \bar K_s(\YY(s))|  \right]  \,ds.
$$
Denoting now by $v^1$ (resp. $v^2$) the law of $\XX(t)$ (resp. $\YY(t)$), we remember that (\ref{tif})
(which is assumed for both solutions)
and Lemma \ref{lem:FishInteg2} imply that for $i=1,2$
\begin{equation}\label{ineq3}
\forall p \in [1,+\infty), \;\; v^i_t  \in L^{p/(p-1)}([0,T], L^p(\rr^2)).
\end{equation}
Using (\ref{ineq3}) with $p=3/2$, (\ref{ineq2}) 
and the estimate \eqref{bdd:dbbarK} with $p=3$,
\begin{eqnarray*}
\intot \E \left[ M \nabla_x \bar K_s(\XX(s)) \right] \,ds &=& \intot \intrd M\nabla_x \bar K_s (x) v^1_s(x)dx\\
&\le& \int_0^t \| M \nabla_x \bar K_s \|_{L^3} \| v^1_s\|_{L^{3/2}} \,ds  \\
&\leq & C \int_0^t \| \nabla_x \bar K_s \|_{L^3} \| v^1_s\|_{L^{3/2}} \,ds  \\
& \le &  C \|\nabla_x \bar K  \|_{L^{3/2}([0,t],L^{3}(\R^2))} \|v^1 \|_{L^{3}([0,t],L^{3/2}(\R^2))} < \infty.
\end{eqnarray*}
Handling the same computation for $\YY$, we get that
$$
\E \left[ \log (\delta  + \sup_{0\le s\le t} |\XX(s) - \YY(s)|)\right] \leq \log \delta
+ C_t,
$$
where the constant $C_t$ is independent of $\delta$. From that and the fact that 
$u \mapsto \log u$ is increasing, setting $\ZZ_t := \sup_{0 \le s \le t} |\XX(s) - \YY(s)|$, we can
estimate for any $\eps >0$
\bean
\Prob (\ZZ_t > \eps) \, \log (1 + \eps \, \delta^{-1}) + \log \delta
&=& \Prob (\ZZ_t \le \eps) \, \log \delta + \Prob (\ZZ_t > \eps) \, \log (\delta + \eps)
\\
&=& \E \Bigl( {\bf 1}_{ \{\ZZ_t \le \eps\} } \, \log \delta +{\bf 1}_{ \{\ZZ_t >\eps \}}  
\, \log (\delta + \eps) \Bigr) 
\\
&\le& \E \Bigl(  \log (\delta + \ZZ_t )  \Bigr) \\
&\le&  \log \delta + C_t.
\eean
We have proved  
$$
\Prob(\sup_{0\le s\le t} |\XX(s) - \YY(s)| \ge \eps) \le \frac{C_t}{\log(1 + \eps
\delta^{-1})}.
$$
Letting $\delta \to 0$, we obtain $ \Prob(\sup_{0\le s\le t} |\XX(s) - \YY(s)|
\ge \eps)=0$.
Pathwise uniqueness is proved.

\vip

It remains to prove \eqref{eq:EntropyEq}. 
We denote by $g_t\in\PPP(\rr\times\rr^2)$  the law of $(\MM,\XX(t))$
and by $w_t\in \MMM(\rr^2)$ the associated vorticity, see (\ref{def:vorticityOFg}). 
Since $(\MM,(\XX(t))_{t\geq 0})$ has been obtained by passing to the limit in the particle system
\eqref{ps}, we deduce from Theorem \ref{th:mr}, Theorem \ref{th:levl3tH&tI} and Lemma \ref{cutoff}
that 
\beqn\label{eq:bdfm}
\sup_{[0,T]}\tH(g_t) < \infty, \quad \sup_{t \in [0,T]} \tMk(g_t) < \infty, \quad \int_0^T \tI(g_s) \, ds < \infty.
\eeqn
We call $r_0\in\PPP(\rr)$ the law of $\MM$ and for $m\in\rr$, we denote by $f^m_t$ 
the law of $\XX(t)$ knowing that $\MM=m$. We then have $g_t(dm,dx)=r_0(dm)f^m_t(dx)$ for all $t\geq 0$.
Thanks to It\^o calculus, $(f^m_t)_{t\geq 0}$ clearly belongs to $C([0,T]; \PPP(\R^2))$ 
(because $t\mapsto \XX(t)$ is a.s. continuous) and is a weak solution, for $m\in \rr$ fixed, to 
\beqn\label{eq:PDEfm}
\partial_t f^m = \nu \Delta_x f^m + \bar K \cdot \nabla_x f^m
\eeqn
where $\bar K_t = K \star w_t$. Using the definitions of $\tH,\tMk,\tI$, we deduce from 
\eqref{eq:bdfm} that for $r_0$-almost every $m\in\R$, for all $t\geq 0$,
\beqn\label{eq:bdfm2}
H(f^m_t) < \infty, \quad M_k(f^m_t) < \infty, \quad \int_0^t I(f^m_s) \, ds < \infty.
\eeqn
The Fisher information bound in \eqref{eq:bdfm2}  implies, by Lemma \ref{lem:FishInteg1}, that 
$$
 \nabla_x f^m   \in L^{2q/(3q-2)}(0,T, L^q(\rr^2))\quad  \forall \; q\in [1,2),\quad
\forall \; T>0.
$$ 
Then we use the same arguments as in the proof of (\ref{eq:betaw}) in 
Theorem \ref{th:wp} (which was entirely based on such an estimate plus an estimate saying that 
$(f^m_t)_{t\geq 0}$ belongs to $C([0,T]; \PPP(\R^2))$):
for any $t > 0$, any $\beta \in C^1(\rr) \cap W^{2,\infty}_{loc}(\R)$ such that $\beta''$ is 
piecewise continuous and  vanishes
outside of a compact set and any $\chi \in C^2_c(\rr^2)$,
\begin{eqnarray*}
&&\int_{\R^2} \beta(f^m_t) \chi \, dx 
+ \nu  \int_{0}^t \!  \int_{\R^2} \beta''(f^m_s) \, |\nabla f^m_s|^2  \chi\, 
dxds\\
&=&   \int_{\R^2} \beta(f^m_0) \chi\, dx    + \int_0^t\intrd \beta(f^m_s)
[\nu \Delta \chi - \bar K_s . \nabla \chi  ]dxds.
\end{eqnarray*}
Assume now additionally that $\beta''\geq 0$ and that $\beta(0)=0$.
Considering an
increasing sequence of uniformly bounded 
non-negative functions $\chi_k\in C^2_c(\rr^2)$ so that $\chi_k(x)=1$ for $|x|\leq k$,
it is not hard to deduce that (use the monotonous convergence theorem for the second term, 
the dominated convergence theorem and
that $|\beta(f^m_t)|+|\beta(f^m_0)| \leq C (f^m_t+f^m_0) \in L^1(\rr^2)$ 
for the first and third terms and finally, for the last term, the dominated convergence theorem and the fact
that $|\beta(f^m)|(1+|\bar K|) \in L^1([0,T]\times \rr^2)$ because $\bar K \in L^6(0,T;L^{3}(\rr^2))
\subset L^{3/2}(0,T;L^3(\rr^2))$ by \eqref{eq:barKLqtLpx} with $p=6/5$ and because  
$f^m \in L^3(0,T;L^{3/2}(\rr^2))$ thanks to the Fisher information estimate in (\ref{eq:bdfm2}) and 
Lemma \ref{lem:FishInteg2} with $p=3/2$),
\begin{eqnarray*}
\int_{\R^2} \beta(f^m_t) \, dx 
+ \nu  \int_{0}^t \!  \int_{\R^2} \beta''(f^m_s) \, |\nabla f^m_s|^2  dxds
=   \int_{\R^2} \beta(f^m_0) \, dx.
\end{eqnarray*}
We apply this with $\beta_p: \R_+ \to \R$ defined by
$\beta_p''(s) := (1/s) \, {\bf 1}_{\{s\in[1/p,p]\}}$, $\beta_p(0) = \beta_p(1) = 0$ and let $p\to \infty$.
The second term tends to $\nu \int_0^t I(f^m_s)ds$ 
as $p\to\infty$ by monotonous convergence. The first and third terms tend to $H(f^m_t)$ and $H(f^m_0)$
by monotonous convergence, 
because $0\leq -\beta_p(s)\indiq_{s\in [0,1]}$ increases to $-s \log s \indiq_{s\in [0,1]}$
while $0\leq \beta_p(s)\indiq_{s\in [1,\infty)}$ increases to $s \log s \indiq_{s\in [1,\infty)}$.
In fact, it can be checked that $\beta_p(s) = s \ln s +(1-s)/p$ if $s \in [ 1/p,p]$.
We finally get
$$
H(f^m_t) + \nu  \int_{0}^t  I(f^m_s) \, ds = H(f^m_0) .
$$
Integrating this equality against $r_0(dm)$ leads us to (\ref{eq:EntropyEq}).
\end{preuve}

\section{Conclusion}\label{sec:concl}
\setcounter{equation}{0}

It only remains to put together all the intermediate results.

\begin{preuve} {\it of Theorem \ref{th:mr}.}
Let us consider, for each $N\geq 2$, a family $(\MM_i^N,\XX_i^N(0))_{i=1,\dots,N}$
of $\rr\times\rr^2$-valued random variables. Assume that \eqref{chaosini} holds true for some $g_0$.
For each $N\geq 2$, consider the unique solution $(\XX^N_i(t))_{i=1,\dots,N,t\geq 0}$ to (\ref{ps}) and define
$\QQ^N := \frac1N \sum_{i=1}^N \delta_{(\MM_i^N,(\XX^N_i(t))_{t\geq 0})}$.
As shown in Lemma \ref{tight}, the family $\{\LLL(\QQ^N),N\geq 2\}$ is tight in 
$\PPP(\PPP(\rr\times C([0,\infty),\rr^2)))$. 
Proposition \ref{consist} shows that any (random) limit point $\QQ$
of this sequence belongs a.s. to $\cS$, the set of all probability measures 
$g\in\PPP(\rr\times C([0,\infty),\rr^2))$ such that $g$ is the law of
$(\MM,(\XX(t))_{t\geq 0})$ with $(\XX(t))_{t\geq 0}$ solution to the nonlinear SDE (\ref{NSDE}) 
satisfying that, denoting by $g_t\in
\PPP(\rr\times \rr^2)$ the law of $(\MM,\XX(t))$, (\ref{tif}) holds true. 
But Theorem \ref{wpnsde} implies that $\cS$ is reduced to one point $\cS=\{g\}$.
All this implies that $\QQ^N$ tends in law to $g$ as $N\to \infty$:
the sequence $(\MM_i^N,(\XX^N_i(t))_{t\geq 0})$ is $(\MM,(\XX(t))_{t\geq 0})$ -chaotic.

\vip
The last point follows thanks to the fact that all the circulations are
bounded by $A$:
we know that $\QQ^N$ goes in probability to $g$, in $\PPP(\R\times C([0,\infty),\R^2))$.
We also know that $\WW^N = \Phi(\QQ^N)$ and $w= \Phi(g)$, where 
$\Phi: \PPP(\R\times C([0,\infty),\R^2)) \mapsto C([0,\infty),\MMM(\R^2))$ is defined by
$(\phi(q))_t(B)= \int_{\R\times C([0,\infty),\R^2)} m \indiq_{\{\gamma(t)\in B\}} q(dm,d\gamma)$ for all $B\in\cB(\R^2)$.
A slightly tedious but straightforward study shows that this map is continuous on the subset
of all $q\in \PPP(\R\times C([0,\infty),\R^2))$ such that supp $q \subset [-A,A]\times C([0,\infty),\R^2)$.
The conclusion follows, since both $\QQ^N$ and $g$ a.s. belong to this subset by (\ref{chaosini}).
\end{preuve}

Finally, we give the proof of Theorem~\ref{th:mr2} on entropy chaos and strong convergence by adapting a 
trick introduced in \cite{MMKacProg} for the Boltzmann equation.

\begin{preuve}  {\it of Theorem \ref{th:mr2}.}
Recall that $G^N_t$ stands for the law of $(\MM_i^N,\XX_i^N(t))_{i=1,\dots,N}$, that $g_t$ is the law
of $(\MM,\XX(t))$, that we assume (\ref{chaosini}) and additionally that
$\lim_N \tH(G^N_0)=\tH(g_0)$. 

\vip

{\it Point (i). }
It readily follows from Theorem \ref{th:mr} that for each $t\geq 0$, 
$G^N_t$ is $g_t$-chaotic (in the sense of Kac) so that in particular, $(\MM_1^N,\XX_1^N(t))$ 
goes in law to $g_t$. It remains to prove that $\lim_N \tH(G^N_t)=\tH(g_t)$.
We first recall that from \eqref{entropGN} and the 
remark at the beginning of the proof of Proposition~\ref{cutoff}
$$
\forall \, t \ge 0, \qquad \tH(G^N_t) + \nu\int_0^t \tI(G^N_s) \, ds \le  
\tH(G^N_0),
$$
whence
$$
\limsup_N \{\tH(G^N_t) + \nu\int_0^t \tI(G^N_s) \, ds \} \le \limsup_N
\tH(G^N_0) = \tH(g_0).
$$
On the other hand,
applying Theorem~\ref{th:levl3tH&tI} (see Step 3 of the proof of Proposition \ref{consist} 
for similar considerations), we get 
$$
\liminf_N \tH(G^N_t) \ge \tH(g_t),
\quad
\liminf_N  \int_0^t \tI(G^N_s) \, ds \ge \int_0^t \tI(g_s) \, ds.
$$
Using that $\tH(g_t) +  \nu \int_0^t \tI(g_s) \, ds=\tH(g_0)$
by \eqref{eq:EntropyEq}, we easily conclude that for all $t\geq 0$,
$$
\lim_N \tH(G^N_t)= \tH(g_t) , \quad
\lim_N \int_0^t \tI(G^N_s) \, ds = \int_0^t \tI(g_s) \, ds
$$
as desired.

\vip
 
{\it Point (ii).} Denote by $r_0$ the law of $\MM$, recall that $r_0$ is supported in $\AA=[-A,A]$ and 
that the $\MM_i^N$ are i.i.d. and $r_0$-distributed.
For $j= 1,\dots,N$, we denote by $G^N_{tj}$ the $j$-th marginal of $G^N_t$ (that is, the
law of $(\MM_i^N,\XX_i^N(t))_{i=1,\dots, j}$), and by $F^{N,M}_{tj}$ the law of $(\XX_i^N(t))_{i=1,\dots, j}$
knowing that $(\MM_i^N)_{i=1,\dots,j}=M$ for any given $M\in\AA^j$. Then we have the disintegration formula
$G^N_{tj}(dM,dX)=r_0^{\otimes j}(dM)F^{N,M}_{tj}(dX)$. We also disintegrate $g_t(dm,dx)=r_0(dm)f^m_t(dx)$.

\vip

Using first Corollary \ref{cor:fishdec} (since $G^N_{tj}\to g_t^{\otimes j}$ weakly as $N\to\infty$ 
because $G^N_t$ is $g_t$-chaotic and since $\sup_N \tMk(G^N_t)<\infty$) and then that
$\lim_N \tH(G^N_t)= \tH(g_t)$ by Step 1, we have, for any $j\geq 1$, 
$$
\tH(g_t^{\otimes j}) \le \liminf_N  \tH(G^N_{t}) 
\limsup_N  \tH(G^N_{t})  = \tH(g_t)=\tH(g_t^{\otimes j}),
$$
so that,  for any $j \ge 1$, $\tH(G^N_{tj})  \to \tH(g_t^{\otimes j})$. 

Introducing artificially 
$$Q^N_{tj}(dM,dX)=r_0^{\otimes j}(dM) \left(\frac12  F^{N,M}_{tj}(dX) + \frac 12 
\prod_{i=1}^j f^{m_i}_{t}(dx_i)\right)= \frac 12 G^{N}_{tj}(dM,dX)+ \frac 12 g_t^{\otimes j}(dM,dX)
$$ 
(here we use the notation $X=(x_1,\dots,x_j)$ and $M=(m_1,\dots,m_j)$), it obviously holds that 
$Q^N_{tj}$ goes weakly to $g_t^{\otimes j}$ so that by lower semi-continuity, 
$\liminf_N \tH(Q^N_{tj}) \geq \tH(g_t^{\otimes j})$. We deduce that
$\limsup_N \left[\frac12 \tH(G^{N}_{tj}) + \frac12 \tH(g_t^{\otimes j}) - \tH(Q^N_{tj})  \right]  \leq 0$,
whence, by convexity of $\tH$,
$$
\limsup_N \left[\frac12 \tH(G^{N}_{tj}) + \frac12 \tH(g_t^{\otimes j}) - \tH(Q^N_{tj})  \right]= 0.
$$
Using the disintegration formulae and the definition of $\tH$, this rewrites
$$
\lim_N \int_{\R^j} r_0^{\otimes j} (dM) \left\{  \frac 12 H \bigl( F^{N,M}_{tj} \bigr) +  
\frac 12 H \Bigl( \prod_{i=1}^j f^{m_i}_{t} \Bigr) -  H \Bigl(  \frac 12 F^{N,M}_{tj} + \frac 12 
\prod_{i=1}^N f^{m_i}_{t} \Bigr)  \right\}=0.
$$
By the strict convexity of $s \mapsto s \, \log s$, this classically implies,
see for instance \cite{BrezisConvex} (all this can be rewritten as the integral against 
$r_0^{\otimes j} (dM)dX$ of a non-negative function), that from any (not relabelled) 
subsequence we can extract a (not relabelled) such that
\beqn\label{eq:cvgceaeGNj}
F^{N,M}_{tj}(X)  \to \prod_{i=1}^j f^{m_i}_{t}(x_i) \quad\hbox{for} \quad 
r_0^{\otimes j}\hbox{-a.e.}\,  M \in \R^j, \ 
\hbox{Lebesgue-a.e.} \, X \in (\R^2)^j.
\eeqn
On the other hand, the estimate established in Proposition~\ref{cutoff} together with Lemma~\ref{ieth} and Corollary~\ref{cor:fishdec} 
imply that $C_k + \tMk(G^N_{tj})+ \tH(G^N_{tj}) \leq 2 \, (C_k + \tMk(G^N_{t})+ \tH(G^N_{t}) ) 
\leq C$, which rewrites
$$
\forall \, N \ge 1 \qquad \int_{(\AA \times \R^2)^j}(\langle X \rangle^k + \log F^{N,M}_{tj}(X)) \, 
F^{N,M}_{tj}(X) \, r_0^{\otimes j}(dM) dX \le C.
$$
The Dunford-Pettis theorem thus implies that
\beqn\label{eq:equivL1GNj}
F^{N,M}_{tj}(X)\quad\hbox{is weakly compact in}\quad L^1((\AA \times \R^2)^j; r_0^{\otimes j}(dM) dX).
\eeqn
It is then a well-known application of the Egorov theorem that 
\eqref{eq:cvgceaeGNj} and \eqref{eq:equivL1GNj} imply that
$$
F^{N,M}_{tj}(X) \to  \prod_{i=1}^j f^{m_i}_{t}(x_i) 
\quad\hbox{strongly  in}\quad L^1((\AA \times \R^2)^j; r_0^{\otimes j} (dM) dX).
$$
We immediately deduce that
$w^N_{tj}(X)=\int_{R^j} m_1\dots m_j F^{N,M}_{tj}(X)r_0^{\otimes j}(dM)$ goes strongly in $L^1((\R^2)^j)$
to $w_t^{\otimes j}(X)= \int_{R^j} m_1\dots m_j (\prod_{i=1}^j f^{m_i}_{t}(x_i)) r_0^{\otimes j}(dM)$,
since $\AA$ is compact.
\end{preuve}

\end{document}